\pgfplotsset{compat=1.17}
\titleformat{\chapter}[display]   
{\normalfont\huge\bfseries}{\chaptertitlename\ \thechapter}{20pt}{\Huge}   
\titlespacing*{\chapter}{0pt}{-30pt}{40pt}
\theoremstyle{plain}
\newtheorem{theorem}{Theorem}[section] 
\newmdtheoremenv{bigtheo}{Theorem}
\newtheorem{corollary}{Corollary}[section]
\newtheorem{lemma}[theorem]{Lemma}
\newtheorem{prop}[section]{Proposition}
\newtheorem*{remark}{Remark}
\newtheorem{definition}{Definition}[section] 
\newcommand{\N}{\ensuremath{\mathbb{N}}}
\newcommand{\R}{\ensuremath{\mathbb{R}}}
\newcommand{\Z}{\ensuremath{\mathbb{Z}}}
\newcommand{\Q}{\ensuremath{\mathbb{Q}}}
\newcommand{\PP}{\ensuremath{\mathbb{P}}}
\newcommand{\indep}{\perp \!\!\! \perp}
\title{Part III Essay}
\begin{document}

\title{

{\includegraphics[width=0.3\textwidth]{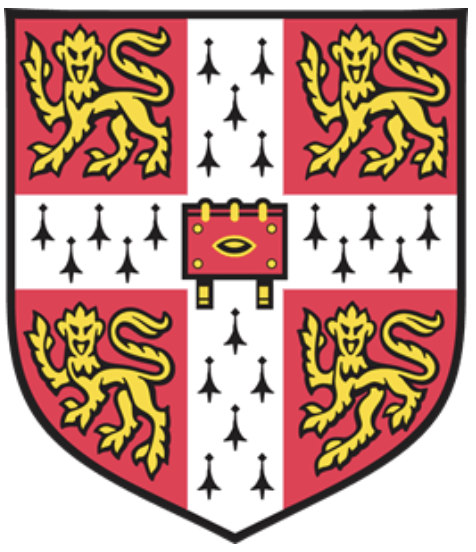}}\\
{The KPZ Fixed Point and the Directed Landscape}\\
{Part III Essay}\\
{Pantelis Tassopoulos}
}

\date{May 2024}

\maketitle

\tableofcontents

\chapter*{Overview}

\indent The term \textquotedblleft KPZ\textquotedblright stands for the initials of three physicists, namely Kardar, Parisi and Zhang, which, in 1986 conjectured the existence of universal scaling behaviours for many random growth processes in the plane \cite{kardar1986dynamic}.\\

A process is said to belong to the KPZ universality class if one can associate to it an appropriate \textquotedblleft height function\textquotedblright and show that its 3:2:1 (time : space : fluctuation) scaling limit, see \ref{eq: KPZ scaling}, converges to a universal random process, the \textbf{KPZ fixed point}. Alternatively membership is loosely characterised by having: 1. Local dynamics; 2.
A smoothing mechanism; 3. Slope dependent growth rate (lateral growth); 4. Space-time random forcing with rapid decay of correlations \cite{matetski2021kpz}, see section \ref{sec: non-gaussian}.\\

The central object that we will study is the so-called KPZ fixed point, which belongs to the KPZ universality class. Many strides have been made in the last couple of decades in this field, with constructions of the KPZ fixed point from certain processes such as the totally asymmetric simple exclusion process (with arbitrary initial condition) \cite{matetski2021kpz} and Brownian last passage percolation \cite{nica2020one}.\\

Crucial in the study of the KPZ fixed point, is the Airy Line Ensemble, a collection of random non-intersecting continuous curves satisfying a certain Brownian resampling property, namely, the \textit{Brownian Gibbs Property}. Its locally Brownian nature, to be made precise later and the properties of the geometry of last passage percolation over it, led to the construction of the Directed Landscape and Airy Sheet, in the breakthrough paper of Dauvergne, Ortmann and Virag (DOV) \cite{DOV}. These objects lend themselves to description by a variational formula, which lies at the crux of the result of by Sarkar and Virag \cite{sarkar2021brownian}, showing the absolute continuity of the KPZ fixed point on compacts with respect to Brownian motion. \\

In this essay, we will: 1. delineate the origins of KPZ universality; 2. describe and motivate canonical models; 3. give an overview of recent developments, especially those in the DOV paper; 4. present the strategy of and key points in the proof of the aforementioned absolute continuity result of \cite{sarkar2021brownian}; 5. conclude with remarks for future directions. The presentation is such that the content is displayed in an as much self-contained manner as possible, aimed at a motivated audience having mastered the fundamentals of the theory of probability.

\subsection*{Acknowledgements}
\noindent I would like to express my gratitude to Dr. Sourav Sarkar for not only proposing this topic but also for his guidance and constructive feedback, making all this possible. 

\chapter{Introduction}

\section{Gaussian Universality}

The central theme underlying this essay is that of universality. Universality in random systems has been a driving factor in research guiding developments in Probability and Mathematical Physics. The reader will be familiar with the classical example of the Gaussian universality class with its most well-known manifestation being the central limit theorem. There are more subtle manifestations of this universality class in the behaviour of the interface of random growth process, in particular, in the random deposition model which we briefly describe next.\\

In the \textbf{random deposition model}, blocks of unit side length are sequentially deposited independently and at random into columns of unit length where the vertices of the blocks take values in \(\Z\) and the separation in time of consecutive blocks falling in each column is distributed according to an exponential distribution of rate \(\lambda >0\). Allowing this model evolve over time leads to a picture as in the left diagram in figure \ref{Random deposition}. Note due to the memorylessness property of the exponential distribution, the future evolution of the system is determined by its present state. Also, independence of the location of block depositions implies independence of the growth of individual columns. One can define the height function \(h:\R_{+}\times\R \mapsto \R\), where \(h(t,x)\) records the number of blocks that have landed above site \(x\), before and including time \(t\).\\

We now analyse the asymptotic scaling properties of the height function in time. For simplicity, set \(\lambda = 1\). Define the random waiting times \(T_{x,i}\) as the time that elapses for the \(i\)-th block to land in column \(x\). We have the equivalent characterisations
\[
h(t,x)<n \quad \iff \quad \displaystyle \Sigma_{i=1}^nT_{x,i}>t\,.
\]

\noindent The times \(T_{x,i}\) are i.i.d. and more generally one observes that
\[
h(t,x) = \sup \{k\in \N:  \displaystyle \Sigma_{i=1}^kT_{x,i}\leq t\}\,.
\]
\noindent Furthermore, the law of large numbers and the central limit theorem imply

\[
\displaystyle \lim_{t\to\infty}\frac{h(t,x)}{t} = 1, \quad \text{and} \quad \displaystyle \lim_{t\to\infty}\frac{h(t,x)-t}{t^{1/2}} \stackrel{d}{\rightarrow} \mathcal{N}(0,1)\,,
\]

\noindent where the second convergence is in distribution, to a standard normal random variable, for each site \(x\in \R\) \cite{corwin2016kardar}. For a demonstration of the former, observe that 
\[
\displaystyle\Sigma_{i=1}^{h(t,x)}T_{x,i}\leq t \quad \text{and} \quad \displaystyle \Sigma_{i=1}^{h(t,x)+1}T_{x,i}> t
\]
and so \[
\displaystyle\frac{\Sigma_{i=1}^{h(t,x)}T_{x,i}}{h(t,x)}\leq \frac{t}{h(t,x)} <\displaystyle \displaystyle\frac{\Sigma_{i=1}^{h(t,x)+1}T_{x,i}}{h(t,x)}.\]
By independence of the arrival times and their common distribution, an application of the Borel-Cantelli lemma yields that for all \(x\in\R\) almost surely, as \(t\to\infty\), \(h(t,x)\to \infty\). Thus, noting that \((h(t,x)+1)/h(t,x)\to 1, t \to \infty\) and applying the strong law of large numbers, gives the conclusion \cite{mitov2014renewal}. Similar arguments yield the convergence in law. Since fluctuations of \(h(t,x)\) about its mean are of order \(t^{1/2}\), the above model is said to be in the Gaussian universality class. Notice that the limiting scaling behaviour of \(h(t,x)\) does not depend on the particular distribution of the arrival times. This however, changes drastically when one alters the growth rules of the model.\\

\vspace{0.1in}
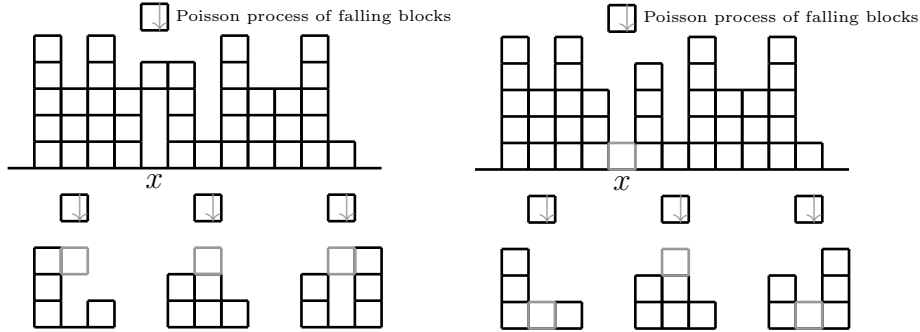
\begin{figure}[H]
    \centering
\begin{picture}(120,100)(-75,-55)
\linethickness{1pt}
\put(0,0){\line(1,0){140}}

\put(10,0){\line(0,1){10}}
\put(20,0){\line(0,1){10}}
\put(10,10){\line(1,0){10}}

\put(10,10){\line(0,1){10}}
\put(20,10){\line(0,1){10}}
\put(10,20){\line(1,0){10}}

\put(10,20){\line(0,1){10}}
\put(20,20){\line(0,1){10}}
\put(10,30){\line(1,0){10}}

\put(10,30){\line(0,1){10}}
\put(20,30){\line(0,1){10}}
\put(10,40){\line(1,0){10}}

\put(10,40){\line(0,1){10}}
\put(20,40){\line(0,1){10}}
\put(10,50){\line(1,0){10}}

\put(20,0){\line(0,1){10}}
\put(30,0){\line(0,1){10}}
\put(20,10){\line(1,0){10}}

\put(20,10){\line(0,1){10}}
\put(30,10){\line(0,1){10}}
\put(20,20){\line(1,0){10}}

\put(20,20){\line(0,1){10}}
\put(30,20){\line(0,1){10}}
\put(20,30){\line(1,0){10}}

\put(30,0){\line(0,1){10}}
\put(40,0){\line(0,1){10}}
\put(30,10){\line(1,0){10}}

\put(30,10){\line(0,1){10}}
\put(40,10){\line(0,1){10}}
\put(30,20){\line(1,0){10}}

\put(30,20){\line(0,1){10}}
\put(40,20){\line(0,1){10}}
\put(30,30){\line(1,0){10}}

\put(30,30){\line(0,1){10}}
\put(40,30){\line(0,1){10}}
\put(30,40){\line(1,0){10}}

\put(30,40){\line(0,1){10}}
\put(40,40){\line(0,1){10}}
\put(30,50){\line(1,0){10}}

\put(40,0){\line(0,1){10}}
\put(50,0){\line(0,1){10}}
\put(40,10){\line(1,0){10}}

\put(40,10){\line(0,1){10}}
\put(50,10){\line(0,1){10}}
\put(40,20){\line(1,0){10}}

\put(40,20){\line(0,1){10}}
\put(50,20){\line(0,1){10}}
\put(40,30){\line(1,0){10}}

\put(50,52){\line(0,1){10}}
\put(60,52){\line(0,1){10}}
\put(50,62){\line(1,0){10}}
\put(50,52){\line(1,0){10}}
\color{black!40}
\put(54,57){\makebox(0,0)[l]{$\downarrow$}}
\color{black}
\put(63,57){\makebox(0,0)[l]{\tiny{Poisson process of falling blocks}}}
\color{black!40}
\put(50,0){\line(0,1){10}}
\put(60,0){\line(0,1){10}}
\put(50,10){\line(1,0){10}}
\put(50,0){\line(1,0){10}}
\color{black!40}

\put(60,0){\line(0,1){10}}
\color{black}
\put(70,0){\line(0,1){10}}
\put(60,10){\line(1,0){10}}

\put(60,10){\line(0,1){10}}
\put(70,10){\line(0,1){10}}
\put(60,20){\line(1,0){10}}

\put(60,20){\line(0,1){10}}
\put(70,20){\line(0,1){10}}
\put(60,30){\line(1,0){10}}

\put(60,30){\line(0,1){10}}
\put(70,30){\line(0,1){10}}
\put(60,40){\line(1,0){10}}

\put(70,0){\line(0,1){10}}
\put(80,0){\line(0,1){10}}
\put(70,10){\line(1,0){10}}

\put(80,0){\line(0,1){10}}
\put(90,0){\line(0,1){10}}
\put(80,10){\line(1,0){10}}

\put(80,10){\line(0,1){10}}
\put(90,10){\line(0,1){10}}
\put(80,20){\line(1,0){10}}

\put(80,20){\line(0,1){10}}
\put(90,20){\line(0,1){10}}
\put(80,30){\line(1,0){10}}

\put(80,30){\line(0,1){10}}
\put(90,30){\line(0,1){10}}
\put(80,40){\line(1,0){10}}

\put(80,40){\line(0,1){10}}
\put(90,40){\line(0,1){10}}
\put(80,50){\line(1,0){10}}

\put(90,0){\line(0,1){10}}
\put(100,0){\line(0,1){10}}
\put(90,10){\line(1,0){10}}

\put(90,10){\line(0,1){10}}
\put(100,10){\line(0,1){10}}
\put(90,20){\line(1,0){10}}

\put(90,20){\line(0,1){10}}
\put(100,20){\line(0,1){10}}
\put(90,30){\line(1,0){10}}

\put(100,0){\line(0,1){10}}
\put(110,0){\line(0,1){10}}
\put(100,10){\line(1,0){10}}

\put(100,10){\line(0,1){10}}
\put(110,10){\line(0,1){10}}
\put(100,20){\line(1,0){10}}

\put(100,20){\line(0,1){10}}
\put(110,20){\line(0,1){10}}
\put(100,30){\line(1,0){10}}

\put(110,0){\line(0,1){10}}
\put(120,0){\line(0,1){10}}
\put(110,10){\line(1,0){10}}

\put(110,10){\line(0,1){10}}
\put(120,10){\line(0,1){10}}
\put(110,20){\line(1,0){10}}

\put(110,20){\line(0,1){10}}
\put(120,20){\line(0,1){10}}
\put(110,30){\line(1,0){10}}

\put(110,30){\line(0,1){10}}
\put(120,30){\line(0,1){10}}
\put(110,40){\line(1,0){10}}

\put(110,40){\line(0,1){10}}
\put(120,40){\line(0,1){10}}
\put(110,50){\line(1,0){10}}

\put(120,0){\line(0,1){10}}
\put(130,0){\line(0,1){10}}
\put(120,10){\line(1,0){10}}
\put(55,-5){\makebox(0,0){$x$}}
%


\put(10,-60){\line(1,0){30}}

\put(10,-60){\line(0,1){10}}
\put(20,-60){\line(0,1){10}}
\put(10,-50){\line(1,0){10}}

\put(10,-50){\line(0,1){10}}
\put(20,-50){\line(0,1){10}}
\put(10,-40){\line(1,0){10}}

\put(10,-40){\line(0,1){10}}
\put(20,-40){\line(0,1){10}}
\put(10,-30){\line(1,0){10}}

\put(30,-60){\line(0,1){10}}
\put(40,-60){\line(0,1){10}}
\put(30,-50){\line(1,0){10}}

\put(20,-20){\line(0,1){10}}
\put(30,-20){\line(0,1){10}}
\put(20,-10){\line(1,0){10}}
\put(20,-20){\line(1,0){10}}
\color{black!40}
\put(24,-15){\makebox(0,0)[l]{$\downarrow$}}

\put(20,-60){\line(0,1){10}}
\put(30,-60){\line(0,1){10}}
\put(20,-50){\line(1,0){10}}
\put(20,-60){\line(1,0){10}}

\color{black}


\put(60,-60){\line(1,0){30}}

\put(60,-60){\line(0,1){10}}
\put(70,-60){\line(0,1){10}}
\put(60,-50){\line(1,0){10}}

\put(60,-50){\line(0,1){10}}
\put(70,-50){\line(0,1){10}}
\put(60,-40){\line(1,0){10}}

\put(70,-60){\line(0,1){10}}
\put(80,-60){\line(0,1){10}}
\put(70,-50){\line(1,0){10}}

\put(70,-50){\line(0,1){10}}
\put(80,-50){\line(0,1){10}}
\put(70,-40){\line(1,0){10}}

\put(80,-60){\line(0,1){10}}
\put(90,-60){\line(0,1){10}}
\put(80,-50){\line(1,0){10}}

\put(70,-20){\line(0,1){10}}
\put(80,-20){\line(0,1){10}}
\put(70,-10){\line(1,0){10}}
\put(70,-20){\line(1,0){10}}
\color{black!40}
\put(74,-15){\makebox(0,0)[l]{$\downarrow$}}

\put(70,-40){\line(0,1){10}}
\put(80,-40){\line(0,1){10}}
\put(70,-30){\line(1,0){10}}
\put(70,-40){\line(1,0){10}}

\color{black}


\put(110,-60){\line(1,0){30}}

\put(110,-60){\line(0,1){10}}
\put(120,-60){\line(0,1){10}}
\put(110,-50){\line(1,0){10}}

\put(110,-50){\line(0,1){10}}
\put(120,-50){\line(0,1){10}}
\put(110,-40){\line(1,0){10}}

\put(130,-60){\line(0,1){10}}
\put(140,-60){\line(0,1){10}}
\put(130,-50){\line(1,0){10}}

\put(130,-50){\line(0,1){10}}
\put(140,-50){\line(0,1){10}}
\put(130,-40){\line(1,0){10}}

\put(130,-40){\line(0,1){10}}
\put(140,-40){\line(0,1){10}}
\put(130,-30){\line(1,0){10}}

\put(120,-20){\line(0,1){10}}
\put(130,-20){\line(0,1){10}}
\put(120,-10){\line(1,0){10}}
\put(120,-20){\line(1,0){10}}
\color{black!40}
\put(124,-15){\makebox(0,0)[l]{$\downarrow$}}

\put(120,-60){\line(0,1){10}}
\put(130,-60){\line(0,1){10}}
\put(120,-50){\line(1,0){10}}
\put(120,-60){\line(1,0){10}}

\end{picture}\label{Random deposition}

    \caption{Left: Ballistic deposition model. Growth occurs when blocks stick to first point of contact (denoted in \color{black!40}gray\color{black}). Right: Random deposition model, growth occurs when a falling block hits a pre-existing block from above (denoted in \color{black!40}gray\color{black})\protect\footnotemark.}
    \label{fig:enter-label}
    \begin{picture}(120,100)(100,-210)

\linethickness{1pt}
\put(0,0){\line(1,0){140}}

\put(10,0){\line(0,1){10}}
\put(20,0){\line(0,1){10}}
\put(10,10){\line(1,0){10}}

\put(10,10){\line(0,1){10}}
\put(20,10){\line(0,1){10}}
\put(10,20){\line(1,0){10}}

\put(10,20){\line(0,1){10}}
\put(20,20){\line(0,1){10}}
\put(10,30){\line(1,0){10}}

\put(10,30){\line(0,1){10}}
\put(20,30){\line(0,1){10}}
\put(10,40){\line(1,0){10}}

\put(10,40){\line(0,1){10}}
\put(20,40){\line(0,1){10}}
\put(10,50){\line(1,0){10}}

\put(20,0){\line(0,1){10}}
\put(30,0){\line(0,1){10}}
\put(20,10){\line(1,0){10}}

\put(20,10){\line(0,1){10}}
\put(30,10){\line(0,1){10}}
\put(20,20){\line(1,0){10}}

\put(20,20){\line(0,1){10}}
\put(30,20){\line(0,1){10}}
\put(20,30){\line(1,0){10}}

\put(30,0){\line(0,1){10}}
\put(40,0){\line(0,1){10}}
\put(30,10){\line(1,0){10}}

\put(30,10){\line(0,1){10}}
\put(40,10){\line(0,1){10}}
\put(30,20){\line(1,0){10}}

\put(30,20){\line(0,1){10}}
\put(40,20){\line(0,1){10}}
\put(30,30){\line(1,0){10}}

\put(30,30){\line(0,1){10}}
\put(40,30){\line(0,1){10}}
\put(30,40){\line(1,0){10}}

\put(30,40){\line(0,1){10}}
\put(40,40){\line(0,1){10}}
\put(30,50){\line(1,0){10}}

\put(40,0){\line(0,1){10}}
\put(50,0){\line(0,1){10}}
\put(40,10){\line(1,0){10}}

\put(40,10){\line(0,1){10}}
\put(50,10){\line(0,1){10}}
\put(40,20){\line(1,0){10}}

\put(40,20){\line(0,1){10}}
\put(50,20){\line(0,1){10}}
\put(40,30){\line(1,0){10}}

\put(50,52){\line(0,1){10}}
\put(60,52){\line(0,1){10}}
\put(50,62){\line(1,0){10}}
\put(50,52){\line(1,0){10}}

\color{black!40}
\put(54,57){\makebox(0,0)[l]{$\downarrow$}}
\color{black}

\put(63,57){\makebox(0,0)[l]{\tiny{Poisson process of falling blocks}}}
\put(50,30){\line(0,1){10}}
\put(60,30){\line(0,1){10}}
\put(50,40){\line(1,0){10}}
\put(50,30){\line(1,0){10}}
\put(60,0){\line(0,1){10}}
\put(70,0){\line(0,1){10}}
\put(60,10){\line(1,0){10}}

\put(60,10){\line(0,1){10}}
\put(70,10){\line(0,1){10}}
\put(60,20){\line(1,0){10}}

\put(60,20){\line(0,1){10}}
\put(70,20){\line(0,1){10}}
\put(60,30){\line(1,0){10}}

\put(60,30){\line(0,1){10}}
\put(70,30){\line(0,1){10}}
\put(60,40){\line(1,0){10}}

\put(70,0){\line(0,1){10}}
\put(80,0){\line(0,1){10}}
\put(70,10){\line(1,0){10}}

\put(80,0){\line(0,1){10}}
\put(90,0){\line(0,1){10}}
\put(80,10){\line(1,0){10}}

\put(80,10){\line(0,1){10}}
\put(90,10){\line(0,1){10}}
\put(80,20){\line(1,0){10}}

\put(80,20){\line(0,1){10}}
\put(90,20){\line(0,1){10}}
\put(80,30){\line(1,0){10}}

\put(80,30){\line(0,1){10}}
\put(90,30){\line(0,1){10}}
\put(80,40){\line(1,0){10}}

\put(80,40){\line(0,1){10}}
\put(90,40){\line(0,1){10}}
\put(80,50){\line(1,0){10}}

\put(90,0){\line(0,1){10}}
\put(100,0){\line(0,1){10}}
\put(90,10){\line(1,0){10}}

\put(90,10){\line(0,1){10}}
\put(100,10){\line(0,1){10}}
\put(90,20){\line(1,0){10}}

\put(90,20){\line(0,1){10}}
\put(100,20){\line(0,1){10}}
\put(90,30){\line(1,0){10}}

\put(100,0){\line(0,1){10}}
\put(110,0){\line(0,1){10}}
\put(100,10){\line(1,0){10}}

\put(100,10){\line(0,1){10}}
\put(110,10){\line(0,1){10}}
\put(100,20){\line(1,0){10}}

\put(100,20){\line(0,1){10}}
\put(110,20){\line(0,1){10}}
\put(100,30){\line(1,0){10}}

\put(110,0){\line(0,1){10}}
\put(120,0){\line(0,1){10}}
\put(110,10){\line(1,0){10}}

\put(110,10){\line(0,1){10}}
\put(120,10){\line(0,1){10}}
\put(110,20){\line(1,0){10}}

\put(110,20){\line(0,1){10}}
\put(120,20){\line(0,1){10}}
\put(110,30){\line(1,0){10}}

\put(110,30){\line(0,1){10}}
\put(120,30){\line(0,1){10}}
\put(110,40){\line(1,0){10}}

\put(110,40){\line(0,1){10}}
\put(120,40){\line(0,1){10}}
\put(110,50){\line(1,0){10}}

\put(120,0){\line(0,1){10}}
\put(130,0){\line(0,1){10}}
\put(120,10){\line(1,0){10}}
\put(55,-5){\makebox(0,0){$x$}}
%


\put(10,-60){\line(1,0){30}}

\put(10,-60){\line(0,1){10}}
\put(20,-60){\line(0,1){10}}
\put(10,-50){\line(1,0){10}}

\put(10,-50){\line(0,1){10}}
\put(20,-50){\line(0,1){10}}
\put(10,-40){\line(1,0){10}}

\put(10,-40){\line(0,1){10}}
\put(20,-40){\line(0,1){10}}
\put(10,-30){\line(1,0){10}}

\put(30,-60){\line(0,1){10}}
\put(40,-60){\line(0,1){10}}
\put(30,-50){\line(1,0){10}}

\put(20,-20){\line(0,1){10}}
\put(30,-20){\line(0,1){10}}
\put(20,-10){\line(1,0){10}}
\put(20,-20){\line(1,0){10}}
\color{black!40}
\put(24,-15){\makebox(0,0)[l]{$\downarrow$}}

\put(20,-40){\line(0,1){10}}
\put(30,-40){\line(0,1){10}}
\put(20,-30){\line(1,0){10}}
\put(20,-40){\line(1,0){10}}

\color{black}


\put(60,-60){\line(1,0){30}}

\put(60,-60){\line(0,1){10}}
\put(70,-60){\line(0,1){10}}
\put(60,-50){\line(1,0){10}}

\put(60,-50){\line(0,1){10}}
\put(70,-50){\line(0,1){10}}
\put(60,-40){\line(1,0){10}}

\put(70,-60){\line(0,1){10}}
\put(80,-60){\line(0,1){10}}
\put(70,-50){\line(1,0){10}}

\put(70,-50){\line(0,1){10}}
\put(80,-50){\line(0,1){10}}
\put(70,-40){\line(1,0){10}}

\put(80,-60){\line(0,1){10}}
\put(90,-60){\line(0,1){10}}
\put(80,-50){\line(1,0){10}}

\put(70,-20){\line(0,1){10}}
\put(80,-20){\line(0,1){10}}
\put(70,-10){\line(1,0){10}}
\put(70,-20){\line(1,0){10}}
\color{black!40}
\put(74,-15){\makebox(0,0)[l]{$\downarrow$}}

\put(70,-40){\line(0,1){10}}
\put(80,-40){\line(0,1){10}}
\put(70,-30){\line(1,0){10}}
\put(70,-40){\line(1,0){10}}

\color{black}


\put(110,-60){\line(1,0){30}}

\put(110,-60){\line(0,1){10}}
\put(120,-60){\line(0,1){10}}
\put(110,-50){\line(1,0){10}}

\put(110,-50){\line(0,1){10}}
\put(120,-50){\line(0,1){10}}
\put(110,-40){\line(1,0){10}}

\put(130,-60){\line(0,1){10}}
\put(140,-60){\line(0,1){10}}
\put(130,-50){\line(1,0){10}}

\put(130,-50){\line(0,1){10}}
\put(140,-50){\line(0,1){10}}
\put(130,-40){\line(1,0){10}}

\put(130,-40){\line(0,1){10}}
\put(140,-40){\line(0,1){10}}
\put(130,-30){\line(1,0){10}}

\put(120,-20){\line(0,1){10}}
\put(130,-20){\line(0,1){10}}
\put(120,-10){\line(1,0){10}}
\put(120,-20){\line(1,0){10}}
\color{black!40}
\put(124,-15){\makebox(0,0)[l]{$\downarrow$}}

\put(120,-40){\line(0,1){10}}
\put(130,-40){\line(0,1){10}}
\put(120,-30){\line(1,0){10}}
\put(120,-40){\line(1,0){10}}

\color{black}

\end{picture}\label{ballistic deposition}
\end{figure}
\footnotetext{Inspired by figure $0b$ in \cite{corwin2011kardarparisizhangreview}.}
\vspace{-4cm}

\section{Beyond Gaussian Universality}\label{sec: non-gaussian}

Introducing the modification that a falling block sticks to the side of the first block it becomes incident to, leads to the \textbf{ballistic deposition} model, as depicted in figure \ref{Random deposition}. Spatial correlation is expected, and indeed, in simulations (see figure $3$ in \cite{corwin2016kardar}), when compared over large epochs, the height function for random versus ballistic deposition have qualitative differences; the latter has a fractal nature with many overhangs created by the \textquotedblleft sticky\textquotedblright blocks. One also observes heuristically, from the simulations, that the fluctuations in the latter model are smaller than in random deposition, conjectured to grow at a rate of \(t^{1/3}\). Though the above conjecture and others regarding the exact scaling behaviour in large time scales remain unproven, the model exhibits the following three properties, expected to be manifested by any process that is in the KPZ universality class, namely, 

\begin{itemize}
    \item \textbf{Locality}: Height function at a column depends only on a neighbourhoud of the column
    \item \textbf{Smoothing}: Large valleys are quickly filled
    \item \textbf{Non-linear slope dependence}: Vertical growth rate depends non-linearly on the local slope of the height function
    \item \textbf{Space-time independent noise}: Growth is driven by noise that decorelates quickly in space/time and is not heavy tailed.
\end{itemize}

However, slightly changing the deposition geometry leads to an integrable probabilistic system, namely, the \textbf{corner growth model}, which was shown to be in the KPZ universality class\footnote{By \textquotedblleft  integrable \textquotedblright, one usually refers to the case where explicit formulas describing the dynamics of the system can be derived.}. The growth interface (\color{blue} blue \color{black} in figure \ref{fig:Corner growth}) is given by a piecewise-linear function where growth occurs at each local minimum (\(\lor\)) or local maximum (\(\land\)) after an exponentially distributed waiting time (independently from each other other local min/max). With initial data shaped like a \(45^{\circ}\) wedge, or more precisely, \(h(0,x) = |x|\), as in figure \ref{fig:Corner growth}.

\vspace{0.2in}

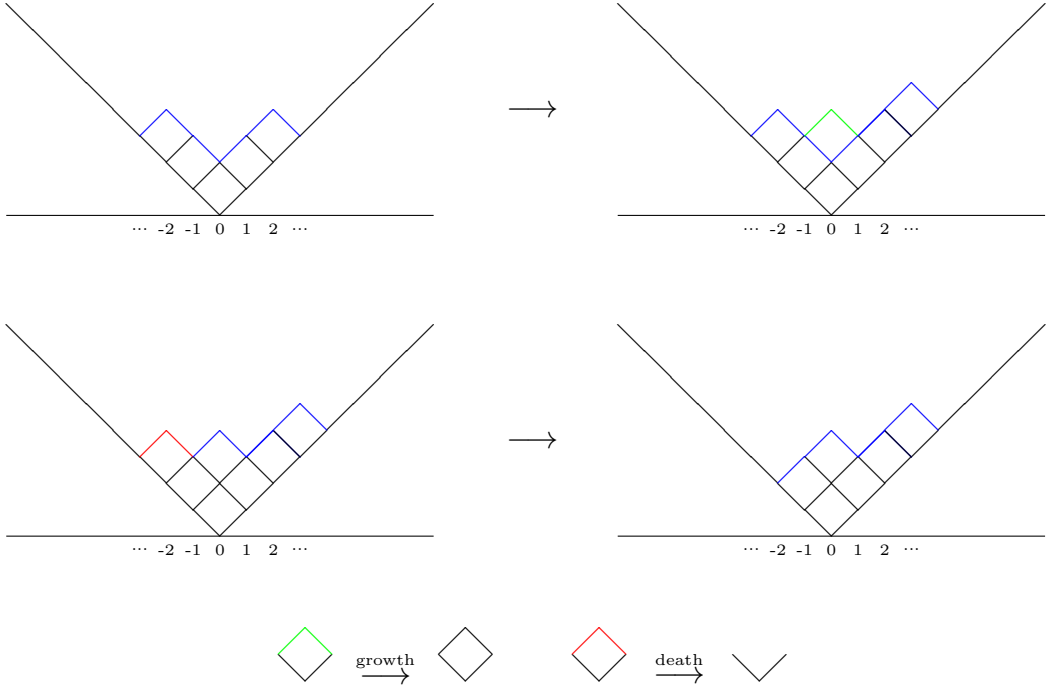
\begin{figure}[H]
\centering
\setlength{\unitlength}{1pt}
\begin{picture}(200,120)(-75, -60)\label{corner}
\put(0,0){\line(1,1){80}}
\put(0,0){\line(-1,1){80}}
\put(-10,10){\line(1,1){10}}
\put(10,10){\line(-1,1){10}}
\put(-20,20){\line(1,1){10}}
\color{blue}
\put(0,20){\line(-1,1){10}}
\put(-30,30){\line(1,1){10}}
\put(-10,30){\line(-1,1){10}}
\put(0,20){\line(1,1){10}}
\color{black}
\put(20,20){\line(-1,1){10}}

\color{blue}
\put(10,30){\line(1,1){10}}
\put(30,30){\line(-1,1){10}}
\color{black}

\put(-80,0){\line(1,0){160}}
\put(0,-5){\makebox(0,0){$\textrm{\tiny{0}}$}}
\put(-10,-5){\makebox(0,0){$\textrm{\tiny{-1}}$}}
\put(-20,-5){\makebox(0,0){$\textrm{\tiny{-2}}$}}
\put(-30,-5){\makebox(0,0){$\textrm{\tiny{...}}$}}
\put(10,-5){\makebox(0,0){$\textrm{\tiny{1}}$}}
\put(20,-5){\makebox(0,0){$\textrm{\tiny{2}}$}}
\put(30,-5){\makebox(0,0){$\textrm{\tiny{...}}$}}

\end{picture}
\begin{picture}(170,120)(-100, -60)\label{corner}
\put(0,0){\line(1,1){80}}
\put(0,0){\line(-1,1){80}}
\put(-10,10){\line(1,1){10}}
\put(10,10){\line(-1,1){10}}
\put(-20,20){\line(1,1){10}}
\color{blue}
\put(0,20){\line(-1,1){10}}
\put(-30,30){\line(1,1){10}}
\put(-10,30){\line(-1,1){10}}
\put(0,20){\line(1,1){10}}
\color{black}
\put(20,20){\line(-1,1){10}}

\color{blue}
\put(10,30){\line(1,1){10}}
\put(30,30){\line(-1,1){10}}
\color{black}

\color{green}
\put(-10,30){\line(1,1){10}}
\put(10,30){\line(-1,1){10}}
\color{black}

\put(30,30){\line(-1,1){10}}
\color{blue}
\put(10,30){\line(1,1){10}}
\put(20,40){\line(1,1){10}}
\put(40,40){\line(-1,1){10}}
\color{black}

\put(-80,0){\line(1,0){160}}
\put(0,-5){\makebox(0,0){$\textrm{\tiny{0}}$}}
\put(-10,-5){\makebox(0,0){$\textrm{\tiny{-1}}$}}
\put(-20,-5){\makebox(0,0){$\textrm{\tiny{-2}}$}}
\put(-30,-5){\makebox(0,0){$\textrm{\tiny{...}}$}}
\put(10,-5){\makebox(0,0){$\textrm{\tiny{1}}$}}
\put(20,-5){\makebox(0,0){$\textrm{\tiny{2}}$}}
\put(30,-5){\makebox(0,0){$\textrm{\tiny{...}}$}}

\end{picture}
\begin{picture}(200,120)(-75, -60)\label{corner}

\put(0,0){\line(1,1){80}}
\put(0,0){\line(-1,1){80}}
\put(-10,10){\line(1,1){10}}
\put(10,10){\line(-1,1){10}}
\put(-20,20){\line(1,1){10}}
\put(0,20){\line(-1,1){10}}
\color{red}
\put(-30,30){\line(1,1){10}}
\put(-10,30){\line(-1,1){10}}
\color{black}
\put(0,20){\line(1,1){10}}
\color{black}
\put(20,20){\line(-1,1){10}}

\color{blue}
\put(10,30){\line(1,1){10}}
\put(30,30){\line(-1,1){10}}
\put(-10,30){\line(1,1){10}}
\put(10,30){\line(-1,1){10}}
\color{black}

\put(30,30){\line(-1,1){10}}
\color{blue}
\put(10,30){\line(1,1){10}}
\put(20,40){\line(1,1){10}}
\put(40,40){\line(-1,1){10}}
\color{black}

\put(-80,0){\line(1,0){160}}
\put(0,-5){\makebox(0,0){$\textrm{\tiny{0}}$}}
\put(-10,-5){\makebox(0,0){$\textrm{\tiny{-1}}$}}
\put(-20,-5){\makebox(0,0){$\textrm{\tiny{-2}}$}}
\put(-30,-5){\makebox(0,0){$\textrm{\tiny{...}}$}}
\put(10,-5){\makebox(0,0){$\textrm{\tiny{1}}$}}
\put(20,-5){\makebox(0,0){$\textrm{\tiny{2}}$}}
\put(30,-5){\makebox(0,0){$\textrm{\tiny{...}}$}}

\end{picture}
\begin{picture}(170,120)(-100, -60)\label{corner}
\put(0,0){\line(1,1){80}}
\put(0,0){\line(-1,1){80}}
\put(-10,10){\line(1,1){10}}
\put(10,10){\line(-1,1){10}}
\color{blue}
\put(-20,20){\line(1,1){10}}
\color{black}
\put(0,20){\line(-1,1){10}}
\put(0,20){\line(1,1){10}}
\color{black}
\put(20,20){\line(-1,1){10}}

\color{blue}
\put(10,30){\line(1,1){10}}
\put(30,30){\line(-1,1){10}}
\put(-10,30){\line(1,1){10}}
\put(10,30){\line(-1,1){10}}
\color{black}

\put(30,30){\line(-1,1){10}}
\color{blue}
\put(10,30){\line(1,1){10}}
\put(20,40){\line(1,1){10}}
\put(40,40){\line(-1,1){10}}
\color{black}

\put(-80,0){\line(1,0){160}}
\put(0,-5){\makebox(0,0){$\textrm{\tiny{0}}$}}
\put(-10,-5){\makebox(0,0){$\textrm{\tiny{-1}}$}}
\put(-20,-5){\makebox(0,0){$\textrm{\tiny{-2}}$}}
\put(-30,-5){\makebox(0,0){$\textrm{\tiny{...}}$}}
\put(10,-5){\makebox(0,0){$\textrm{\tiny{1}}$}}
\put(20,-5){\makebox(0,0){$\textrm{\tiny{2}}$}}
\put(30,-5){\makebox(0,0){$\textrm{\tiny{...}}$}}

\end{picture}
\centering
\begin{picture}(200,0)(-100, -60)
\put(-80,-40){\line(1,1){10}}
\put(-80,-40){\line(-1,1){10}}
\color{green}
\put(-90,-30){\line(1,1){10}}
\put(-70,-30){\line(-1,1){10}}
\color{black}

\put(-20,-40){\line(1,1){10}}
\put(-20,-40){\line(-1,1){10}}
\put(-30,-30){\line(1,1){10}}
\put(-10,-30){\line(-1,1){10}}

\put(30,-40){\line(1,1){10}}
\put(30,-40){\line(-1,1){10}}
\color{red}
\put(20,-30){\line(1,1){10}}
\put(40,-30){\line(-1,1){10}}
\color{black}

\put(-50,-35){\makebox(0,0){$\stackrel{\textrm{\tiny{growth}}}{\longrightarrow}$}}
\put(60,-35){\makebox(0,0){$\stackrel{\textrm{\tiny{death}}}{\longrightarrow}$}}
\put(5,50){\makebox(0,0){$\longrightarrow$}}
\put(5,175){\makebox(0,0){$\longrightarrow$}}

\put(90,-40){\line(1,1){10}}
\put(90,-40){\line(-1,1){10}}
\end{picture}

\caption{The corner growth model with equal growth (in \color{green}green\color{black}) rate and death (in \color{red}red\color{black}) rate\protect\footnotemark.}\label{fig:Corner growth}
\setlength{\unitlength}{1.3pt}
\end{figure}
\footnotetext{Inspired by figure $0a$ in \cite{corwin2011kardarparisizhangreview}.}

In 1980, \cite{rost1981non} proved the following law of large numbers for the corner growth model, where the scaling is of the same scale in height, time and space. Note that the limiting profile obtained is parabolic. 

\begin{theorem}\label{thm: Wedge initial data}
    For wedge initial data, 
    \[
    \displaystyle \lim_{M\to\infty}\frac{h(tM,xM)}{M} = \mathfrak{h}(t,x)\coloneqq 
    \begin{cases} 
        t\frac{1-(x/t)^2}{2} & |x|<t\\
        |x| & |x|\geq t 
    \end{cases}
    \]
\end{theorem}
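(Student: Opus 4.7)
The natural route is to translate the corner growth model into the exactly-solvable exclusion-process picture. The slope increments $\sigma_x(t) = h(t,x+1)-h(t,x) \in \{-1,+1\}$ encode a particle configuration on $\Z$; wedge initial data becomes the step configuration (particles on one side of the origin, holes on the other), and each growth event at a local minimum is an adjacent particle--hole swap, i.e.\ a TASEP jump. Labelling the unit cells lying above the initial wedge by pairs $(i,j)$ with $i,j\geq 1$, and observing that $(i,j)$ may appear only once both its lower neighbours $(i-1,j)$ and $(i,j-1)$ are present, the birth time $G(i,j)$ satisfies the last-passage recursion
\[
G(i,j) = \max\bigl(G(i-1,j),\,G(i,j-1)\bigr) + w_{i,j}, \qquad w_{i,j} \stackrel{\text{iid}}{\sim} \text{Exp}(1),
\]
with the boundary convention $G(0,\cdot)=G(\cdot,0)=0$; equivalently $G(m,n) = \max_\pi \sum_{(k,\ell)\in\pi} w_{k,\ell}$ over up-right lattice paths from $(1,1)$ to $(m,n)$. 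The height function at column $x$ is then recovered from $|x|$ and the counting variable $N(t,x) = \max\{i\geq 0 : G(i,\,i+|x|)\leq t\}$.

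The theorem is thus reduced to the LPP shape theorem
\[
\lim_{M\to\infty}\frac{G(\lfloor Ma\rfloor,\lfloor Mb\rfloor)}{M} = \gamma(a,b) := \bigl(\sqrt{a}+\sqrt{b}\bigr)^2 \quad \text{a.s.}
\]
Existence of \emph{some} concave, positively homogeneous, deterministic limit $\gamma(a,b)$ is soft: Kingman's subadditive ergodic theorem applied to the subadditive family $\{-G(m,n)\}_{m,n\geq 1}$, together with the $L^1$ control afforded by $\mathbb{E} w_{i,j}<\infty$, yields convergence along rational directions, and concavity plus monotonicity then extend this to all $(a,b)\in\R_{>0}^2$. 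The delicate point is identifying $\gamma$ explicitly. Rost's original strategy exploits the family of product Bernoulli$(\rho)$ stationary measures of TASEP: the equilibrium particle current is $j(\rho)=\rho(1-\rho)$, and the hydrodynamic density under step initial data is thereby identified as the entropy solution of the inviscid Burgers equation $\partial_t\rho + \partial_x(\rho(1-\rho))=0$, which is the rarefaction fan $\rho(t,x)=\tfrac{1}{2}(1-x/t)$ on $|x|\leq t$; a Legendre transform between density profile and limit shape then pins down $\gamma$. A modern alternative is Johansson's determinantal identity for $G(m,n)$ combined with steepest-descent asymptotics on the resulting Meixner ensemble.

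Given $\gamma$, the remainder is a short variational reading-off: maximising $\alpha\geq 0$ subject to $(\sqrt{\alpha}+\sqrt{\alpha+|x|})^2 \leq t$ yields the critical value $\alpha^*=(t-|x|)^2/(4t)$ for $|x|<t$ and $\alpha^*=0$ otherwise, and substituting into the expression for $h(tM,xM)/M$ produces, up to the sign/normalisation convention used in defining the height function, the stated piecewise-parabolic profile $\mathfrak{h}(t,x)$. The genuine obstacle of the argument is therefore concentrated entirely in the explicit identification of $\gamma(a,b)$; the exclusion-process encoding, the LPP reduction, and the final calculus are essentially bookkeeping.
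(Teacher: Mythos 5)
The paper does not prove Theorem \ref{thm: Wedge initial data}; it is quoted and attributed to Rost \cite{rost1981non}, so there is no internal argument to compare against. That said, your sketch is a correct high-level account of how the result is established in the literature. The slope-increment encoding of the corner growth height function as a TASEP configuration, the bijection between growth events and the exponential last-passage recursion $G(i,j)=\max\bigl(G(i-1,j),G(i,j-1)\bigr)+w_{i,j}$, the Kingman subadditive ergodic theorem giving existence of a concave, positively homogeneous limit $\gamma$, and the genuinely hard step of identifying $\gamma(a,b)=(\sqrt{a}+\sqrt{b})^2$ --- either via Rost's coupling with the product Bernoulli stationary measures and the entropy solution of Burgers' equation, or via Johansson's determinantal formula and steepest-descent asymptotics --- are all stated correctly and in the right logical order. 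You are also right that the last step is a short variational computation, and your critical value $\alpha^\star=(t-|x|)^2/(4t)$ is correct.

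It is worth carrying that final computation to completion, because it exposes a sign error in the statement as quoted. Each filled box raises the height by $2$, so $h(tM,xM)/M \to |x| + 2\alpha^\star = (t^2+x^2)/(2t)$ on $|x|<t$, which matches the outer branch $|x|$ continuously at $|x|=t$ and respects the monotonicity $h(t,x)\geq h(0,x)=|x|$. The printed inner branch $t(1-(x/t)^2)/2=(t^2-x^2)/(2t)$ vanishes at $|x|=t$, lying strictly below $|x|$ there and below the initial condition throughout, so it cannot be the limit shape of this growth model; it should read $\tfrac{t}{2}\bigl(1+(x/t)^2\bigr)$. Your hedge about ``sign/normalisation convention'' is therefore too generous to the source --- your own calculus, followed through, gives the correct formula.
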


The fluctuations of the model around the limiting profile \(\mathfrak h\) are what is believed to be universal. Simulations, see \cite{corwin2016kardar} show that the fluctuations around \(\mathfrak h\) are small in comparison the transversal correlations, due to the parabolic nature of the limiting profile. This leads one to consider the centered height function 
\[
h_{\epsilon}(t,x)\coloneqq  \epsilon^bh(\epsilon^{-z}t, \epsilon^{-1}x)-\frac{\epsilon^{-1}t}{2} \quad \text{for } \epsilon>0.
\]
\noindent where the \textit{dynamic scaling exponent} \(z = 3/2\) and the \textit{fluctuation exponent} \(b = 1/2\). The above scaling corresponds to the aforementioned 3 :
2 : 1 scaling in time, space and fluctuations respectively. Note that these are the characteristic exponents for the KPZ universality class. That this scaling produces non-trivial results was shown in 1999, when Johansson \cite{johansson2000shape}, proved that for fixed \(t\), as \(\epsilon \to 0\), the random
variable \(h_{\epsilon}(t,0)\) converges to a GUE Tracy-Widom distributed random variable, which heavily features in the theory of random matrices. As for the entire scaled process \(h_{\epsilon}(t, x)\), convergence to a limiting process (the \textbf{KPZ fixed point}), with dependence on initial data, remains conjectural.\\

The dynamic nature of the KPZ fixed point, its dependence on initial data and the 3:2:1 scaling leads to the study of the KPZ equation, a stochastic PDE, which qualitatively seems to exhibit properties that characterise membership in the KPZ universality class, which we now turn to.

\subsection{KPZ equation and its scaling properties}
The \textbf{KPZ equation} is written as
\begin{equation}\label{eq: KPZ eqn}
    \frac{\partial h}{\partial t}(t,x) = \nu\frac{\partial^2 h}{\partial x^2}(t,x)+\frac{1}{2}\lambda \left(\frac{\partial h}{\partial x}\right)^2(t,x)+\sqrt{D}\xi(t,x)
\end{equation}

\noindent where \(\xi(t, x)\) is Gaussian space-time white noise, formally, a mean-zero Gaussian process with no spatial correlation, though technically this construction can only be carried through when \(\xi\) is seen as a \textit{stochastic tempered distribution}, see \cite{streit1994introduction}. Note \(\lambda, \nu \in \R, D > 0\) and \(h(t, x)\) is taken to be a real valued continuous function of time \(t \in \R_{+}\) and space \(x \in \R\). Due to the white-noise, it is natural to expect \(x \mapsto h(t, x)\) to be only as regular as Brownian motion. It is worth noting that in general this equation is classically ill-posed, see \cite{quastel2011introduction}, though Hairer developed the theory of regularity structures, see \cite{hairer2014theory}, as a means of constructing solutions to certain ill-posed SPDEs, particularly the KPZ equation, ultimately earning him the Fields medal.\\

Qualitatively, the equation contains the aforementioned properties  characterising membership in the KPZ universality class. This can be seen by analysing the terms that appear in the KPZ equation \ref{eq: KPZ eqn}. First, the growth is local, smoothing appears through the Laplacian term, non-linear slope-dependent growth is also expected to be present due to the square of the gradient, and the presence of Gaussian white-noise is compatible with the requirement that growth be driven by noise that decorelates quickly and is not heavy tailed.\\

The question still remains, how might one see the 3 : 2 : 1 scaling in the KPZ equation? We present the following heuristic due to \cite{corwin2016kardar}. One can proceed as before and define the scaled functions \(h_{\epsilon}(t,x)\coloneqq  \epsilon^bh(\epsilon^{-z}t, \epsilon^{-1}x)\) for \(\epsilon > 0\). Then, \(h_{\epsilon}(t,x)\) still satisfies the KPZ equation \ref{eq: KPZ eqn}, though with different with parameters, namely, \(\epsilon^{2-z}\nu, \epsilon^{2-z-b}\frac{1}{2}\lambda\) and \(\epsilon^{b-\frac{z}{2}+\frac{1}{2}}\sqrt{D}\). It turns out that two-sided Brownian motion is stationary for the KPZ equation; thus, it is natural to impose that $b = 1/2$, so that the scaling respects the Brownian scaling of the initial data. With this constraint, the only way to avoid coefficients blowing up to infinity,
or shrinking down to zero (as \(\epsilon \to 0\)) is to
choose \(z = 3/2\). The above informal argument makes a convincing case for the plausibility of the 3 : 2 : 1 making its appearance in the class of models, whose dynamics are believed to be captured by the KPZ equation, as first postulated by Kardar, Pharisi and Zhang in \cite{kardar1986dynamic}.\\

\section{KPZ fixed point}

The random growth processes that are expected to have the same scaling and asymptotic fluctuations as the KPZ equation and converge to the universal limiting object called the KPZ fixed point, are said to lie in the KPZ universality class. Though, the exact nature of this limiting KPZ fixed point remains rather elusive. The above heuristic arising from a qualitative analysis of the KPZ equation reveals essentially no new information on the distribution of this limiting process.\\

More recently however, in the realm of interacting particle systems, the authors in \cite{matetski2021kpz}, constructed a candidate for the KPZ fixed point upon considering the scaling limit of the associated height function of the totally asymmetric simple exclusion process (TASEP), under the 3:2:1 scaling, and proved that it has a Markovian structure in time. Briefly, in this model, one considers an infinite sequences of particles with positions
\[
\dots<X_t(2)<X_t(1)<X_t(0)<X_t(-1)<X_t(-2)<\dots \quad \text{ on } \Z\cup\pm \infty.
\] 
The evolution of the system is driven by each particle independently attempts to occupy its right-adjacent spot, provided it is \textit{empty}, with exponential waiting time with rate \(1\), see figure \ref{fig:TASEP}. To each integer \(u\), one can associate \(X^{-1}_t(u) = \min\{k\in \Z: X_t(k)\leq u\}\), that the label of the right-most particle with position up to \(u\). 

\begin{figure}[H]
    \centering
    \includegraphics[scale = 0.5]{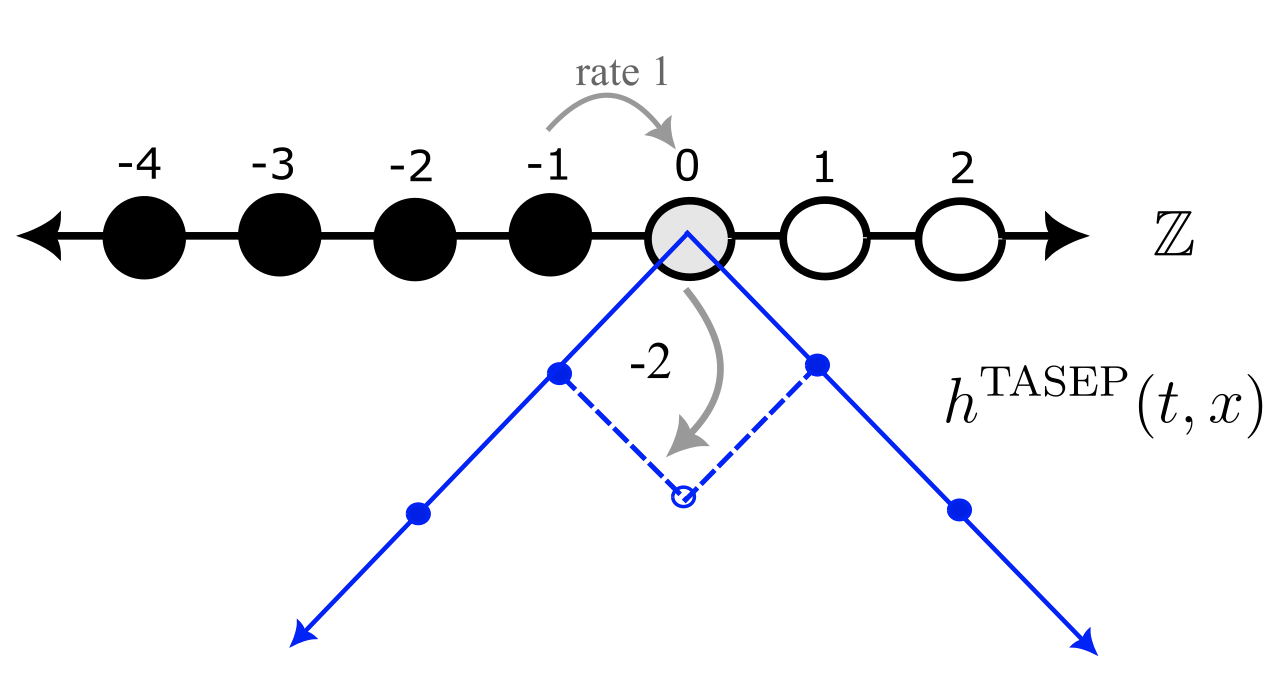}
    \caption{Height function for TASEP with step initial condition. The convention is to label the rightmost particle as \textquotedblleft\(1\)\textquotedblright. Occupied spots are denoted in black, and empty ones in white. Observe that the rightmost particle moving to the right (indicated with \color{black!60} grey\color{black}) makes the local maximum into a local minimum in the height function.}
    \label{fig:TASEP}
\end{figure}

\noindent The height function at time \(t\in\R_{+}\) is now given by
\[
h^{\text{TASEP}}(t,z) = -2(X^{-1}_t(z-1))-X^{-1}_0(-1)-z, \quad \text{ for } z\in\Z
\]
and interpolating linearly in between, so that \(h^{\text{TASEP}}(0,0) = 0\). Further observe that 
\[
h^{\text{TASEP}}(t, z+1) = h^{\text{TASEP}}(t, z) + \eta_t(z), \quad z\in \Z,
\]
with 
\[
\eta_t(z) = 2(X^{-1}_t(z-1)-X^{-1}_t(z)) -1\]
\[
= \begin{cases}
    1 & \text{if there is a particle at } z \text{ at time } t\\
    -1 & \text{if there is no particle at } z \text{ at time } t
\end{cases}
\]
which means that a local maximum of \(h^{\text{TASEP}}_t\) at \(z\) will evolve into a local minimum at \(z\) with rate 1, and the rest of the function remains unchainged. This is because, for a local maximum to exist at \(z\), we require \(h^{\text{TASEP}}(t, z+1) = h^{\text{TASEP}}(t, z)+1\), and \(h^{\text{TASEP}}(t, z-1) = h^{\text{TASEP}}(t, z)-1\) and so by the above, there must be a particle at \(z-1\) and no particle at \(z\). Now, if one makes the KPZ 3:2:1 scaling (including shifting)
\begin{equation}\label{eq: KPZ scaling}
h_{\epsilon}(t,x) = \epsilon^{1/2}(h^{\text{TASEP}}(2\epsilon^{-3/2}t, 2\epsilon^{-1}x) + \epsilon^{-3/2}t), \quad t>0\,,
\end{equation}
Matetski, Remenik annd Quastel in \cite{matetski2021kpz} proved that that \(h_{\epsilon}\) has a distributional limit in the space of upper-semicontinuous functions with certain growth conditions, equipped with the topology of local uniform convergence (see \ref{eq: UC}), which they call the KPZ fixed point.\\

From the construction of the TASEP model, modeling its evolution by taking independent Poisson processes of rate 1 at every site on \(\Z\) and when the Poisson process jumps, the height function jumps down by 2 if and only if it is a local maximum. This is in line with there being an available spot next to the particle, keeping in mind the above discussion. Thus, the evolution of the height given different initial conditions \(h_0\) is coupled, with the coupling depending on the underlying Poisson process construction of the model driving growth. If one considers the height function at time one
\[
\mathcal{A}^{\text{TASEP}}(x,y) = h(1,y), \quad \text{ with } h_0(\cdot) = -|\cdot-x|, x,y\in\R
\]
and its 3:2:1 scaled version, \(\mathcal{A}_\epsilon\), with the same scaling as above for the height function, it has been shown in \cite{matetski2021kpz} that the laws of the \(\mathcal{A}_\epsilon\) are tight in the space \(\mathcal{C}(\R^2; \R)\) for \(\epsilon> 0\). 
Any limiting process is called an \textbf{Airy sheet}, and appears as a
\textquotedblleft residual forcing noise. A variational characterisation of the fixed point was given in \cite{matetski2021kpz} in terms of subsequential limits of these Airy Sheets, $\mathcal{A}$ with initial data \(h_0(\cdot)\)
\begin{equation}\label{eq: variation airy}
    h(t, x) = \sup_{y\in\R}\Bigl(t^{1/3}\hat{\mathcal{A}}(t^{-2/3}x, t^{-2/3}y)+h_0(y)\Bigr)
\end{equation}
in law, where \(\hat{\mathcal{A}}(x,y)=\mathcal{A}(x,y)-(x-y)^2\) and satisfies the metric composition law \ref{eq: metric Airy}. Uniqueness of any subsequential limits was not shown until a proof was provided in the seminal work of \cite{DOV}, see \ref{sec:Airy sheet}, thereby nailing down the right hand side of equation \ref{eq: variation airy}.  

\chapter{KPZ models}
\section{Last Passage Percolation}
\indent We now come to one of the key objects in the KPZ universality class, namely, Brownian last passage percolation which was integral in the construction of the Directed Landscape and Airy Sheet. We begin with the collection of some preliminary facts regarding last passage percolation over ensembles of functions following \cite{DOV}.\\

Formally, let \(I\subset \mathbb{Z}\) be a possible finite index set and define the space \(C^{I}\) of sequences of continuous functions with domain the reals. That is the space
\[f: \mathbb{R}\times I\to \mathbb{R}\quad (x,i)\mapsto f_i(x)\,.\]

\begin{definition}[Path] Let \(x\leq y\) in \(\mathbb{R}\), and \(m\leq \ell\) in \(\mathbb{Z}\) respectively. A \textbf{Path}, from \((x,\ell)\) to \((y,m)\) is a non-increasing function \(\pi: [x,y] \to \mathbb{N}\) which is cadlag on \((x,y)\) and takes the values \(\pi(x)= \ell\) and \(\pi(y)= m\).
\end{definition}

\begin{remark}
    The convention that the paths be non-decreasing is so that they match the natural indexing of the Airy Line ensemble, see section \ref{Airy Line}. Thus, when visualised in the plane, the paths become non-decreasing, see figure \ref{fig:Paths}.
\end{remark}

In what is to follow, since we will primarily be considering the Airy line ensemble (see \ref{Airy Line}), hence we will take the indexing set to be \(I = \mathbb{N}\). We now define an important quantity associated to each such path, namely, its \textit{length} as the sum of increments of \(f\) along \(\pi\). This also leads one to natural define a derived quantity, namely the \textit{last passage value}.

\begin{definition}[Length] Let \(x\leq y \in \mathbb{R}\) and \(m < \ell\in\mathbb{Z}\). For each \(m\leq i <\ell\), let \(t_i\) denote the jump of the path \(\pi\), on an ensemble \((f_i)_{i\in I}\), from \(f_{i+1}\) to \(f_i\). Then the length of \(\pi\) is defined as 
\[
\ell(\pi) = f_m(y)-f_m(t_m) + \displaystyle\sum_{k = m+1}^{\ell-1}(f_i(t_{i-1})-f_i(t_i))+f_{\ell}(t_{\ell-1})-f_{\ell}(x)\,.
\]
\end{definition}

\begin{definition}[Last Passage Value]\label{Definition: last passage}
    With \(x\leq y, m<\ell\) as before and \(f\in C^{I}\), define the \textbf{last passage value} of \(f\) from \((x,\ell)\) to \((y,m)\) as
    \[
    f[(x,\ell)\to(y,m)] \coloneqq \displaystyle \sup_{\pi}\ell(\pi)\,,
    \]
where the supremum is over precisely the paths \(\pi\) from \((x,\ell)\) to \((y,m)\).
\end{definition}
\begin{remark}
    Any path \(\pi\) from \((x,\ell)\) to \((y,m)\) such that its length is equal to its last passage value is called a \textbf{geodesic}. Such geodesics possess a rich structure that was investigated in depth in both \cite{DOV} and \cite{sarkar2021brownian} and will be discussed further below.
\end{remark}

Note that the length of a path \(\ell(\pi)\), can be viewed as a function on the subset \(\mathcal{Z}\) of non-increasing cadlag functions with fixed endpoints in \(\mathbf{D}\), the space of cadlag functions \(\mathbf{D}\coloneqq \mathbf{D}([x,y], \N)\). When endowed with respect to to the  Skorokhod topology, which is metrisable, with metric given by \[
d(f,g) = \displaystyle \inf_{\lambda\in \Lambda}\max\{\|\lambda-id\|,\|f\circ\lambda-g\|\}, \quad \text{for } f,g \in \mathbf{D}\,,
\] 
where \(\Lambda\) is the set of strictly increasing continuous functions from \([x,y]\) to \([x,y]\) such that \(\lambda(x) = x\) and \(\lambda(y) = y\), the above function is continuous. Indeed, this can be seen by viewing \(\ell(\pi)\) as a sum of increments \(f\in \mathcal{C}^I\) over a path \(\pi\). Formally, we use the following lemma to help us establish continuity. 

\begin{lemma}
    For \(x_n, x\in \mathbf{D}\), if \(d(x_n,x)\to 0, n\to \infty\), and \(x\) is non-constant, then \(x_n(\tau_n)\to x(\tau), n\to \infty\), where \(\tau_n, \tau\) are any jump times of \(x_n, x\) respectively (for \(n\) large enough). 
\end{lemma}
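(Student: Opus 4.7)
The plan is to exploit the fact that elements of $\mathbf{D} = \mathbf{D}([x,y], \N)$ are $\N$-valued, which reduces approximate Skorokhod convergence to eventual exact equality modulo a time-change.

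First, I unpack the Skorokhod hypothesis: since $d(x_n, x) \to 0$, I can pick $\lambda_n \in \Lambda$ nearly realising the infimum in the definition of $d$, so that $\|\lambda_n - \mathrm{id}\|_\infty \to 0$ and $\|x_n \circ \lambda_n - x\|_\infty \to 0$ as $n \to \infty$. I then upgrade this uniform closeness to pointwise equality for large $n$. Because $\lambda_n$ is a strictly increasing continuous bijection of $[x,y]$ onto itself and $x_n$ is cadlag with integer values, the composition $x_n \circ \lambda_n$ is cadlag and integer-valued, so $(x_n \circ \lambda_n)(s) - x(s) \in \Z$ for every $s \in [x,y]$. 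Once $n$ is large enough that $\|x_n \circ \lambda_n - x\|_\infty < 1/2$, this integer-valued difference must vanish identically, giving $x_n \circ \lambda_n \equiv x$ on $[x,y]$.

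Second, I translate between the jump sets. A strictly increasing continuous bijection preserves cadlag structure and transports jumps: the jump set of $x_n \circ \lambda_n$ is precisely the $\lambda_n$-preimage of the jump set of $x_n$. Combined with the pointwise identity $x_n \circ \lambda_n = x$, every jump time $\tau$ of $x$ is also a jump time of $x_n \circ \lambda_n$, and hence $\tau_n \coloneqq \lambda_n(\tau)$ is a jump time of $x_n$ satisfying $x_n(\tau_n) = (x_n \circ \lambda_n)(\tau) = x(\tau)$ on the nose; moreover $\tau_n \to \tau$ because $\|\lambda_n - \mathrm{id}\|_\infty \to 0$. The non-constancy hypothesis on $x$ enters only to guarantee that at least one jump time exists, so that the statement is not vacuous.

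The step I would normally expect to be the main obstacle — pairing a given jump of $x$ with a specific jump of $x_n$ and verifying that the evaluations align — is essentially trivial here thanks to the $\N$-valued target: in the general real-valued cadlag setting one would have to track jump magnitudes and locations carefully under the time-change, but here uniform proximity below $1/2$ forces exact coincidence after the time-change, so no delicate matching argument is required.
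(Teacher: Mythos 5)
Your proposal is correct and follows essentially the same route as the paper: extract the near-optimal reparameterisations $\lambda_n$, use the $\N$-valued range to upgrade the uniform bound $\|x_n\circ\lambda_n - x\|_\infty < 1$ (you use $<1/2$, which is equally fine but not needed) to exact equality $x_n\circ\lambda_n \equiv x$, then identify the jump times of $x_n$ with the $\lambda_n$-images of those of $x$ and conclude via $\|\lambda_n - \mathrm{id}\|_\infty \to 0$. Your observation that the evaluations actually coincide exactly, $x_n(\tau_n) = x(\tau)$, and that the limit in the statement is strictly weaker than what is proved, is a nice explicit remark that the paper leaves implicit.
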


\begin{proof}
    Since \(d(x_n,x)\to 0,\) as \(n \to \infty\), there exists a sequence \((\lambda_n)_{n\in \N}\) of continuous, strictly increasing functions in \(\Lambda\) such that 
    \begin{equation}\label{eq: metric}
        \|\lambda_n-\text{id}\|_{\infty}\to 0, \quad \|x_n\circ\lambda_n-x\|_{\infty}\to 0, \text{as } n\to \infty.
    \end{equation}
    In particular, there exists \(N\in \N\) such that
    \[
    \|x_n\circ \lambda_n - x\|_{\infty}<1, \quad \text{for all } n\geq N.
    \]
    Since \(x_n, x\) take values in \(\N\), it follows that 
    \begin{equation}\label{eq: reparam}
        \forall n\geq N \forall t\in [x,y]: \quad  x_n(\lambda_n(t))= x(t).
    \end{equation}
    
\noindent Thus, up to a strictly increasing time reparameterisation, the functions \((x_n)_{n\geq N}\) agree with \(x\). Suppose now that \(\tau_i\) is the \(i\)th jump time of \(x\). It follows from \ref{eq: reparam} and a simple induction that for \(n\geq N\), \(x_n\) has the same number of jumps as x with \(\lambda_n(\tau_i)\) being \(i\)th jump time of \(x_n\). This in conjunction with \ref{eq: metric} concludes the proof.
\end{proof}

Since \(\mathcal Z\) is closed with respect to the above topology of \textquotedblleft jump times\textquotedblright, a compactness argument using Arzela-Ascoli, see Chater $3$ in \cite{billingsley2013convergence}, implies that the supremum over admissible paths is attained, showing that there do indeed exist geodesics. \\

Last passage percolation enjoys the following \textbf{metric composition law}, Lemma 3.2 in DOV \cite{DOV}.

\begin{lemma}[Metric composition law]\label{Lemma: Metric Composition}
    Let \(x\leq y \in \mathbb{R}\), \(m < \ell\in\mathbb{Z}\) and \(f\in C^I\). If \(k\in \{m, \dots, \ell\}\), then we have
    \[
    f[(x,\ell)\to(y,m)] = \displaystyle \sup_{z\in[x,y]}(f[(x,\ell)\to(z,k)]+f[(z,k)\to(z,m)])
    \]
    and if \(k\in \{m+1, \dots, \ell\}\), then 
    \[
    f[(x,\ell)\to(y,m)] = \displaystyle \sup_{z\in[x,y]}(f[(x,\ell)\to(z,k)]+f[(z,k-1)\to(z,m)])
    \]
    Furthermore for any \(z\in [x,y]\), 
    \begin{equation}\label{eq: composition}
    f[(x,\ell)\to(y,m)] = \displaystyle \sup_{k\in \{m, \dots, \ell\}}(f[(x,\ell)\to(z,k)]+f[(z,k)\to(z,m)])
    \end{equation}
\end{lemma}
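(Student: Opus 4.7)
The plan is to establish each of the three identities by the standard double-inequality template for path-level additive quantities, exploiting the fact that a length $\ell(\pi)$ of any admissible path telescopes along the jump times, so restrictions and concatenations at compatible breakpoints behave additively. Throughout, I would use the existence of maximising (geodesic) paths already noted in the text as a consequence of Arzel\`a--Ascoli on $\mathcal{Z}$ with the Skorokhod topology, together with the observation that a non-increasing cadlag function $\pi:[x,y]\to\mathbb{N}$ with $\pi(x)=\ell$ and $\pi(y)=m$ necessarily attains every intermediate integer value $k\in\{m,\dots,\ell\}$ on a non-empty closed interval. This discrete intermediate value property is what legitimises decomposing any path through a point of the form $(z,k)$.

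For the first identity, I would prove $\leq$ by taking a near-optimal path $\pi$ from $(x,\ell)$ to $(y,m)$, picking any $z$ in $\pi^{-1}(k)\subset[x,y]$, and restricting to $[x,z]$ and $[z,y]$; the key calculation is that the length formula splits exactly, since the internal endpoint contributes $f_k(z)-f_k(t_{k-1})$ to the first restriction and $f_k(t_k')-f_k(z)$ (with the appropriate relabelled jump time) to the second, and these telescope. The matching $\geq$ direction proceeds by concatenation: given any two admissible paths $\pi_1$ from $(x,\ell)$ to $(z,k)$ and $\pi_2$ from $(z,k)$ to the appropriate second endpoint, the glued function is still non-increasing and cadlag on $(x,y)$ precisely because both agree on line $k$ at the join point $z$, and its length is $\ell(\pi_1)+\ell(\pi_2)$. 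Taking suprema over paths and then over $z\in[x,y]$ closes both sides. The second identity is handled identically except that the split point is taken to be the jump time at which $\pi$ descends from line $k$ to line $k-1$, so the second sub-path starts on line $k-1$ at position $z$ and the bookkeeping of boundary terms at $z$ on lines $k$ and $k-1$ must be checked not to leave residual increments. The third identity is the fixed-$z$ dual: for a given $z\in[x,y]$ and a given path $\pi$, let $k=\pi(z)$, which lies in $\{m,\dots,\ell\}$, and apply the same split; optimisation then runs over $k$ instead of over $z$.

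The main technical hurdle I anticipate is not the algebra of the length decomposition but the verification that the concatenated function remains a valid path in the precise sense of the definition, i.e.\ cadlag on the open interval with the declared boundary values, so that one is genuinely supremising over the same class of paths on both sides of the identity; this is particularly delicate in the second identity where one must ensure that a single jump-from-$k$-to-$k-1$ at the join point is admissible and contributes no extra length. Once that is pinned down, the telescoping identity for $\ell(\pi)$ across the split point $z$ reduces each of the three equations to a routine sup-over-concatenations argument, and everything is self-dual in the sense that the same proof template handles all three cases with only the choice of split datum (value $k$, jump-time at $z$, or line occupied at a given $z$) varying.
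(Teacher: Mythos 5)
Your proposal follows essentially the same route as the paper's proof: the $\leq$ direction by restricting a (near-)optimal path to $[x,z]$ and $[z,y]$ so that the length telescopes across the breakpoint, and the $\geq$ direction by concatenating admissible sub-paths with care about cadlag behaviour at the join. The paper only works equation~\ref{eq: composition} in detail and asserts the other two identities follow similarly; you sketch all three, and you make explicit the discrete intermediate-value observation (a non-increasing cadlag $\mathbb{N}$-valued path hits every integer level on a closed set) that the paper leaves implicit, which is what underwrites the choice of $z\in\pi^{-1}(k)$ in the first identity. You also correctly flag the one genuine subtlety — that the glued function may fail right-continuity at $z$ if $\pi_2$ jumps immediately, requiring a redefinition at the single point $z$ using the right limit — which is precisely case (ii) in the paper's concatenation argument. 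So this is the same proof, slightly more carefully annotated.
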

\begin{proof}
These statements are deterministic properties of last passage and for the sake of brevity, we will only focus on proving \ref{eq: composition}, as the rest follow in a similar manner.\\
\indent (\(\leq\)): fix \(z\in[x,y]\) and observe that for any path \(\pi\) from \((x,\ell)\) to \((y,m)\), its length \(\ell(\pi)\), that is the sum of increments of \(f\) over \(\pi\), can be split into two terms corresponding to the lengths of the restrictions of \(\pi\) to \([x,z]\) and \([z,y]\), viewed as paths, and thus we obtain the bound on the right hand side of \ref{eq: composition}. Taking suprema over \(\pi\) concludes the proof of this direction.\\
\indent (\(\geq\)): notice that for any \(k\in\{m, \dots, \ell\}\)
and paths \(\pi_1\) from \((x,\ell)\) to \((z,k)\) and \(\pi_2\) from \((z,k)\) to \((y,m)\), they can either:
(i) be concatenated to produce a path \(\pi\) from \((x,\ell)\) to \((y,m)\) if \(\pi_2\) is right-continuous at \(z\)
or
(ii) construct a path that agrees with \(\pi_1\) on \([x,z)\) and \(\pi_2\) on \((z, y]\) and is equal to the right limit of \(\pi_2\) at \(z\), which always exists due to the fact that all paths are non-increasing. In both cases, the length of the constructed path from \((x,\ell)\) to \((y,m)\) has length equal to the sum of the lengths of \(\pi_1\) and \(\pi_2\), which is of course bounded by the last passage value. Taking suprema over all admissible \(\pi_1, \pi_2\) and \(k\in \{m,\dots, \ell\}\), enables us to conclude the proof of equality. 
\end{proof}

We now collect an immediate corollary.
\begin{corollary}[Reverse triangle inequality]
    For any \(z\in [x,y]\) and \(k\in \{m, m+1,\dots, \ell\}\), 
    \[
    f[(x,\ell)\to(y,m)] \geq \displaystyle f[(x,\ell)\to(z,k)]+f[(z,k)\to(z,m)]
    \]
\end{corollary}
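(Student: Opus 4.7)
The plan is to observe that this corollary is an immediate consequence of equation \ref{eq: composition} of Lemma \ref{Lemma: Metric Composition}. Specifically, that equation states
\[
f[(x,\ell)\to(y,m)] = \sup_{k\in \{m, \dots, \ell\}}\bigl(f[(x,\ell)\to(z,k)]+f[(z,k)\to(z,m)]\bigr),
\]
valid for any fixed $z\in[x,y]$. Since a supremum over a set dominates each of its individual terms, fixing any particular $k\in\{m,m+1,\dots,\ell\}$ on the right-hand side yields the desired inequality.

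Alternatively, if one wished to prove the corollary directly (without appealing to the full equality in the composition law), I would reproduce only the $(\geq)$ half of the argument given in Lemma \ref{Lemma: Metric Composition}: given any admissible paths $\pi_1$ from $(x,\ell)$ to $(z,k)$ and $\pi_2$ from $(z,k)$ to $(y,m)$, either concatenate them (if $\pi_2$ is right-continuous at $z$) or form the path equal to $\pi_1$ on $[x,z)$, to the right limit of $\pi_2$ at $z$, and to $\pi_2$ on $(z,y]$. In either case the resulting object is a valid path from $(x,\ell)$ to $(y,m)$ whose length equals $\ell(\pi_1)+\ell(\pi_2)$, and hence is bounded above by $f[(x,\ell)\to(y,m)]$. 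Taking the supremum over $\pi_1$ and $\pi_2$ separately gives the claimed inequality.

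There is no real obstacle here; the statement is genuinely a one-line consequence of the composition law, and the only subtlety worth flagging is that this holds for any individual $k$ in the allowed range (not just the optimal one), which is exactly what distinguishes the reverse triangle inequality from the equality in Lemma \ref{Lemma: Metric Composition}.
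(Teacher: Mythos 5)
Your proposal is correct and matches the paper's (implicit) reasoning exactly: the paper simply presents this as "an immediate corollary" of Lemma \ref{Lemma: Metric Composition}, and your one-line observation that any single term of the supremum in equation \ref{eq: composition} is dominated by the supremum is precisely that deduction. The alternative direct argument you sketch is also just the $(\geq)$ half of the lemma's proof, so no genuinely different route is involved.
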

    
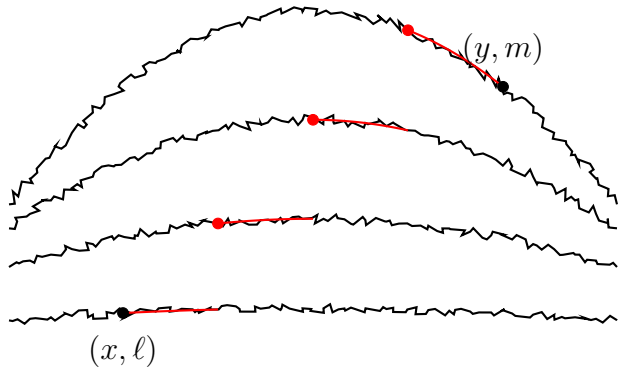
\begin{figure}[H]
  \centering
      \begin{tikzpicture}
    \pgfmathsetseed{12345}

    \def\startx{1}
    \def\endx{9}
    \def\startg{2.5}
    \def\endg{7.5}
    \def\spacing{1} 


    \foreach \i in {1,...,4} {

      \draw[thick, black, decoration={random steps, segment length=2pt, amplitude=2pt}, decorate] 
      plot[domain=\startx:\endx, samples=30] (\x, {\i*\spacing - 2.5 - (\i)*(\i)*(\x-\startg)*(\x-\endg)*0.01});
      
      \draw[thick, red, decoration={random steps, segment length=2pt, amplitude=2pt}, decorate]
      plot[domain=\startg+(\i-1)*(\endg-\startg)/4:\startg+(\i)*(\endg-\startg)/4, samples=30] (\x, {\i*\spacing - 2.5 - (\i)*(\i)*(\x-\startg)*(\x-\endg)*0.01});
      \ifnum\i>1
      \ifnum\i<5
        \filldraw[red] (5/4*\startg+\i*0.25*\endg-\i*0.25*\startg-0.25*\endg, {\i*\spacing - 2.5 - (\i)*(\i)*((\i-1)/4)*(\endg-\startg)*(\startg+(\i-1)/4*(\endg-\startg)-\endg)*0.01}) circle (2pt);
      \fi
    \fi

        } 
    \filldraw[black] (\startg, 1*\spacing-2.5
      ) circle (2pt);
    \filldraw[black] (\endg, 4*\spacing-2.5
      ) circle (2pt);

      \node at (\startg, 1*\spacing-3) {\((x,\ell)\)};
      \node at (\endg, 4*\spacing-2) {\((y,m)\)};
   \end{tikzpicture}
   
   \caption{Visualisation of a possible path (red) \textquotedblleft embedded \textquotedblright \space on the Airy line ensemble, here \((\mathcal{A}_1, \mathcal{A}_2, \mathcal{A}_3, \mathcal{A}_4)\) and \(m = 1, \ell = 4\) (see section \ref{Airy Line}).}
  \label{fig:Paths}
\end{figure}

We adopt the following important definition from \cite{corwin2014brownian}, since from now on, we will be concerning ourselves with primarily not deterministic, but random ensembles.

\begin{definition}[Random Ensemble]\label{def; random ensemble}
    Let $\Sigma$ be a (possibly infinite) interval of $\Z$, and let $\Lambda$ be an interval of $\R$. Consider the set \(X\coloneqq\mathcal{C}^\Sigma\) of continuous functions $f:\Sigma\times \Lambda \rightarrow \R$. We endow it with the topology of uniform convergence on compact subsets of $\Sigma\times\Lambda$. Let $\mathcal{C}$ denote the sigma-field  generated by Borel sets in $X$.\\

    A {\it $\Sigma$-indexed line ensemble} $\mathcal{L}$ is a random variable defined on a probability space $(\Omega,\mathcal{B},\PP)$, taking values in $X$ such that $\mathcal{L}$ is a $(\mathcal{B},\mathcal{C})$-measurable function. Furthermore, we write $\mathcal{L}_i:=(\mathcal{L}(\omega))(i,\cdot)$ for the line indexed by $i\in\Sigma$. 
\end{definition}

\begin{remark}
\centering
\begin{enumerate}
    \item The extended reals $\R\cup\{-\infty,+\infty\}$ will be denoted by $\R^*$ and we endow $\R^*$ with the usual topology of $\R$ and the discrete topology for $+\infty$ and $-\infty$.
    \item Note the sigma algebra \(\mathcal{C}\) is generated by the cylinder sets $\{f\in X: f(i,x)\in U\}$, as the parameters range over $i\in \Sigma$, $x\in \Lambda$ and $U \subseteq \R^*$ open.
    \item We will often slightly abuse notation and write $\mathcal{L}:\Sigma\times \Lambda \rightarrow \R$, even though it is not $\mathcal{L}$ which is such a function, but rather $\mathcal{L}(\omega)$ for each $\omega \in \Omega$. 
\end{enumerate}
\end{remark}

Intuitively, $\mathcal{L}$ is a collection of random continuous curves (even though we use the word \textquotedblleft line \textquotedblright we are referring to continuous curves), indexed by $\Sigma$, each of which maps $\Lambda$ into $\R$.\\

Given a $\Sigma$-indexed line ensemble $\mathcal{L}$, and a sequence of such ensembles $\big\{ \mathcal{L}^N: N \in \N \big\}$,  a natural notion of convergence is the weak-* convergence of the measure on $(X,\mathcal{C})$ induced by $\mathcal{L}^N$, to the measure induced by $\mathcal{L}$ made precise below.

\begin{definition}[Weak Convergence of Line Ensembles]\label{def: convergence in law}
    With $\mathcal{L}$, and $\big\{ \mathcal{L}^N: N \in \N \big\}$ as above, we say $\big\{ \mathcal{L}^N: N \in \N \big\}$ \textbf{converge weakly as line ensembles} to $\mathcal{L}$ if for all bounded continuous functionals $F$, 
    \[
    \int d\PP(\omega) F(\mathcal{L}^{N}(\omega)) \to \int d\PP(\omega) F(\mathcal{L}(\omega)), \quad \text{as } N\to \infty.
    \]
    and denote it by $\mathcal{L}^N\Rightarrow \mathcal{L}$.
\end{definition}

Furthermore, a line ensemble is said to be \textbf{non-intersecting} if, for all $i<j$, $\mathcal{L}_i(r)>\mathcal{L}_j(r)$ for all $r\in \Lambda$. All statements are to be understood as being almost sure with respect to $\PP$.\\

An example of interest, which has been intensely studied in the literature is the case when one considers a sequence of independent Brownian motions \(B = (B_1, B_2, \dots)\) as an element of the ensemble space \(C^{\mathbb{N}}\), and defines the resulting last passage values \(B[(0,n) \to (1,1)]\) and geodesics \(\pi_n\). These geodesics, have non-trivial scaling limits, namely the content of Theorem 1.1 in \cite{DOV} which states that under a suitable coupling, they converge almost surely to the so called \textquotedblleft directed geodesic \textquotedblright. They are inextricable linked to the Directed Landscape, a random element constructed as a scaling limit of Brownian Last passage percolation, see section \ref{Directed Landscape}. 

\subsection{Pitman Transform}

\indent Having now given the general framework of last passage percolation on ensembles of continuous functions, in the following section, we will define an operation on pairs of continuous functions, namely, the Pitman transform, that essentially produces two non-intersecting curves and can be expressed in terms of last passage percolation. Keep in mind that the proof of the absolute continuity of the KPZ fixed point will be reduced to the case of Brownian last passage percolation, in this sense, the tools developed here play a crucial role, see figure \ref{fig:flowchart}.\\

\indent We will now introduce again certain operations on the space of pairs of continuous functions, denoted by \(\mathcal{C}^2_{+}\) and elements \(f = (f_1, f_2)\), where the \(f_i, i = 1,2\) are real-valued continuous functions on the positive reals. For \(f\in\mathcal{C}^2_{+}\), we are going to construct the Pitman transform \(Wf = (Wf_1, Wf2)\in \mathcal{C}^2_{+}\) in the following way. 

\begin{definition}[Pitman Transform]\label{Pitman}
    For distinct \(x<y\in [0,\infty )\), define their maximal gap size as
\[
G(f_1, f_2)(x,y) \coloneqq  \max\{\displaystyle \max_{s\in[x,y]}(f_2(s)-f_1(s)), 0 \}\,.
\]
Then set
\[
Wf_1(t) = f_1(t)+G(f_1, f_2)(0,t) 
\]
and
\[
Wf_2(t) = f_2(t)-G(f_1, f_2)(0,t)\,.
\]
\end{definition}

\noindent
\begin{minipage}{0.6\textwidth}  
$\quad$ Essentially, the curve \(Wf_1\), represents the reflection of \(f_1\) off \(f_2\), see figure \ref{fig:Paths} for a more concrete example.  Furthermore,  the fact that 
$G(f_1, f_2)(x,y) =  \displaystyle \max_{s\in[x,y]}(f_2(s)-f_1(s))$ when $ \displaystyle \max_{s\in[x,y]}(f_2(s)-f_1(s))\geq 0 $ and zero otherwise,  gives that \(Wf_1\geq \max\{f_1, f_2\}\) and  \(Wf_2\leq \min\{f_1, f_2\}\).\\

$\quad$ More interesting perhaps is that one can express the top line of the Pitman transform in terms of last passage values. 
\end{minipage}
\hfill  
\begin{minipage}{0.35\textwidth}  
    \centering
   \begin{tikzpicture}[scale = 0.7]
  \draw[black, thick, domain=0:2, samples=20] plot (\x, {0.1*\x^4 + 1}) node[right] {\(f_1\)};
  
  \draw[black, thick, domain=0:2, samples=20] plot (\x, {2*exp(-\x)}) node[right] {\(f_2\)};

  \draw[red, thick, dotted, domain=0:2, samples=20] plot (\x, {0.1*\x^4 + 2}) node[right] {\(Wf_1\)};

  \draw[blue, thick, dotted, domain=0:2, samples=20] plot (\x, {2*exp(-\x)-1}) node[right] {\(Wf_2\)};

  \draw[->] (-0.2,0) -- (2.7,0) node[right] {\(x\)};
  \node[below] at (0,-0.1) {0};
  \node[below] at (2,-0.1) {2};

\end{tikzpicture}
    
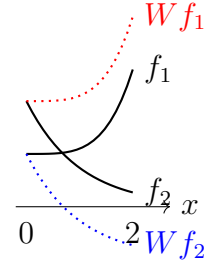
\captionof{figure}{Pitman transform of two simple continuous curves, \(f_1 = 0.1x^{4}+1, f_2 = 2e^{-x}\) on the interval \([0,2]\). Notice  \(G(f_1, f_2)\equiv 1\).}
    \label{fig:pitman}
\end{minipage}

\begin{lemma}\label{lemma: Pitman melon}
    Let \(f\in C^2_+\) and let \(Wf = (Wf_1, Wf_2)\) be as above. Then for all \(t\in[0,\infty)\),
    \[
    Wf_1(t) = \displaystyle \max_{i=1,2}\{f_i(0) + f [(0, i) \to (t, 1)]\} \,.
    \]
\end{lemma}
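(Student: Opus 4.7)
The plan is to compute the last passage value $f[(0,i)\to(t,1)]$ directly from Definition \ref{Definition: last passage} for each of the two possible starting indices $i\in\{1,2\}$, and to check that adding $f_i(0)$ and taking the maximum yields precisely $Wf_1(t)$ as defined in Definition \ref{Pitman}. Since there are only two lines involved, the combinatorics of paths is extremely limited: a path from $(0,\ell)$ to $(t,1)$ either stays on a single line (when $\ell=1$) or has exactly one jump from line $2$ to line $1$ at some time $s\in[0,t]$ (when $\ell=2$). So this amounts to bookkeeping with the length formula rather than any deeper structural argument.

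First I would handle the trivial case $i=1$. Here the only admissible path is the constant-index path $\pi\equiv 1$ on $[0,t]$, whose length is the single increment $f_1(t)-f_1(0)$; thus $f_1(0)+f[(0,1)\to(t,1)]=f_1(t)$. Next, for $i=2$, any path $\pi$ from $(0,2)$ to $(t,1)$ is cadlag, non-increasing, and takes only the values $1$ and $2$, so it is determined by its unique jump time $t_1\in[0,t]$ (allowing $t_1=0$ for right-continuous jump at the left endpoint, and $t_1=t$ as a limit, which is handled by the compactness/continuity argument from the $\mathcal{Z}$ discussion in the excerpt). Applying the length formula with $m=1,\,\ell=2,\,x=0,\,y=t$ gives
\[
\ell(\pi)=f_1(t)-f_1(t_1)+f_2(t_1)-f_2(0).
\]
Taking the supremum over $t_1\in[0,t]$ yields
\[
f[(0,2)\to(t,1)]=f_1(t)-f_2(0)+\max_{s\in[0,t]}\bigl(f_2(s)-f_1(s)\bigr),
\]
so that $f_2(0)+f[(0,2)\to(t,1)]=f_1(t)+\max_{s\in[0,t]}(f_2(s)-f_1(s))$.

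Finally I would combine the two cases. The right-hand side of the claimed identity becomes
\[
\max\Bigl\{f_1(t),\ f_1(t)+\max_{s\in[0,t]}(f_2(s)-f_1(s))\Bigr\}=f_1(t)+\max\Bigl\{0,\ \max_{s\in[0,t]}(f_2(s)-f_1(s))\Bigr\},
\]
which is exactly $f_1(t)+G(f_1,f_2)(0,t)=Wf_1(t)$ by Definition \ref{Pitman}. The only genuinely subtle point, and thus the main (mild) obstacle, is justifying that the supremum over jump times is realized as a maximum when interpreted as a supremum over paths in $\mathcal{Z}$: this is where the Skorokhod-topology continuity of $\ell(\pi)$ and the Arzel\`a--Ascoli compactness noted after the definition of $\mathcal{Z}$ are invoked, guaranteeing that taking $t_1\in[0,t]$ (closed interval, including the endpoints, with the appropriate cadlag interpretation) is harmless and the supremum is attained by the continuity of $s\mapsto f_2(s)-f_1(s)$ on the compact interval $[0,t]$.
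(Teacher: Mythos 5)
Your proposal is correct and takes essentially the same approach as the paper: both unwind the definitions of the Pitman transform and of last-passage length to identify $f_1(t)=f_1(0)+f[(0,1)\to(t,1)]$ and $f_1(t)+\max_{s\in[0,t]}(f_2(s)-f_1(s))=f_2(0)+f[(0,2)\to(t,1)]$, then combine. The only difference is direction of presentation (you build up from the last-passage values, the paper unfolds $Wf_1$), and your explicit parametrization of two-line paths by their unique jump time is a slightly more hands-on version of the paper's appeal to the metric composition law.
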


\begin{proof}
    By definition, 
    \[
    Wf_1(t) = f_1(t)+G(f_1, f_2)(0,t)\]
    \[
    = f_1(t) + \max\{\displaystyle \max_{s\in[x,y]}(f_2(s)-f_1(s)), 0 \}
    \]
    \[
    = \max\{\displaystyle \max_{s\in[x,y]}(f_2(s)+f_1(t)-f_1(s)), f_1(t) \}\,.
    \]
    Recall from \ref{Definition: last passage} and \ref{eq: composition} the definition of last passage values and the metric composition law respectively. This gives \(f_1(t) = f_1(0) + f [(0, 1) \to (t, 1)]\) and \[ \displaystyle \max_{s\in[0,t]}(f_2(s) + f_1(t) - f_1(s))\} = f_2(0) + f [(0, 2) \to (t, 1)]\,.\] Combining the above gives the result. 
\end{proof}

Particularly in the case where \(f_1(0) = f_2(0) = 0\), we obtain that 
\[
Wf_1(t) = f [(0, 2) \to (t, 1)]\,.
\]
\(Wf\) is commonly referred to the \textbf{Melon} of f, since paths in \(Wf\) avoid each other and thus resemble the stripes of a watermelon.\\

An application of the above that is of interest is that of two independent standard Brownian motions (starting from zero) \(B = (B_1, B_2)\).  Let \(\hat{B}=(\hat{B}_1, \hat{B}_2)\) be two Brownian motions conditioned not to collide, in the sense of Doob \cite{sarkar2021brownian}. Then, the law of the melon \(WB\) is the same as that of \(\hat{B}\). In \cite{o2002representation}, a generalisation was proved for \(n\) Brownian motions, call it \(WB^n\). Furthermore, after appropriate rescaling, \(WB^n\) converges in law to a non-intersecting ensemble on \(C^{\mathbb{N}}\) (with respect to the product on uniform topology on \(C^{\mathbb{N}}\)), Theorem 2.1 in \cite{DOV}, which we will now discuss. 

\section{Airy Line Ensemble}\label{Airy Line}

\begin{theorem}\label{thm: Melon scaling}
    Let \(WB^n\)  be a Brownian \(n\)-melon. Define the rescaled melon \(A^n = (A^n_1, \dots, A^n_n)\)
by
\[
A^n_i(y) = n^{1/6} \left((WB^n)_i(1 + 2yn^{-1/3}) - 2\sqrt{n} - 2yn^{1/6} \right).
\]
Then \(A^n\) converges to a random sequence of functions \(\mathcal{A} = (\mathcal{A}_1, \mathcal{A}_2, \dots) \in C^\mathbb{N}\) in law with respect to product of uniform-on-compact topology on \(C^\mathbb{N}\). For every \(y \in \mathbb{R}\) and \(i < j\), we have that \(\mathcal{A}_i(y) > \mathcal{A}_j(y)\). The function \(\mathcal{A}\) is called the \textbf{(parabolic) Airy line ensemble.}
\end{theorem}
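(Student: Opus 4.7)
The plan is to proceed in three stages: first I will identify the joint law of the Brownian melon $WB^n$ with that of an exactly solvable random-matrix process, then I will deduce convergence of all finite-dimensional distributions of $A^n$ from classical edge-scaling results, and finally I will promote this to weak convergence in $C^{\mathbb{N}}$ by establishing tightness.

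For the first stage, I would invoke the $n$-line generalisation of Lemma \ref{lemma: Pitman melon} due to O'Connell--Yor and Baryshnikov: the Pitman iteration used to build $WB^n$ from $n$ independent Brownian motions $B = (B_1,\dots,B_n)$ expresses each coordinate as a last-passage functional, and the resulting joint law at fixed $t$ coincides (after scaling by $\sqrt{t}$) with the ordered eigenvalues of an $n\times n$ GUE matrix. More generally, as a process in $t$, $WB^n$ has the law of \emph{Dyson Brownian motion}, i.e.\ $n$ independent Brownian motions conditioned in Doob's sense not to collide. This is the natural extension of the $n=2$ statement recorded just after Lemma \ref{lemma: Pitman melon}.

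For the second stage, I would transfer known edge-scaling results to our setting. The affine change of variables $y \mapsto 1 + 2yn^{-1/3}$ together with the centring $-2\sqrt{n}$ and linear tilt $-2yn^{1/6}$ is exactly the soft-edge scaling for Dyson Brownian motion at time $1$. Under this scaling, the joint distribution of the top $k$ lines $(A^n_1,\dots,A^n_k)$ evaluated at any finite collection of times $y_1,\dots,y_m$ is known to converge to the corresponding finite-dimensional distribution of the extended Airy kernel determinantal process (Pr\"ahofer--Spohn, Johansson). This yields convergence of all finite-dimensional distributions of $A^n$ to those of a limiting random sequence $\mathcal{A} = (\mathcal{A}_1, \mathcal{A}_2, \dots)$, and the strict ordering $\mathcal{A}_i(y) > \mathcal{A}_j(y)$ for $i<j$ is inherited from the non-intersection of $WB^n$ together with the fact that the extended Airy kernel assigns zero probability to coincidences.

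The third stage, tightness, is the hardest part. One needs a uniform modulus-of-continuity estimate for each rescaled line $A^n_i$ on every compact interval $[-L,L]$, uniformly in $n$. The key observation is that the Pitman construction gives a pathwise monotone comparison between the top line of $WB^n$ and suitable translations of the underlying Brownian motions, so H\"older-$\frac12^-$ bounds on Brownian motion pass to $WB^n_1$ after edge-scaling; iterating the comparison via the interlacing structure of Dyson Brownian motion controls the lower lines as well. Combined with the finite-dimensional convergence from stage two, this yields tightness in the product uniform-on-compacts topology on $C^{\mathbb{N}}$, identifying the limit as the parabolic Airy line ensemble $\mathcal{A}$.

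The main obstacle is this last tightness argument, specifically controlling all indices $i$ simultaneously: finite-dimensional convergence and tightness of the top curve are classical, but extending the regularity estimates uniformly down the ensemble requires careful use of the non-intersection / interlacing structure, which is precisely what is later repackaged as the Brownian Gibbs property.
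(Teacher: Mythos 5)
The paper does not supply its own proof of this theorem: it is quoted as Theorem 2.1 of DOV, whose proof ultimately rests on Corwin and Hammond's functional tightness result for Dyson Brownian motion. Your stages one and two are the standard route and are sound: the O'Connell--Yor and Baryshnikov identification of the $n$-melon with Dyson Brownian motion, followed by the Pr\"{a}hofer--Spohn and Johansson finite-dimensional convergence to the extended Airy kernel under the soft-edge scaling you write down.

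Your tightness sketch in stage three has a gap. A pathwise monotone comparison between $WB^n_1$ and translated, unconditioned Brownian motions does not survive the edge scaling: the fluctuations of $A^n_1$ on unit scale are order one, carved out of the $O(n^{1/6})$ global fluctuations of $WB^n_1$, so any sandwich between translates of the underlying $B_i$ degenerates in the limit and cannot yield a uniform-in-$n$ modulus of continuity. The actual argument in Corwin--Hammond runs differently: one observes that the pre-limiting ensemble $WB^n$ \emph{already} satisfies the Brownian Gibbs resampling property as a family of non-intersecting Brownian bridges (this is exactly where the Dyson representation pays off), combines this with uniform-in-$n$ one-point tail bounds supplied by Tracy--Widom asymptotics, and extracts the modulus of continuity via a resampling argument in which a monotone coupling between \emph{conditioned} bridge ensembles with different boundary data controls acceptance probabilities. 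You gesture at this when you say the tightness input is ``precisely what is later repackaged as the Brownian Gibbs property,'' but the direction of inference is the reverse of what you suggest: the Gibbs property at the pre-limiting level is the \emph{tool} used to prove tightness, not a repackaging of a tightness estimate obtained by other means.
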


\begin{figure}[H]
    \centering
    \begin{tikzpicture}[
    block/.style={draw, thick, minimum height=1.5cm, minimum width=1.5cm}]
    
    \draw[black, thick] (0,0) -- (0.5,1) -- (0.75,0.7) -- (1,1.3) -- (1.5,1.2) -- (2,2.4) -- (2.5, 2.7) -- (3, 3) node[above right] {\(WB_1\)};
    \draw[black, thick] (0,0) -- (0.5,0.7) -- (0.75,0.5) -- (1,0.5) -- (1.5,1) -- (2,1.4) -- (2.5, 2) -- (3, 1.5) node[above right] {\(WB_2\)};
    \draw[black, thick] (0,0) -- (0.5,-0.3) -- (1,0.1) -- (1.5,0.5) -- (1.75, 0) -- (2,-0.2) -- (2.5, 0.8) -- (2.75, 0.3) -- (3, -0.3) node[above right] {\(WB_3\)};
    \draw[black, thick] (0,0) -- (0.5,-1) -- (1,-0.3) -- (1.5,-0.2) -- (2,-0.7) -- (2.5, -1.2) -- (3, -1) node[above right] {\(WB_4\)};
    
    \begin{scope}[on background layer]
    \draw[blue, thick, fill=blue!10] (1, 0.8) -- (2.5, 1.3) -- (2.5, 2.3) -- (1, 1.8) -- cycle;
    \draw[blue, thick] (0.8,0.8) -- (0.8,1.8) node[midway, left] {\(O(n^{-1/6})\)};
    \draw[blue, thick] (0.75,1.8) -- (0.85,1.8);
    \draw[blue, thick] (0.75,0.8) -- (0.85,0.8);

    \draw[blue, thick] (1,2) -- (2.5,2.5) node[midway, above, pos = 0.7] {\(O(n^{-1/3})\)};
    \draw[blue, thick] (1,1.95) -- (1,2.05);
    \draw[blue, thick] (2.5,2.45) -- (2.5,2.55);
    
    \end{scope}

    \end{tikzpicture}
    \caption{Brownian Melon Scaling limit. Above is a realisation of of the \(WB^n, n=4\) melon. \textquotedblleft Zooming in \textquotedblright \space on the parallelogram with at small scales and taking the limit as \(n\to\infty\) yields the convergence in law to the (parabolic) Airy Line Ensemble.}
    \label{fig:Brownian melon}
\end{figure}
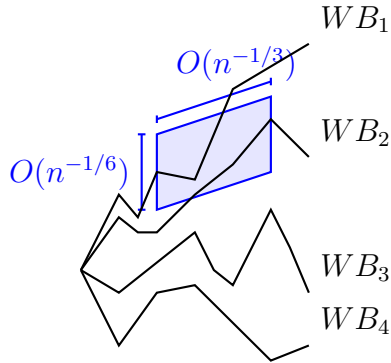

The shifted top-line of the Airy Line ensemble, \(\mathcal{A}_1(t)+t^2\) is a rather well-known process. It is stationary in time and for all \(t\in\R\), has the \textit{GUE Tracy-Widom distribution}, which is also shared with the asymptotic distribution of the rescaled height function \(h_{\epsilon}(s,0)\) for any \(s\) as \(\epsilon \to 0\) from section \ref{sec: non-gaussian}. This means that it satisfies certain explicit tail bounds, see Theorem 2.3 in \cite{DOV}.\\

Furthermore, Theorem \ref{Airy Line} shows that \(\mathbf{\mathcal{A}}_1\) is the scaling limit of the top line of a Brownian melon of \(n\) independent Brownian motions as \(n\to \infty\). Notice the spatial scaling is of order \(n^{1/3}\) and the fluctuation scaling is of order \(n^{1/6}\). This and the fact that is appears as a scaling limit of interfaces of random growth models starting from a single point \cite{sarkar2021brownian}, shows that the Airy process exhibits KPZ universality class traits.

\subsection{Brownian Gibbs Property}

\indent Below we restate the important \textbf{Brownian Gibbs Property} from \cite{sarkar2021brownian}, which elucidates the aforementioned locally Brownian nature of the Airy Line ensemble. 

\begin{theorem}[Brownian Gibbs Property]\label{thm: Brownian Gibbs}
Let \(\mathcal{A}\) denote the parabolic Airy line ensemble. For any \(k, \ell, \in\{0,1,2,\dots\}\) and \(a<b\in\mathbb{R}\), let \(\mathcal{F}\) denote the sigma algebra generated by 
\[\{\mathcal{A}_{i}(x):(i,x)\notin \{k+1, k+2, \dots, k+\ell\}\times (a,b)\}\]
Then the conditional distribution of \(\mathcal{A}_{|\{k+1, k+2, \dots, k+\ell\}\times [a,b]}\) given \(\mathcal{F}\) is given by the law of \(\ell\) independent Brownian bridges \(B_1, B_2, \dots, B_{\ell}:[a,b]\mapsto \mathbb{R}\) with diffusion parameters \(2\) such that \(B_{i}(a) = \mathcal{A}_{k+i}(a)\) and \(B_{i}(b) = \mathcal{A}_{k+i}(b)\) for all \(i = 1,2,\dots, \ell\) conditioned on the event
\[\mathcal{A}_k(r)>B_1(r)>B_2(r)>\dots >B_\ell(r)>\mathcal{A}_{k+\ell+1}(r)\]
for all \(r\) in \([a,b]\). For the above to hold in the case where \(k=0\), set \(\mathcal{A}_0\equiv \infty\).
\end{theorem}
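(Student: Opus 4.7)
The plan is to establish the Brownian Gibbs property first at the prelimit level, for the finite Brownian melon $WB^n$ (or equivalently the rescaled ensemble $A^n$ from Theorem \ref{thm: Melon scaling}), and then transfer the property to $\mathcal{A}$ by passing to the limit $n \to \infty$. At finite $n$, the Brownian melon is by construction a collection of $n$ non-intersecting trajectories built from independent Brownian motions via the Pitman/RSK-type transform, and by the classical Karlin--McGregor formula one can identify its law with that of $n$ independent Brownian motions conditioned in the Doob sense on mutual non-intersection. Having this identification, the finite-$n$ Gibbs property is essentially the strong Markov property: conditioning on the paths outside $[a,b] \times \{k+1,\dots,k+\ell\}$ fixes the endpoint values at $a$ and $b$ together with the bounding curves $\mathcal{A}_k,\mathcal{A}_{k+\ell+1}$, and the remaining $\ell$ trajectories are, by the Markov property of the underlying independent Brownian motions and the $h$-transform structure, distributed as independent Brownian bridges (with the appropriate diffusion parameter) conditioned on non-intersection with each other and the two boundary curves.

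The next step is to check that the scaling in Theorem \ref{thm: Melon scaling} is compatible with this Gibbs structure. Since $A^n_i$ is an affine rescaling of $(WB^n)_i$ with space scale $n^{1/3}$ and fluctuation scale $n^{1/6}$, linear combinations of Brownian bridges remain Brownian bridges, and a short variance computation shows that the scaling $n^{1/6}\cdot B(1+2yn^{-1/3})$ has Brownian increments of variance $4\,dy$, matching the diffusion parameter $2$ in the statement. The subtraction of the deterministic parabolic term $2\sqrt{n}+2yn^{1/6}$ does not affect the Brownian bridge nature of the conditional law, only its endpoint values. Thus $A^n$ inherits a Brownian Gibbs property with the stated diffusion parameter $2$, where the bounding curves are $A^n_k$ and $A^n_{k+\ell+1}$ (with $A^n_0 \equiv +\infty$ when $k=0$).

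The final and technically most delicate step is to pass the Gibbs property through the weak limit $A^n \Rightarrow \mathcal{A}$ in $C^\mathbb{N}$ with the product uniform-on-compact topology. The natural strategy is: take any bounded continuous test functional $F$ depending only on the restriction to $\{k+1,\dots,k+\ell\} \times [a,b]$, and any bounded continuous functional $G$ of the external data, and try to show
\begin{equation*}
\mathbb{E}\bigl[F(\mathcal{A})\,G(\mathcal{A})\bigr] \;=\; \mathbb{E}\bigl[G(\mathcal{A})\, \Phi(\mathcal{A}_k(a),\mathcal{A}_k(b),\mathcal{A}_{k+\ell+1}(a),\mathcal{A}_{k+\ell+1}(b),\mathcal{A}_k,\mathcal{A}_{k+\ell+1})\bigr],
\end{equation*}
where $\Phi$ is the expectation of $F$ under $\ell$ non-intersecting Brownian bridges with the indicated endpoints and barriers. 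At finite $n$ this identity holds by the previous step, and one then takes $n\to\infty$ on both sides using the convergence in distribution.

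The main obstacle is that the conditioning is on a null event (non-intersection over the continuum interval $[a,b]$), so $\Phi$ is defined via an $h$-transform and is not automatically continuous in its arguments. To make the limit rigorous one needs quantitative control: continuity of $\Phi$ in the bridge endpoints and in the barrier trajectories (standard once one has uniform lower bounds on the non-intersection probability), together with the tail estimates for $\mathcal{A}_1$ alluded to after Theorem \ref{thm: Melon scaling} (Tracy--Widom) to rule out pathological configurations where the barriers collide. Equipped with these, a truncation-and-continuity argument transfers the finite-$n$ identity to the limit, yielding the Brownian Gibbs property for $\mathcal{A}$. The $k=0$ case with $\mathcal{A}_0 \equiv +\infty$ is handled simultaneously since an infinite upper barrier is always inactive.
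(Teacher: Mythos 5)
The paper does not actually prove Theorem \ref{thm: Brownian Gibbs}; it is restated from \cite{sarkar2021brownian}, which in turn defers to the original result of Corwin and Hammond \cite{corwin2014brownian}. So there is no in-paper proof to compare against, and the assessment below is of your sketch on its own terms.

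Your strategy is, in outline, the one Corwin and Hammond actually use: establish a prelimit resampling (Gibbs) property for non-intersecting Brownian trajectories, check compatibility with the $n^{1/3}:n^{1/6}$ rescaling, and pass to the limit. The Karlin--McGregor/Doob $h$-transform identification of the melon with conditioned non-intersecting Brownian motions and the strong-Markov argument at finite $n$ are sound, and the observation that affine transformations preserve the Brownian-bridge structure is correct. Your scaling check lands on the right diffusion parameter, though note that an increment $dy$ in the rescaled variable corresponds to a time increment $2n^{-1/3}\,dy$, and after multiplying the height by $n^{1/6}$ the variance is $n^{1/3}\cdot 2n^{-1/3}\,dy = 2\,dy$; that is diffusion parameter $2$ directly, not $4\,dy$ (which would require the convention $\text{Var}=2Dt$).

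The genuine gap is in the passage to the limit, which you flag as the ``technically most delicate step'' but then dispose of as a truncation-and-continuity argument. Conditioning on a null event does not commute with weak convergence, and the map $\Phi$ from boundary data to the non-intersecting bridge law $\mathfrak{B}_{\bar{x},\bar{y}}^{f}$ is not automatically continuous in the barrier curve $f$ and the endpoint vectors $\bar{x},\bar{y}$ in the uniform topology. To make the limit rigorous one needs, uniformly in $n$, (i) a lower bound on the acceptance probabilities $\mathfrak{a}(\bar{x},\bar{y},f)$ away from $0$ with high probability, and (ii) quantitative continuity of $\Phi$ in its inputs. Corwin and Hammond obtain these through monotone couplings and stochastic domination for non-intersecting bridge measures, together with a one-point lower bound controlling the gaps $\mathcal{A}_{i}(y)-\mathcal{A}_{i+1}(y)$ from below; the Tracy--Widom upper-tail estimates for $\mathcal{A}_1$ that you invoke do not by themselves yield this lower-tail control on gaps. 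This machinery is where the bulk of the original proof lives, and your sketch needs to supply it (or an equivalent substitute) before it constitutes a proof.
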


\begin{remark}
    The above theorem enables for a more refined probabilistic understanding of the Airy Line ensemble. Indeed, one can quickly observe that this implies, in greater generality even, that any process satisfying the above property, particularly the Airy line ensemble is non-intersecting. Furthermore, the absolute continuity of the Airy Line ensemble with respect to the law of a Brownian Bridge with diffusion parameter \(2\) is immediate upon conditioning with respect to \(\mathcal{F}\). For a visualisation, see figure \ref{fig: Gibbs}.
\end{remark}

\begin{figure}[H]
  \centering
  
  \includegraphics[width = .5\linewidth]{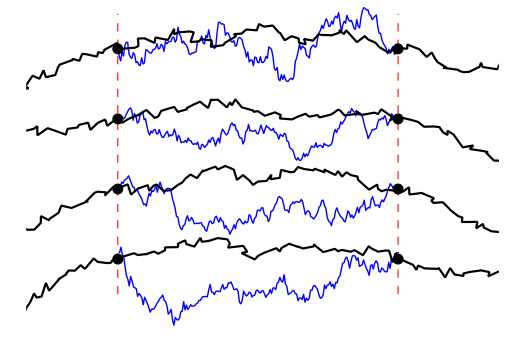}

  \caption{Visualisation of Brownian Gibbs Property on the first four lines of the parabolic Airy Line ensemble \(\mathcal{A} = \{\mathcal{A}_1>\mathcal{A}_2>\dots\}\) (in \textbf{black}) between two points (indicated by the \color{red}red \color{black} vertical dashed lines), indicated by the vertical dashed lines. The \color{blue}blue \color{black} curves are represent resampled versions of first four lines in the ensemble between the endpoints, conditioning on everything else.}
  \label{fig: Gibbs}
\end{figure}

\chapter{Directed Landscape}\label{Directed Landscape}

The Directed Landscape is the central object constructed in the DOV paper, \cite{DOV}, and its construction is in some sense, that will be elucidated below, analogous to that of Brownian motion. The underlying motif is that all of the objects constructed below are scaling limits in one way or another of Brownian last passage percolation, which was shown by \cite{nica2020one} to converge to the KPZ fixed point of Matetski, Quastel and Remenik \cite{matetski2021kpz}.

\section{Airy Sheet}\label{sec:Airy sheet}

The first step in constructing the Directed Landscape, is to start by constructing the Airy Sheet, which is stipulated to have the following characteristics.

\begin{definition}[Airy Sheet]\label{def: Airy Sheet}
    The \textbf{Airy Sheet} is a random continuous function \(\mathcal{S}:\R^2\to \R\) so that the following hold
    \begin{enumerate}
        \item Stationarity: \(\mathcal{S}\) has the same law as \(\mathcal{S}(\cdot+t, \cdot+t)\) for all \(t\in \R\)
        \item Coupling: \(\mathcal{S}\) can be coupled with a parabolic Airy line ensemble so that \(\mathcal{S}(0,\cdot) = \mathcal{A}_1(\cdot)\) and for all rational triplets \((x,y,z)\in \Q^{+}\times\Q^2\) there exist random integers \(K_{x,y,z}\) such that for all \(k\geq K_{x,y,z}\)
        \begin{equation}\label{eq: coupling}
        \mathcal{A}[(-\sqrt{k/(2x)}, k) \to (z,1) ] -  \mathcal{A}[(-\sqrt{k/(2x)}, k) \to (y,1) ]= \mathcal{S}(x, z) - \mathcal{S}(x, y).
        \end{equation}
    \end{enumerate}
\end{definition}

The coupling \ref{eq: coupling} will play a crucial role in later sections, when everything will be pieced together to prove the absolute continuity of the KPZ fixed point. The stationarity property and its coupling with last passage percolation over the Airy sheet are sufficient to characterise its law uniquely. Furthermore, one can construct the Airy Sheet as a limit of Brownian last passage percolation, as is done in \cite{DOV} by constructing the sequence of random functions from the following
\[
[\Bar{x}\to \hat{y}]_n = 2\sqrt{n}+(y-x)n^{1/6}+n^{-1/6}\mathcal{S}_n(x,y)\,,
\]  
\noindent where \(\Bar{x} = 2xn^{-1/3}\), \(\hat{y} = 1+2yn^{-1/3}\) and \([x \to y]_n\) denotes last passage percolation from \((x,n)\) to \((y,1)\) over an ensemble of \(n\) independent two-sided Brownian motions. The proof in \cite{DOV} shows tightness and uniqueness of subsequential limits of the laws of \(\mathcal{S}_n(x,y)\); uniqueness of such limits is much more involved to show, since tightness results were already known for the Airy Sheet limit for certain models \cite{pr2018local}. 

\begin{theorem}[Airy Sheet Existence and Uniqueness]\label{thm: Airy Sheet}
	\label{T:intro-sheet}
	The Airy sheet exists and is unique in law. Moreover, for every \(n\), there exists a coupling so that
	\[
	B[(2x/n^{1/3},n) \to (1+2y/n^{1/3},1)] = 2\sqrt{n} + 2(y - x)n^{1/6} + n^{-1/6} (\mathcal{S}+o_n)(x,y),
	\]
	where \(o_n\) are random functions asymptotically small in the sense that on every compact set \(K\subset \R^2\) there exists \(a>1\) with
	\(\mathbb{E} a^{\sup_K |o_n|^{3/2}}\to 1\).
\end{theorem}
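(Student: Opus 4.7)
The strategy decomposes into three pieces: establishing tightness of the laws of $\mathcal{S}_n$ on $C(\R^2,\R)$, identifying any subsequential limit via the coupling \ref{eq: coupling}, and upgrading weak convergence to the quantitative error bound on $o_n$. The starting observation is that by the $n$-line generalisation of Lemma \ref{lemma: Pitman melon} (the melon identity of \cite{o2002representation}), the Brownian last passage value from $(\bar x, n)$ to $(\hat y, 1)$ can be read off an explicit functional of the Brownian $n$-melon $WB^n$ evaluated at space points of order $n^{-1/3}$, up to deterministic linear shifts. Theorem \ref{thm: Melon scaling} then gives, along any joint subsequential limit, that $(\mathcal{S}_{n_k}, A^{n_k}) \Rightarrow (\mathcal{S}, \mathcal{A})$, with the coupling \ref{eq: coupling} expected to appear as the $k\to\infty$ limit of the finite-$n$ analogue of the difference $\mathcal{A}[(-\sqrt{k/(2x)},k)\to(z,1)]-\mathcal{A}[(-\sqrt{k/(2x)},k)\to(y,1)]$ expressed through the melon.

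For tightness of $\{\mathcal{S}_n\}$ on compact subsets of $\R^2$, I would rely on the Brownian Gibbs property (Theorem \ref{thm: Brownian Gibbs}): the top curve of the rescaled melon is locally absolutely continuous with respect to a Brownian bridge plus a parabolic drift, which furnishes a modulus-of-continuity estimate via comparison with standard Brownian motion. Quantitative control of the ambient fluctuations is supplied by the Tracy--Widom tail bounds on $\mathcal{A}_1$ and by stationarity of $t \mapsto \mathcal{A}_1(t) + t^2$. Equicontinuity in the $x$-direction is obtained by iterating the same argument on the extended last passage percolation construction underlying $\mathcal{S}_n$, and Kolmogorov's criterion then closes the tightness argument.

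Uniqueness of the subsequential limit is the real difficulty. The coupling \ref{eq: coupling} pins down $\mathcal{S}(x,z)-\mathcal{S}(x,y)$ as a measurable function of $\mathcal{A}$ alone: for fixed rational $(x,y,z)$ one takes $k \to \infty$ past the random threshold $K_{x,y,z}$. Since $\mathcal{S}(0,\cdot) = \mathcal{A}_1(\cdot)$ by Definition \ref{def: Airy Sheet}, all values of $\mathcal{S}$ are determined in law by $\mathcal{A}$ on a countable dense set, and hence on all of $\R^2$ by continuity. The technical core is producing the finite threshold $K_{x,y,z}$: geodesics in $\mathcal{A}$ from $(-\sqrt{k/(2x)},k)$ to two nearby endpoints $(z,1)$ and $(y,1)$ must \emph{coalesce} sufficiently early along their downward journey, so that the difference of their lengths stabilises as $k$ grows. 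This coalescence phenomenon, driven by the convexity of the limit shape from Theorem \ref{thm: Wedge initial data} together with refined geodesic estimates in \cite{DOV}, is the geometric heart of the theorem and I expect it to be by far the hardest step.

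Finally, the bound $\mathbb{E}\,a^{\sup_K |o_n|^{3/2}} \to 1$ is strictly stronger than convergence in law and demands tail estimates uniform in $n$. These come from combining Tracy--Widom upper and lower tail bounds (whose $3/2$ exponent is what appears in the exponent of $|o_n|$ after KPZ scaling) with the explicit coupling produced in the uniqueness step, which realises all $\mathcal{S}_n$ and $\mathcal{S}$ on a common probability space. The principal obstacle is again the coalescence argument; tightness and one-point tail bounds are now standard KPZ-class inputs, and the quantitative error extraction amounts to carefully tracking constants through the coalescence estimates once they are in hand.
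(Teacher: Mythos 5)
The essay you are reading does not actually prove Theorem \ref{T:intro-sheet}; it states the result and defers entirely to \cite{DOV}, offering only the one-sentence gloss that the proof "shows tightness and uniqueness of subsequential limits of the laws of $\mathcal{S}_n(x,y)$," with uniqueness flagged as the hard part. So there is no in-paper proof to check you against line by line. Your high-level decomposition (tightness, identification of the subsequential limit via the coupling \ref{eq: coupling}, upgrading to the quantitative $o_n$ bound) is consistent with that gloss and with the known structure of the argument in \cite{DOV}, and you correctly single out geodesic coalescence on the Airy line ensemble as the technical core.

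There is, however, a concrete gap in your uniqueness step. The coupling in Definition \ref{def: Airy Sheet} determines, for each rational $x>0$, only the \emph{differences} $\mathcal{S}(x,z)-\mathcal{S}(x,y)$ as measurable functions of $\mathcal{A}$; together with $\mathcal{S}(0,\cdot)=\mathcal{A}_1(\cdot)$, this pins down each slice $y\mapsto\mathcal{S}(x,y)$ only up to a (random, $x$-dependent) additive constant. You then assert that ``all values of $\mathcal{S}$ are determined in law by $\mathcal{A}$ on a countable dense set,'' but this does not follow: the family $\{\mathcal{S}(x,0):x>0 \text{ rational}\}$ is not yet shown to be determined by $\mathcal{A}$. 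Stationarity (property 1 of the definition) gives only the one-point marginal law $\mathcal{S}(x,0)\stackrel{d}{=}\mathcal{A}_1(-x)$, not the conditional law of $\mathcal{S}(x,0)$ given $\mathcal{A}$. In \cite{DOV} this is resolved by additional structural input beyond the raw coupling formula; your sketch jumps from ``the increments are determined'' to ``the law is determined,'' and that jump is precisely where the extra work lives. The remaining ingredients you cite --- tightness via the Brownian Gibbs property and Tracy--Widom one-point tails, the melon identity reading $\mathcal{S}_n$ off $WB^n$, and the $3/2$ exponent in $\mathbb{E}\,a^{\sup_K|o_n|^{3/2}}$ tracing back to Tracy--Widom tails under KPZ scaling --- are all plausible and in the spirit of the actual argument.
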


Noteworthy is how the \(3:2:1\), or KPZ scaling appears under rescaling of the Airy Sheet. Define the \textbf{Airy sheet of scale} \(s\) to be 
\[
\mathcal{S}_{s}(x,y) = s\mathcal{S}(x/s^2, y/s^2).
\]

\noindent Then, in Proposition 9.2 of \cite{DOV}, it is shown that if \(\mathcal{S}_s\) and \(\mathcal{S}_t\) are two independent Airy sheets of scales \(s\) and \(t\) respectively, then their metric composition, defined for any \((x,z)\in \R^2\) as 
\begin{equation}\label{eq: metric Airy}
    Q(x,z) = \sup_{y\in\R}\mathcal{S}_s(x,y)+\mathcal{S}_t(y,z)
\end{equation}

\noindent is an Airy sheet or scale \(r\), where \(r^3 = s^3+t^3\). In other words
\[
r^{-1}Q(xr^2,zr^2) = (s^3+t^3)^{-1/3}Q(xr^2,zr^2)
\]
\noindent is a standard Airy Sheet, whence the natural choice for the time : space : fluctuation scaling being \(3:2:1\) becomes apparent. This metric composition property is loosely reminiscent of how the variance of \(X_1 + X_2\) is \(\sigma_1^2+\sigma_2^2\), where \(X_1,X_2\) are independent, normally distributed with respective variances \(\sigma_1^2+\sigma_2^2\)\footnote{Observe the different exponents.}. 

\section{The Directed Landscape} 

The above suggests that metric composition can be viewed as a semigroup operation for the Airy Sheet, inspiring a definition of an analogous Brownian motion. For this, it is natural to have a parameter space internalising the spatial dependence of the Airy Sheets, being random functions, namely, the \textbf{directed $\R^4$},
$$
\mathbb R^4_\uparrow=\{(x,s;y,t)\in \R^4:s<t\}\,.
$$
Informally, $\R^4_\uparrow$ can be thought of as representing ordered pairs of points in spacetime in one dimension, where the coordinates $x$ and $y$ are spatial and the coordinates $s$ and $t$ are temporal. This leads to an important definition.

\begin{definition}\label{D:directed-landscape}The {\bf directed landscape} is a random continuous function $\mathcal{L}:\mathbb R^4_\uparrow \to \mathbb R$ satisfying the metric composition law
\begin{equation}
\label{E:dland-metric}
\mathcal{L}(x,r;y,t)=\max_{z\in \mathbb R} \mathcal{L}(x,r;z,s)+\mathcal{L}(z,s;y,t)
\end{equation}
for all \((x,r;y,t) \in  \mathbb R^4_\uparrow,\; s\in (r,t)\) and with the property that $\mathcal{L}(\cdot,t_i;\cdot,t_i+s_i^3)$
are independent Airy sheets of scale $s_i$ for any set of disjoint time intervals $(t_i,t_i+s_i^3)$.
\end{definition}

The construction of the Directed Landscape is the main result of the breakthrough DOV paper \cite{DOV}. We give a brief outline of the main steps. We focus on the first step where the comparison to the construction of Brownian motion is more apt.

\begin{enumerate}
    \item Construction on dense (dyadic times) subset of \(\R^4_\uparrow\)\label{enum: one}
    \item Extend by a.s. uniform continuity on compacts to conclude existence \label{enum: two}
    \item Obtain control of the growth of the directed landscape (shifted by a parabola) to show that it satisfies the metric composition law in full generality
    \label{enum: three}
\end{enumerate}

To make \ref{enum: one} more precise, let
\[
S_k = \{(x,t;y,s)\in \R^4: s<t, 2^kt, 2^ks\in\Z\},
\]
\noindent that is the set of points in the directed \(\R^4\) with dyadic time values of scale \(k\in\N\). It is clear that their union \(S =\bigcup_{k} S_k\) is dense in \(\R^4_\uparrow\) in the usual sense. The construction is inductive in nature and the idea is to define a collection of random functions 
\[
\mathcal{L}_k: D_k \coloneqq \{s<t: 2^kt, 2^ks\in\Z\} \longmapsto \mathcal{C}(\R^2; \R)
\]
and show that for \(k<k'\), the law of \(\mathcal{L}_k'\) restricted to \(D_k\) agrees with that of \(\mathcal{L}_k\). This consistency condition being met will then enable us to apply the classical Kolmogorov extension theorem to yield the existence of a process \(\mathcal{L}\) on 
\[
 D = \{s<t: 2^kt, 2^ks\in\Z \text{ for some } k\in \N\}.
\]
Finally, since all random functions \(D \mapsto \mathcal{C}(\R^2; \R)\) can be viewed as \(\R^S\)-valued random variables, this would finish the construction of the directed landscape on pairs of points in spacetime with dyadic time coordinates, which is dense in \(\R^4_\uparrow\) as aforementioned.\\

To define \(\mathcal{L}_k\) for \(k\in \N\), let \(B_i\) be independent Airy Sheets for \(i\in\Z\). With the definition of the directed landscape in mind, for \(t = i/2^k\) and \(s^3 = 1/2^k\) set
\[
\mathcal{L}_k(x,t;y,t+s^3) = sB_i(x/s^2, y/s^2)\,,
\]
that is, an Airy Sheet of scale \(s\). In words, for times in \(D_k\) with minimal spacing in time, one uses the above definition, being consistent with definition \ref{D:directed-landscape}. Now for times in \(D_k\) with non-minimal separation, the value of \(\mathcal{L}_k\) is that given by metric composition \ref{E:dland-metric}. More precisely, for \(j = i+\ell, \ell\geq 2\) set 
\begin{equation}\label{eq: directed comp dyadic}
\mathcal{L}_k(x,i/2^k;y, j/2^k) = \max_{(x_1,\dots, x_{\ell-1})\in\R^{\ell-1}}\displaystyle\sum_{q=1}^\ell \mathcal{L}_k\left(x_{q-1},\frac{i+q-1}{2^k};x_q, \frac{i+q}{2^k}\right)
\end{equation}
with \(x_0 = x, x_q = y\), that is an \(\ell\)-fold iteration of metric composition. Observe that for \(x,y\in \R\) for \(t = i/2^k\)
\[
\mathcal{L}_{k+1}\left(x,t;y, t+1/2^k\right) = \mathcal{L}_{k+1}\left(x,\frac{2i}{2^{k+1}};y, \frac{2i+2}{2^{k+1}}\right)\]
\[
= \max_{x_1\in\R}\left\{\displaystyle \mathcal{L}_{k+1}\left(x,\frac{2i}{2^{k+1}};x_1, \frac{2i+1}{2^{k+1}}\right)+\mathcal{L}_{k+1}\left(x_1,\frac{2i+1}{2^{k+1}};y, \frac{2i+2}{2^{k+1}}\right)\right\}
\]
which has the same law as 
\[
\max_{z\in\R}\displaystyle sB(x/s^2, z/s^2)+sB'(z/s^2, y/s^2)\,,
\]
where \(s = 1/2^{\frac{k+1}{3}}\) and \(B, B'\) are independent Airy Sheets. By the metric composition law for Airy Sheets, \ref{eq: directed comp dyadic} has the same law as an Airy Sheet of scale \(r\), where \(r^3 = 2s^3 = 1/2^k\). This turns out to have the same law as \(\mathcal{L}_{k}\left(x,t;y, t+1/2^k\right)\), by construction. This means that the consistency condition in Kolmogorov's extension theorem is satisfied. Also, the above readily implies that \(\mathcal{L}\) as constructed on \(S\) satisfies all the conditions in the definition of the directed landscape \ref{D:directed-landscape}. Uniqueness of the law of such a measure satisfying the definition \ref{D:directed-landscape} also follows from Kolmogorov's uniqueness theorem since the finite dimensional marginals are uniquely determined by the above.\\

Now, having obtained a random process \(\mathcal{L}:S\to \R\), a sufficient condition to show that it has a unique continuous extension to all of \(\R^4_\uparrow\), it suffices to show \ref{enum: two}, namely that \(\mathcal{L}\) is almost surely uniformly continuous on \(K\cap S\cap\Q^4\) for any compact \(K\subset \R^4_\uparrow\). The way the authors this in \cite{DOV} is by leveraging an explicit tail bound on two-point differences for 
\(\mathcal{L}\), which is the content of Lemma \(10.4\) therein. Briefly, to deal with \ref{enum: three}, Corollary 10.7 in \cite{DOV} gives the desired control in the growth rate of the parabolically shifted directed landscape to conclude that is satisfies the metric composition law.\\

Perhaps not too surprisingly by now, there is a connection between the Directed landscape and Brownian last passage percolation, which we phrase in the form of a theorem, Theorem 1.5 in \cite{DOV}. Letting $(x,s)_n=(s+2x/n^{1/3},-\lfloor sn\rfloor )$, the translation between limiting and pre-limiting locations, we have the following.

\begin{theorem}[Full scaling limit of Brownian last passage percolation]
\label{T:lp-limit}
There exists a coupling of Brownian last passage percolation and the directed landscape $\mathcal{L}$ so that
$$
B_n[(x,s)_n\to (y,t)_n ]\;\; = \;\;2(t-s)\sqrt{n} + 2(y - x)n^{1/6} + n^{-1/6} (\mathcal{L}+o_n)(x,s;y,t).
$$
Here each $B_n\in \mathcal{C}^{\Z}$ is an ensemble of independent two-sided Brownian motions. Each $o_n$ is a random function asymptotically small in the sense that on every compact set $K\subset \R^4_{\uparrow}$ there exists $a>1$ with
$\mathbb{E} a^{\sup_K |o_n|^{3/4}}\to 1$.
\end{theorem}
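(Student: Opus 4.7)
The proof proceeds by bootstrapping the Airy sheet convergence (Theorem \ref{T:intro-sheet}) through the metric composition law, exploiting the fact that Brownian last passage percolation satisfies metric composition \emph{exactly} by Lemma \ref{Lemma: Metric Composition}, while by construction (see \ref{eq: directed comp dyadic}) the directed landscape does so at dyadic times. The plan is first to produce the coupling on dyadic time grids and then to extend by continuity, letting the dyadic scale $k=k(n)\to\infty$ slowly with $n$.

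First, I would fix a level $k\in\N$ and tile $\R$ by dyadic time intervals $[i/2^k,(i+1)/2^k]$. For each interval Theorem \ref{T:intro-sheet}, applied to the slab of Brownian motions indexed by $\{\lfloor in/2^k\rfloor,\dots,\lfloor(i+1)n/2^k\rfloor\}$, produces a coupling of that slab's last passage field with an Airy sheet $B_i$ of scale $s=2^{-k/3}$, the per-slab error $o_n^{(i)}$ satisfying the $3/2$-exponential moment bound of that theorem. Since disjoint slabs involve disjoint Brownian motions, the $B_i$ are jointly independent, so they may be taken to be exactly the independent Airy sheets entering the definition of $\mathcal{L}_k$ given above. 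This yields on a single probability space the claimed coupling whenever $(x,s;y,t)$ spans a single slab of length $2^{-k}$.

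Second, I would extend the coupling to arbitrary dyadic pairs $(x,i/2^k;y,j/2^k)$ with $\ell:=j-i>1$. The LPP side satisfies metric composition exactly by Lemma \ref{Lemma: Metric Composition}; the directed landscape side is given by the $\ell$-fold variational formula \ref{eq: directed comp dyadic}. Expanding both as suprema over intermediate vertices $x_1,\dots,x_{\ell-1}$ and pairing them slab-by-slab, the difference is bounded in absolute value by $n^{-1/6}\sum_q|o_n^{(q)}|$ \emph{provided} the maximisers can be localised to a common compact region uniformly in $n$. Establishing this localisation is the crucial analytic input: it rests on the parabolic-growth estimates for LPP and for $\mathcal{L}$ that lie behind step \ref{enum: three} of the construction (Lemma 10.4 and Corollary 10.7 of \cite{DOV}), ensuring that sup-problems over $\R^{\ell-1}$ are effectively sup-problems over a fixed compact set with overwhelming probability. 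Once localised, a finite-sum moment estimate propagates the exponential control on each $o_n^{(q)}$ to their sum, at the price of a reduced exponent.

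Third, density of $S=\bigcup_k S_k$ in $\R^4_\uparrow$ combined with the almost-sure uniform continuity of $\mathcal{L}$ on compacts (step \ref{enum: two}) and an analogous equicontinuity estimate for the prelimit, obtained from the same two-point tail bounds, permits a Kolmogorov-type tightness argument to extend the coupling to all of $\R^4_\uparrow$; choosing $k=k(n)\to\infty$ slowly enough absorbs the dyadic discretisation error into $o_n$. The principal obstacle throughout is the localisation step: preserving stretched-exponential control of the error through the iterated maxima, without which a naive supremum over an unbounded domain would destroy the small-error guarantee. The weakening of the moment exponent from $3/2$ in Theorem \ref{T:intro-sheet} to $3/4$ in the statement here is exactly the price paid for this aggregation across many slabs.
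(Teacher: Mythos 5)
The paper states this theorem without proof, attributing it to Theorem 1.5 of \cite{DOV}; there is therefore no internal argument to compare yours against, and I assess the proposal on its own merits. Your overall strategy --- dyadic time slabs, a per-slab Airy-sheet coupling via Theorem \ref{T:intro-sheet} using independence of disjoint slabs, exact metric composition on the LPP side against the $\ell$-fold variational formula \ref{eq: directed comp dyadic} on the landscape side, localisation of maximisers, and a final continuity extension with $k=k(n)\to\infty$ slowly --- is broadly consonant with what DOV do, and the obstacles you flag (localisation, error aggregation, densification) are the right ones.

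Two steps are nevertheless too thin to close the argument as written. First, localisation. You invoke Corollary 10.7 of \cite{DOV} to confine the maximisers of the $\ell$-fold supremum to a compact set, but that bound controls $\mathcal{L}$ only. To compare the two $\ell$-fold suprema supremand-by-supremand, you also need a matching \emph{pre-limit} parabolic estimate for Brownian LPP, uniform in $n$ and in the slab index $q$, that confines the LPP maximisers to the same compact with overwhelming probability. Theorem \ref{T:intro-sheet} controls $o_n^{(q)}$ \emph{after restriction} to a compact; it does not by itself rule out the LPP maximiser wandering off to where the per-slab error bound is vacuous. That requires separate moderate-deviation input for the prelimit, and your proposal does not supply or cite it.

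Second, aggregation of errors. With $k=k(n)\to\infty$, the number of slabs $\ell \asymp 2^{k(n)}(t-s)$ composing a fixed time interval grows without bound, and summing $\ell$ independent errors each satisfying $\mathbb{E}a^{\sup|o_n^{(q)}|^{3/2}}\to 1$ does not automatically give a bound $\mathbb{E}a'^{\sup|o_n|^{3/4}}\to 1$ with $a'>1$ fixed independently of $k$. Your remark that the drop from $3/2$ to $3/4$ is ``the price paid'' for aggregation is a reasonable guess, but you do not explain why the degradation saturates rather than worsening as $k(n)$ grows. The standard remedy is to iterate \emph{binary} metric compositions --- doubling the time gap one level at a time, so that the per-step loss in the exponent is bounded and compounds only over $O(\log \ell)$ levels --- rather than composing $\ell$ slabs in a single step. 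Without this (or an equivalent quantitative control), the stretched-exponential moment bound in the statement does not follow from your estimates.
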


Having outlined the construction of the directed landscape and the Airy sheet, highlighting their important features and connections to Brownian last passage percolation, we now move onto discussion of the KPZ fixed point and how it can be described in terms of the directed landscape, bringing us closer to the proof of absolute continuity with respect to Brownian motion on compacts. More precisely, noting from \ref{D:directed-landscape}, that
\[
\mathcal{L}(x,0;y,t) = \mathcal{S}_{t^{1/3}}(x,y) = t^{1/3}\mathcal{S}(t^{-2/3}x, t^{-2/3}y), \quad t>0
\]
\noindent for an Airy Sheet \(\mathcal{S}\). Given an initial condition  \(h(0,\cdot)\), the KPZ fixed point of Matetski, Quastel and Remenik \cite{matetski2021kpz} can be expressed in terms of the directed landscape as
\begin{equation}\label{def: KPZ fixed point}
    h(t,y) = \sup_{x_0\in\R}\mathcal{L}(x,0;y,t), \quad \text{ for all } y\in \R.
\end{equation}

The construction of the directed landscape now allows for a variational description of the KPZ fixed point. This follows from \ref{eq: variation airy} in \cite{matetski2021kpz}, where the authors obtain a variational formula for the KPZ fixed point in terms of the Airy Sheet, now also shown to be unique in law, see \cite{DOV}. We now turn our attention to the main result of \cite{sarkar2021brownian}, namely, that the KPZ fixed point is absolutely continuous with respect to Brownian motion on compacts.

\chapter{Absolute continuity of Brownian last passage percolation}

We are moving towards proving the absolute continuity of Brownian last passage percolation with respect to Brownian motion. To do this, one needs to establish two preliminary results regarding the top line of the Pitman transform (\textit{melon}) \ref{Pitman} of two independent Brownian motions. To this end, we first prove the following lemma, which follows from the definition of the Pitman transform.\\

\begin{lemma}\label{lemma: Pitman transform is Markov}
    Let $(B_1,B_2)$ be a pair of two independent Brownian motions on the positive reals starting from $(0,b_1), (0,b_2)$ respectively. Then, the pair $X_t \coloneqq (WB_1(t), B_2(t))_{t\geq 0}$ is a Markov process in $t$ with respect to the filtration $(\mathcal{F}_t^{X})_{t\geq0} = (\sigma(X_s):0\leq s\leq t)_{t\geq0}$ generated by $(WB_1,B_2)$. 
\end{lemma}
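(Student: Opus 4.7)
My plan is to express $X_{t+h}$ as a deterministic measurable functional of $X_t$ together with a fresh Brownian path increment that is independent of the larger filtration $\mathcal{F}_t^{(B_1,B_2)} := \sigma(B_1(s),B_2(s):0\le s\le t)$; since $\mathcal{F}_t^X\subseteq \mathcal{F}_t^{(B_1,B_2)}$, the Markov property will then follow from a standard freezing argument. The first step is to unpack Definition \ref{Pitman}: writing $D_s := B_2(s)-B_1(s)$, $M_t := \max_{s\in[0,t]}D_s$, and $G(B_1,B_2)(0,t) = (M_t)_+$, one has $WB_1(t) = B_1(t)+(M_t)_+$, and hence
\[
V_t \;:=\; WB_1(t)-B_2(t) \;=\; (M_t)_+ - D_t,
\]
a quantity that depends only on $X_t$.

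The central step is the computation of the one-step update. Using the joint Markov property of $(B_1,B_2)$, decompose
\[
M_{t+h} \;=\; \max\!\left(M_t,\; D_t + N_h\right),\qquad N_h := \max_{s\in[0,h]}\tilde D_s,
\]
where $\tilde D_s := (B_2(t+s)-B_2(t)) - (B_1(t+s)-B_1(t))$ is a fresh centred Brownian motion of variance $2s$, independent of $\mathcal{F}_t^{(B_1,B_2)}$. I would then split by sign of $M_t$: if $M_t\geq 0$ then $(M_t)_+ = M_t$ and $V_t = M_t-D_t$, so $(M_{t+h})_+ - (M_t)_+ = \max(M_t,D_t+N_h) - M_t = (N_h-V_t)_+$; if $M_t<0$ then $(M_t)_+ = 0$ and $V_t = -D_t$, so $(M_{t+h})_+ - (M_t)_+ = (D_t+N_h)_+ = (N_h-V_t)_+$. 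In both cases the update collapses to the single identity
\[
WB_1(t+h) \;=\; WB_1(t) + (B_1(t+h)-B_1(t)) + (N_h - V_t)_+ .
\]

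Together with $B_2(t+h) = B_2(t) + (B_2(t+h)-B_2(t))$, this exhibits $X_{t+h} = \Psi\!\left(X_t,\; (B_i(t+\cdot)-B_i(t))_{i=1,2}\big|_{[0,h]}\right)$ for a measurable $\Psi$. Since the pair of fresh paths is independent of $\mathcal{F}_t^{(B_1,B_2)}\supseteq \mathcal{F}_t^X$ and $X_t$ is $\mathcal{F}_t^X$-measurable, for any bounded measurable $f$ a standard freezing argument yields $\mathbb{E}[f(X_{t+h})\mid\mathcal{F}_t^X] = g(X_t)$ with $g(x) := \mathbb{E}[f(\Psi(x,\tilde B_1,\tilde B_2))]$, which is the Markov property. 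The main obstacle is the collapse of the two sign cases for $M_t$ into the single expression $(N_h-V_t)_+$ depending only on the $X_t$-measurable quantity $V_t = WB_1(t)-B_2(t)$: without this cancellation the update rule would involve $M_t$ or $B_1(t)$ separately, neither of which is in general recoverable from $X_t$ alone, and the Markov property with respect to the natural filtration of $X$ would fail.
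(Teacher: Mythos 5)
Your proof is correct and follows essentially the same route as the paper: both express the future value of $(WB_1,B_2)$ as a measurable function of its present value together with Brownian-path increments independent of $\mathcal{F}_t^{(B_1,B_2)}\supseteq\mathcal{F}_t^X$, and then apply the standard freezing argument. The only difference is presentational --- you isolate the scalar $V_t=WB_1(t)-B_2(t)$ and do a short case split on the sign of $M_t$, whereas the paper absorbs $B_1(s)$ by pushing it through the maximum and using $\max\bigl(\max_{r\le s}(B_2-B_1)(r),0\bigr)=WB_1(s)-B_1(s)$; both reductions amount to the same cancellation of the non-$X$-measurable quantity $B_1$ (equivalently, $M_t$) from the update rule.
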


\begin{proof}
    Let $f:\R^2\to\R$ be a bounded Borel-measurable function, fix $0\leq s<t$ and consider
    \[
    \mathbb{E}[f(WB_1(t),B_2(t))|\mathcal{F}_s^{X}]
    \]
    \[
    = \mathbb{E}\left[f\left(B_1(t)+\max\left(\max_{0\leq r\leq t}(B_2(r)-B_1(r)),0\right),B_2(t)\right)|\mathcal{F}_s^{X}\right]
    \]
    \begin{equation*}
    \begin{aligned}
    = \mathbb{E}\Bigl[f\Bigl(B_1(s)+\Delta_1B+\max\left(\max_{0\leq r\leq s}(B_2(r)-B_1(r)),
    \max_{s\leq r\leq t}(B_2(r)-B_1(r)),0\right),\\
    B_2(t)\Bigr)|\mathcal{F}_s^{X}\Bigr]
    \end{aligned}
    \end{equation*}
    where $\Delta_1B = B_1(t)-B_1(s)$ and $\Delta_2B = B_2(t)-B_2(s)$. Now,
    \begin{equation*}
    \begin{aligned}
        = \mathbb{E}\Bigl[f\Bigl(B_1(s)+\Delta_1B+\max\left(WB_1(s)-B_1(s),
        \max_{s\leq r\leq t}(B_2(r)-B_1(r))\right),\\
        B_2(s)+\Delta_2B\Bigr)|\mathcal{F}_s^{X}\Bigr]\,.
    \end{aligned}
    \end{equation*}
    Pushing the $B_1(s)$ term through the maximum gives
    \[
    = \mathbb{E}\left[f\left(\Delta_1B+\max\left(WB_1(s),
    M_s^t+B_2(s)\right), B_2(s)+\Delta_2B\right)|\mathcal{F}_s^{X}\right]\,,
    \]
    where $M_s^t = \displaystyle\max_{s\leq r\leq t}((B_2(r)-B_2(s))-(B_1(r)-B_1(s))), \Delta_1B, \Delta_2B \indep \mathcal{F}_s^{X}$ by the Markov property of Brownian motion. We thus conclude
    \[
     \mathbb{E}[f(WB_1(t),B_2(t))|\mathcal{F}_s^{X}] = g(WB_1(s), B_2(s)) = \mathbb{E}[f(WB_1(t),B_2(t))|X_s]\,,
    \]
    where $g:\R^2\to \R$ equals
    \[
    g(x,y) = \mathbb{E}\left[f\left(\Delta_1B+\max\left(x,
    M_s^t+y\right), y+\Delta_2B\right)\right]
    \]
    by independence, thereby concluding the proof that $(WB_1, B_2)$ is a Markov process. 
\end{proof}

Now, Lemma \ref{lemma: Pitman transform is Markov} combined with the general theory of Markov processes, yield the following lemma, Lemma $4.1$ in \cite{sarkar2021brownian}. 

\begin{lemma}\label{lemma: top line Pitman}
    Let $0<L<R$ and $b_1<b_2\in\R$. Let $B_1, B_2$ be two independent Brownian motions on the positive reals starting from $(0,b_1), (0,b_2)$ respectively. Let $WB_1(\cdot)$ be the top line of the Pitman transform \ref{Pitman} of the pair $(B_1, B_2)$. The, the law of $WB_1$ on the interval $[L,R]$ is absolutely continuous with respect to the law of a standard Brownian motion starting from the origin, again restricted to $[L,R]$.
\end{lemma}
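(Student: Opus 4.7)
The natural strategy is to combine the Markov property of $(WB_1, B_2)$ from the preceding lemma with a decomposition of the law of $WB_1|_{[L,R]}$ according to whether the running maximum of $B_2 - B_1$ sets a new record inside the observation window $[L,R]$, and to establish absolute continuity on each piece of this decomposition.

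Conditioning on $\mathcal{F}_L^X$ and reading off the computation in the previous lemma's proof, set $a = WB_1(L),\; b = B_2(L),\; c = a-b$; then for $u\in[0,R-L]$
\[
WB_1(L+u) \;=\; a + \tilde{B}_1(u) + \max\bigl(0,\, \tilde{M}_0^u - c\bigr),
\]
where $\tilde{B}_1,\tilde{B}_2$ are fresh independent standard Brownian motions from $0$ (independent of $\mathcal{F}_L^X$) and $\tilde{M}_0^u = \max_{r \in [0,u]}(\tilde{B}_2(r) - \tilde{B}_1(r))$. Almost-sure strict positivity of $c$ follows from the fact that the running maximum of the independent-increment process $B_2-B_1$ on $[0,L]$ is attained at a unique point interior to $[0,L]$, so equality of that maximum with the endpoint value $B_2(L)-B_1(L)$ is a null event.

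Next I would analyse the positive-probability event $A = \{\tilde{M}_0^{R-L} < c\}$ (positivity follows from $c>0$). On $A$ the reflection correction vanishes identically on $[0,R-L]$, so $WB_1|_{[L,R]} = a + \tilde{B}_1(\cdot - L)$, i.e.\ $WB_1|_{[L,R]}$ is standard Brownian motion on $[L,R]$ started from $a = WB_1(L)$. Since $WB_1(L) = B_1(L) + G(B_1,B_2)(0,L)$ inherits a density on $\R$ from the Gaussian density of $B_1(L)$, the sub-probability measure obtained by restricting the law of $WB_1|_{[L,R]}$ to $A$ is absolutely continuous with respect to the law of standard Brownian motion from the origin restricted to $[L,R]$ (both having Gaussian-plus-Wiener structure at time $L$).

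For the complementary event $A^c$ I would apply the strong Markov property at $\tau = \inf\{u \geq 0 : \tilde{M}_0^u \geq c\}$. A short computation using $\tilde{M}_0^{\tau + s} = c + \max_{v\in[0,s]}(\hat{B}_2(v) - \hat{B}_1(v))$ (with $\hat{B}_i$ the post-$\tau$ increments) shows that for $u \geq \tau$,
\[
WB_1(L+u) \;=\; a + \tilde{B}_1(\tau) + W\hat{B}_1(u-\tau),
\]
where $W\hat{B}_1$ is the top Pitman line of $(\hat{B}_1,\hat{B}_2)$ starting at $(0,0)$. One then couples the pre-$\tau$ Brownian piece with the post-$\tau$ Pitman piece using the joint density of $(\tau,\tilde{B}_1(\tau))$ together with a Girsanov/Bessel-type absolute continuity statement for $W\hat{B}_1$ with respect to Brownian motion on intervals strictly bounded away from its starting time. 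Adding the contributions from $A$ and $A^c$ gives the result.

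The principal difficulty is step four: the Pitman top line $W\hat{B}_1$ exhibits Bessel-like behaviour at its starting time, which is mutually singular with respect to standard Brownian motion there. The key smoothing ingredient is the diffuseness of the random start time $\tau \in (0,R-L)$ together with that of $\tilde{B}_1(\tau)$; averaging against these laws is what must be exploited to turn the pointwise-singular conditional distributions into an unconditional distribution absolutely continuous with respect to standard Brownian motion on the fixed interval $[L,R]$.
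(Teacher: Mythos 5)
Your Markov decomposition at time $L$ is correct, and so is the resulting formula
\[
WB_1(L+u) = a + \tilde{B}_1(u) + \bigl(\tilde{M}_0^u - c\bigr)^+, \qquad a = WB_1(L),\; c = WB_1(L)-B_2(L),
\]
as is the observation that $c>0$ almost surely. The argument on the event $A=\{\tilde{M}_0^{R-L}<c\}$ is essentially sound, though the claim that $WB_1(L)$ ``inherits a density from the Gaussian density of $B_1(L)$'' is too quick: the term $G(B_1,B_2)(0,L)$ depends on $B_1$, so this is not a sum of $B_1(L)$ with something independent of it. The standard fix is to rotate to the independent pair $U=B_1+B_2$, $V=B_2-B_1$ and write $WB_1(L)=\tfrac12 U(L) + \bigl(-\tfrac12 V(L)+\max_{[0,L]} V\bigr)$; conditioning on $V$, the first summand is Gaussian and contributes a density.

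The genuine gap is the event $A^c$. You correctly identify that the post-$\tau$ piece has a Bessel-type singularity at the random time $\tau$, and you are right to suspect that the diffuseness of $\tau$ is what must rescue matters, but the final paragraph is a heuristic, not a proof. For \emph{every fixed} $t$, the conditional law of $WB_1|_{[L,R]}$ given $\{\tau=t\}$ is mutually singular with Wiener measure on $[L,R]$ (precisely because of the Bessel behaviour you describe), and a mixture $\int \mu_t\, q(t)\,dt$ of mutually singular measures is not automatically absolutely continuous: one needs quantitative control of how the singular parts move as $t$ varies, and this is exactly what is not supplied. ``Coupling the pre-$\tau$ Brownian piece with the post-$\tau$ Pitman piece using the joint density of $(\tau,\tilde B_1(\tau))$'' is a description of the difficulty, not a resolution of it.

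What is missing is a mechanism that yields absolute continuity for every fixed $c>0$ without conditioning on $\tau$ at all. The essay does not prove the lemma -- it cites Lemma~4.1 of \cite{sarkar2021brownian} together with ``the general theory of Markov processes'' -- and the intended route is to exploit the explicit transition kernel of the Markov pair $(WB_1,B_2)$ from Lemma~\ref{lemma: Pitman transform is Markov} (essentially a Karlin--McGregor/reflected-kernel computation), which compares directly to the Gaussian transition density and makes the Radon--Nikodym derivative on $[L,R]$ manifestly integrable. Alternatively one can argue at the level of the Doob--Meyer decomposition of the increment process $g=\tilde B_1 + (\tilde M-c)^+$ in its \emph{own} filtration $\mathcal{F}^g$: the singular increasing part is not $\mathcal{F}^g$-adapted, and its $\mathcal{F}^g$-predictable compensator (which is what Girsanov sees) is absolutely continuous when $c>0$. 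Either way, some such input is required; as written, the proposal stops at precisely the step that carries the content of the lemma.
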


Lemma \ref{lemma: top line Pitman} enables the proof of the absolute continuity of Brownian last passage percolation with respect to standard Brownian motion. In particular, the proof of the theorem below will rely on an induction argument, wherein the aforementioned lemma will establish the induction step as will become clear below.

\begin{theorem}[Absolute continuity of Brownian last passage percolation]\label{thm: abs cont brownian lpp}
    For any $y_0\in\R_+$ and $r\in(0,y_0)$, with $\mu_r$ denoting the law of a Brownian motion starting from the origin with diffusion parameter $2$ on the interval $[r,y_0]$. Let $\mathbf{\mathcal{B}} = (\mathcal{B}_1, \mathcal{B}_1, \cdots)\in\mathcal{C}^{\N}$ be an ensemble of independent standard Brownian motions with diffusion parameter $2$. Fix $k\in\N$ and $\mathbf{g}=(g_1,g_2,\cdots, g_k)\in\R^k$ and let $\xi_{k,\mathbf{g},r}$ denote the law
    \[
     \mathcal{H}_{k,\mathbf{g}}(y) \coloneqq \displaystyle\max_{1\leq \ell\leq k}(g_\ell+\mathbf{\mathcal{B}}[(0,\ell)\to(y,1)])\,.
    \]
    Then for all $k\in\N$, $\mathbf{g}=(g_1,g_2,\cdots, g_k)\in\R^k$ and $r\in(0,y_0)$,\footnote{$\lll$ \text{ denotes absolute continuity from here onwards.}}
    \[
    \xi_{k,\mathbf{g},r}\lll \mu_r\,.
    \]
\end{theorem}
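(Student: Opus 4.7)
The plan is to proceed by induction on $k$.

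For the base case $k=1$, $\mathcal{H}_{1,(g_1)}(y) = g_1 + \mathcal{B}_1(y)$ is a Brownian motion of diffusion parameter $2$ shifted by the constant $g_1$, so on $[r,y_0]$ its law and $\mu_r$ are Gaussian measures on $C([r,y_0],\R)$ with identical covariance structure and means differing only by the constant $g_1$; as the one-dimensional marginals at $y=r$ are non-degenerate Gaussians supported on $\R$, the laws are mutually absolutely continuous.

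For the inductive step I would apply the metric composition law (Lemma~\ref{Lemma: Metric Composition}) at line $2$: for $\ell\geq 2$, $\mathbf{\mathcal{B}}[(0,\ell)\to(y,1)] = \sup_{z\in[0,y]}(\mathbf{\mathcal{B}}[(0,\ell)\to(z,2)] + \mathcal{B}_1(y)-\mathcal{B}_1(z))$. Substituting into the definition of $\mathcal{H}_{k,\mathbf g}$ and rearranging,
\[
\mathcal{H}_{k,\mathbf{g}}(y) = \max\!\Bigl(g_1+\mathcal{B}_1(y),\;\sup_{z\in[0,y]}\bigl(\tilde h(z)+\mathcal{B}_1(y)-\mathcal{B}_1(z)\bigr)\Bigr),
\]
where $\tilde h(z) := \max_{2\le \ell\le k}(g_\ell+\mathbf{\mathcal{B}}[(0,\ell)\to(z,2)])$ depends only on $(\mathcal{B}_2,\dots,\mathcal{B}_k)$ and is therefore independent of $\mathcal{B}_1$. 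Comparison with Definition~\ref{Pitman} identifies $\mathcal{H}_{k,\mathbf{g}} = W(g_1+\mathcal{B}_1,\tilde h)_1$, the top line of the two-line Pitman transform of $(g_1+\mathcal{B}_1,\tilde h)$. Relabeling $(\mathcal{B}_2,\dots,\mathcal{B}_k)$ as a fresh $(k-1)$-ensemble, $\tilde h$ has the distribution of $\mathcal{H}_{k-1,(g_2,\dots,g_k)}$, so the induction hypothesis gives $\tilde h|_{[r,y_0]}\lll \mu_r$.

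What remains is an extension of Lemma~\ref{lemma: top line Pitman} in which the ``second'' curve is not a Brownian motion on $[0,y_0]$ but is merely independent of the first and absolutely continuous with respect to $\mu_r$ on $[r,y_0]$. Conditioning on $\tilde h$ reduces this to the case where the second curve is a given deterministic continuous function, and then the following observation handles the restricted interval: writing $M_r := \sup_{s\in[0,r]}(\tilde h(s)-g_1-\mathcal{B}_1(s))$, $\tilde B(u) := \mathcal{B}_1(r+u)-\mathcal{B}_1(r)$, and $E := g_1 + \mathcal{B}_1(r) + \max(0,M_r)$, a direct computation from Definition~\ref{Pitman} yields
\[
W(g_1+\mathcal{B}_1,\tilde h)_1(r+u) = E + W\!\bigl(\tilde B,\;\tilde h(r+\cdot)-E\bigr)_1(u), \qquad u\in[0,y_0-r],
\]
with $\tilde B$ a standard BM independent of $(E,\tilde h|_{[r,y_0]})$. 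Thus the entire dependence on $\tilde h|_{[0,r]}$ collapses to the single scalar $M_r$, whose joint law with $\mathcal{B}_1(r)$ has a continuous density. Combined with the Markov property of the Pitman transform (Lemma~\ref{lemma: Pitman transform is Markov}) and transition-density arguments of the same flavor as in the proof of Lemma~\ref{lemma: top line Pitman}, this gives conditional absolute continuity on $[r,y_0]$, which after integrating over $(E,\tilde h|_{[r,y_0]})$ via the induction hypothesis yields $\xi_{k,\mathbf{g},r}\lll\mu_r$.

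The main obstacle is precisely the mismatch between the interval $[r,y_0]$ on which the induction hypothesis provides absolute continuity and the interval $[0,y_0]$ on which the Pitman transform a priori depends on $\tilde h$. Compressing the $[0,r]$-dependence into the scalar $M_r$ and then adapting the Markov-based proof of Lemma~\ref{lemma: top line Pitman} from ``two independent Brownian motions'' to the ``one Brownian motion plus one independent continuous curve'' setting is where the core technical difficulty lies; once this extended Pitman-transform absolute-continuity statement is in hand, the induction closes routinely.
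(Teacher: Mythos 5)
Your overall strategy coincides with the paper's: induction on $k$, with the base case $k=1$ immediate, and the inductive step carried out by applying the metric composition law at line $2$ and then recognising $\mathcal{H}_{k,\mathbf{g}}$ as the top line of the Pitman transform of $(g_1+\mathcal{B}_1,\tilde h)$, where $\tilde h$ is a $\mathcal{H}_{k-1}$-type process independent of $\mathcal{B}_1$. Your algebraic decomposition
\[
W(g_1+\mathcal{B}_1,\tilde h)_1(r+u)=E+W\bigl(\tilde B,\tilde h(r+\cdot)-E\bigr)_1(u)
\]
is a correct identity and the independence of $\tilde B$ from $(E,\tilde h|_{[r,y_0]})$ holds; this is an appealing way to try to package the $[0,r]$-dependence.

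However, the step where you ``condition on $\tilde h$, reducing to the case where the second curve is a given deterministic continuous function'' and then appeal to transition-density arguments contains a genuine gap. Conditionally on a \emph{deterministic} second curve $g$, the top line of the Pitman transform of a Brownian motion with respect to $g$ is in general \emph{not} absolutely continuous with respect to Wiener measure on an interval $[0,L]$. The cleanest counterexample is $g\equiv 0$: then $W(B,0)_1(u)=B(u)-\min_{s\le u}B(s)$, which by L\'evy's theorem has the law of $|B|$ on $[0,L]$; this law is concentrated on $\{f\ge 0\}$, a Wiener-null set. The difficulty persists for $g(0)<0$: letting $\tau=\inf\{u:B(u)=g(u)\}$, on the positive-probability event $\{\tau<L/2\}$ the Pitman path stays $\ge g$ on $[\tau,\tau+L/2]$, while under Wiener measure a path hitting $g$ at $\tau$ almost surely crosses $g$ immediately. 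Thus the set $A=\{f:\tau_f<L/2,\ f\ge g \text{ on }[\tau_f,\tau_f+L/2]\}$ is Wiener-null but has positive Pitman mass, and the conditional absolute continuity you invoke fails. The constraint $W(\tilde B,g)_1\ge g$ simply cannot be washed out once the second curve is frozen. The paper avoids this by never conditioning on the full second curve: it invokes the induction hypothesis on the strictly \emph{larger} interval $[r/2,y_0]$ (i.e., $\xi_{k-1,\mathbf{g}',r/2}\lll\mu_{r/2}$) and then applies Corollary~4.2 of \cite{sarkar2021brownian}, whose whole point is that the \emph{randomness} of the second curve on the buffer $[r/2,r]$, together with its absolute continuity there, furnishes the missing density at time $r$. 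This ``shrinking interval'' device is the key ingredient your proposal does not replicate; without it, or without an explicit argument that integrates over the law of $\tilde h$ rather than freezing it, the inductive step does not close.
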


\begin{proof}[Proof (Proceed via induction)]
$ $\newline

\noindent\underline{Case $k=1$:} Fix $g_1\in\R$ and $r\in(0,y_0)$. The claim follows immediately since 
\[
\mathcal{H}_{1,g_1}(y) = g_1+\mathbf{\mathcal{B}}[(0,1)\to(y,1)] = g_1+\mathcal{B}_1(y)\,,
\]
where $\mathcal{B}_1(y)$ is a Brownian motion with diffusion parameter $2$. 

\noindent\underline{Case $k=2$:} This essentially follows from lemma \ref{lemma: top line Pitman}. Indeed, from lemma \ref{lemma: Pitman melon} and that $\mathcal{B}_1(0) = \mathcal{B}_1(0)$, we observe that
\[
\mathcal{H}_{2,\mathbf{g}}(y) = \displaystyle\max_{1\leq \ell\leq 2}(g_\ell+\mathbf{\mathcal{B}}[(0,\ell)\to(y,1)]) = \displaystyle\max_{1\leq \ell\leq 2}(g_\ell+\mathbf{\hat{\mathcal{B}}}[(0,\ell)\to(y,1)])= W\hat{\mathcal{B}}_1(y)\,,
\]
where $\hat{\mathcal{B}} = (g_1+\mathcal{B}_1, g_2+\mathcal{B}_2)$, that is, an ensemble of two independent Brownian motions started from $(0,g_1)$ and $(0,g_2)$ with diffusion parameter $2$. Now applying Lemma \ref{lemma: top line Pitman} with $L = r, R = y_0$, we get $\xi_{2,\mathbf{g},r}\lll\mu_r$.

\noindent\underline{Case $k\geq 3$:} Here the idea is to exploit the associativity of the $\max(\cdot,\cdot)$ function and reduce the problem to the case for two lines. Suppose the claim holds for $k-1\in\N$, that is $\xi_{k-1, \mathbf{g},r}\lll\mu_r$ for all $\mathbf{g}\in\R^{k-1}$ and $r\in(0,y_0)$. We have
\[
\mathcal{H}_{k,\mathbf{g}}(y) = \max\Bigl(g_1+\mathcal{B}_1(y), \displaystyle\max_{2\leq \ell\leq k}\Bigl(g_\ell+\mathbf{\mathcal{B}}[(0,\ell)\to(y,1)]\Bigr)\Bigr)\,.
\]
Observe that the metric composition law \ref{Lemma: Metric Composition} gives 
\[
\mathbf{\mathcal{B}}[(0,\ell)\to(y,1)] = \displaystyle\sup_{0\leq t\leq y}(\mathbf{\mathcal{B}}[(0,\ell)\to(t,2)]+\mathcal{B}_1(y)-\mathcal{B}_1(t)).
\]
Now, we compute
\[
\displaystyle\max_{2\leq \ell\leq k}(g_\ell+\mathbf{\mathcal{B}}[(0,\ell)\to(y,1)])
\]
\[
= \displaystyle\max_{2\leq \ell\leq k}\Bigl(g_\ell+\displaystyle\sup_{0\leq t\leq y}\Bigl(\mathbf{\mathcal{B}}[(0,\ell)\to(t,2)]+\mathcal{B}_1(y)-\mathcal{B}_1(t)\Bigr)\Bigr)
\]
\[
= \displaystyle\sup_{0\leq t\leq y}\Bigr(\mathcal{H}_{k-1,\mathbf{g}'}'(t)+\mathcal{B}_1(y)-\mathcal{B}_1(t)\Bigl)\]
\[
= \mathcal{H}_{k-1,\mathbf{g}'}'(0)+\displaystyle\sup_{0\leq t\leq y}\Bigr(\mathcal{H}_{k-1,\mathbf{g}'}'(t)-\mathcal{H}_{k-1,\mathbf{g}'}'(0)+\mathcal{B}_1(y)-\mathcal{B}_1(t)\Bigl)\,,
\]
where $\mathbf{g}' = (g_2,\cdots, g_k)\in\R^{k-1}$ and $\mathcal{H}_{k-1,\mathbf{g}'}'(y) \coloneqq \displaystyle\max_{2\leq \ell\leq k}(g_\ell+\mathbf{\mathcal{B}}[(0,\ell)\to(y,2)])$. Notice that this corresponds to precisely last passage percolation on the two-line ensemble $\mathcal{L} = (\mathcal{B}_1, \mathcal{H}_{k-1,\mathbf{g}'}')$, that is
\[
\displaystyle\max_{2\leq \ell\leq k}(g_\ell+\mathbf{\mathcal{B}}[(0,\ell)\to(y,1)]) = \mathcal{H}_{k-1,\mathbf{g}'}'(0) + \mathcal{L}[(0,2)\to(y,1)].
\]
Hence, 
\[
\mathcal{H}_{k,\mathbf{g}}(y) = \max\Bigl(g_1+\mathcal{B}_1(y), \mathcal{H}_{k-1,\mathbf{g}'}'(0) + \mathcal{L}[(0,2)\to(y,1)]\Bigr)
\]
\[
= \max\Bigl(g_1+\mathcal{B}_1(y), \mathcal{H}_{k-1,\mathbf{g}'}'(0) + \mathcal{L}[(0,2)\to(y,1)]\Bigr)
\]
\[
= \max\Bigl(g_1+\mathcal{L}[(0,1)\to(y,1)], \mathcal{H}_{k-1,\mathbf{g}'}'(0) + \mathcal{L}[(0,2)\to(y,1)]\Bigr)
\]
\[
= W\hat{\mathcal{L}}_1(y)\,,
\]
where $\hat{\mathcal{L}} = (g_1+\mathcal{B}_1, \mathcal{H}_{k-1,\mathbf{g}'}')$, using the fact that $\mathcal{B}_1(y) = \mathbf{\mathcal{B}}[(0,1)\to(y,1)] = \mathcal{L}[(0,1)\to(y,1)]$ and Lemma \ref{lemma: Pitman melon}. In other words, $\mathcal{H}_{k,\mathbf{g}}$ is the top line of the melon of $\hat{\mathcal{L}}$. Since $\mathcal{B}_1 \indep \mathcal{H}_{k-1,\mathbf{g}'}'$ \footnote{From here onwards $\indep$ stands for the independence of sigma algebras (here generated by the two random variables).} as $\mathcal{H}_{k-1,\mathbf{g}'}'$ only depends on $\mathcal{B}_2,\cdots, \mathcal{B}_k$. Thus, by the induction hypothesis, $\xi_{k-1, \mathbf{g}',r/2} \lll \mu_{r/2}$ where the $\xi_{k-1, \mathbf{g}',r/2}$ is law of $\mathcal{H}_{k-1,\mathbf{g}'}'$ restricted to the interval $[r/2, y_0]$.\\

Note that the obstruction to directly applying Lemma \ref{lemma: top line Pitman} and concluding the proof is that the ensemble $\mathbf{\mathcal{L}}$ does not comprise of two independent Brownian motions. The second line is some continuous process, that nevertheless was just established to be absolutely continuous on the smaller interval $[r/2, y_0]$ with respect to the law of a standard Brownian motion starting from the origin restricted to $[r/2, y_0]$. However, with little more work, that is Corollary $4.2$ in \cite{sarkar2021brownian}, the above is sufficient to conclude that  
\[
\mathcal{L}(W\hat{\mathcal{L}}_1|_{[r/2, y_0]})\lll\mu_r\,,
\]
which concludes the proof.

\end{proof}

\chapter{The KPZ fixed point is Brownian on compacts}

The statement of the theorem requires unpacking and making precise some terms such as the type of initial condition \(h_0\) for which the KPZ fixed point is defined, and the precise meaning of the phrase \textit{Brownian on compacts}. To this end, we make two important definitions, following \cite{sarkar2021brownian}.\\

A function $f:\R\to \R\cup \{-\infty\}$ will be called a $t$-{\bf finitary initial condition} if $f(x)\neq -\infty$ for some $x$, $f$ is bounded from above on any compact interval and asymptotically
\begin{equation}\label{eq: t-finitary}
\frac{f(x)-x^2/t}{|x|}\to -\infty, \quad \text{ as } x\to \infty.
\end{equation}
A finitary initial condition (for some, or equivalently, all $t$) will be said to be {\bf compactly defined} if $f(x)=-\infty$ outside a compact set.\\

The name of this definition comes from the fact that for a $t_0$-finitary initial condition $h_0$, the KPZ fixed point, as defined in \ref{def: KPZ fixed point} is finite for all $y\in\R,t\in(0,t_0]$. This essentially follows from the growth estimate of the directed landscape $\mathcal{L}$, Corollary 10.7 in \cite{DOV}\\
\begin{equation}\label{eq: directed landscape growth estimate}
    |\mathcal{L}(x,0;y,t) + (x-y)^2/t| \leq C(1+|x|^{1/5}+|y|^{1/5}+|t|^{1/5})\theta(t)
\end{equation}
for $x,y\in\R$, where $C>0$ is a random (a.s. finite) constant and $\theta(t) = t^{1/3}\vee \log_+^{4/3} (1/t)$ is a positive continuous function. For $(y,t)\in(0,t_0]$, the bound becomes
\begin{equation}\label{eq: directed l bound}
|\mathcal{L}(x,0;y,t) + (x-y)^2/t| \leq C'(1+|t_0|^{1/5}) + C'|x|^{1/5}+ C'|y|^{1/5}\,,
\end{equation}
with $C' = C\sup_{t\in[0,t_0]}\theta(t)$. Thus, we obtain for $h_t(y) = \sup_{x\in\R}\{\mathcal{L}(x,0;y,t)+h_0(x)\}$
\[
 h_t(y)  \leq \sup_{x\in\R}\left\{\mathcal{L}(x,0;y,t)+\frac{(x-y)^2}{t}+h_0(x)-\frac{x^2}{t}+\frac{2xy}{t}-\frac{y^2}{t}\right\}
\]
and the bound \ref{eq: directed l bound} yields
\[
h_t(y)\leq C'((1+|t_0|^{1/5})+|y|^{1/5})-\frac{y^2}{t}+ \sup_{x\in\R}\left\{h_0(x)-\frac{x^2}{t_0}+\frac{2xy}{t}+C'|x|^{1/5}\right\} 
\]
for all $(y,t)\in\R\times(0,t_0]$. Notice that the right hand must always be finite, being the supremum of a function, bounded above on compacts which tends to $-\infty$ as $|x|\to \infty$, since $h_0$ was assumed $t_0$-finitary. This gives that almost surely $h_t(y)$ is finite for all $y\in\R,t\in(0,t_0]$, as claimed. One can do slightly more when restricted to compact sets. Indeed, for any compact $K\subset\R\times (0,t_0]$ and $(y,t)\in K$
\newpage
\[
h_t(y) \leq C'((1+|t_0|^{1/5})+\text{diam}(K)^{1/5}) + \sup_{x\in\R}\left\{h_0(x)-\frac{x^2}{t_0}+\frac{2|x|\text{diam}(K)}{\tau}+C'|x|^{1/5}\right\}\,, 
\]
where $M>0$ is a fixed positive number such that $K\subset B_M\times[\tau,T], B_M \coloneqq\{x\in\R^2:|x|\leq M\}$ for $0<\tau\leq T\leq t_0$, which exists by compactness. This bound on the right hand side is uniform in $K$ and converges to $-\infty$ as $|x|\to\infty$. One can also obtain in an almost identical fashion a uniform lower bound for $h_t(y)$ on $K$. Thus, for $(y,t)\in K$ the function
\[ 
x\longmapsto \mathcal{L}(x,0;y,t)+h_0(x)
\]
attains its supremum on a random interval $[L,R]$ that depends on $K$ and $C'$, leading to 
\begin{equation}\label{eq: compact containment}
h_t(y) = \sup_{x\in[L,R]}(\mathcal{L}(x,0;y,t)+h_0(x)), \quad (y,t)\in K.
\end{equation}

Examples of functions that satisfy condition \ref{eq: t-finitary} for all \(t>0\) are the initial conditions in the space of upper semicontinuous functions $UC$
\begin{equation}\label{eq: UC}
    \mathfrak{h}:\R \to [-\infty, \infty) \text{ with } \mathfrak{h}(x)\leq \alpha+\gamma|x| \text{ for finite } \alpha, \gamma \text{ and } \mathfrak{h}\not \equiv -\infty\,,
 \end{equation}
for which the KPZ fixed point was established in \cite{matetski2021kpz}. Additionally, any function $f\not \equiv -\infty$, satisfying $f(x)\leq C+x^2/t-|x|^{\beta}\log(1+|x|)$ for some $C>0$ and $\beta\in[1,\infty)$, is a $t$-finitary initial condition for any $t$ positive. This shows that this definition of admissible initial conditions can be quite  general, and in some sense, bearing the remark made earlier, the most general possible.\\

Now, to elucidate the meaning of \textit{locally Brownian}, we say that random function $F$ in $\mathcal{C}(\R;\R)$ is called {\bf Brownian on compacts} if for all $y_1<y_2$, the law of the $\mathcal{C}_0[y_1,y_2]$-valued random variable
$$y\mapsto F(y)-F(y_1)\,,$$
is absolutely continuous with respect to the law of a Brownian motion starting from $(y_1,0)$ with diffusion parameter $\sigma^2$ on $[y_1,y_2]$. Owing to convention, herein we use $\sigma^2=2$. We are now ready to state the main result of \cite{sarkar2021brownian}.

\begin{bigtheo}[Absolute continuity of KPZ fixed point on compacts]\label{thm: Absolute continuity KPZ} Let $t>0$; then for any $t$-finitary initial condition $h_0$ the random function
\[h_t(y) = \sup_{x\in \R} (h_0(x) + \mathcal{L}(x, 0; y, t)))\]
is Brownian on compacts.
\end{bigtheo}

\begin{remark}
    This form of local Brownainness, namely, being Brownian on compacts strictly supersedes that introduced in the paper by Quastel, Matetski and Remenik \cite{matetski2021kpz}, and leads to H\"{o}lder $\frac{1}{2}-$ spatial regularity of the KPZ fixed point, see corollaries \ref{c:wlocalbrown} and \ref{cor: Holder reg} respectively in the Appendix.
\end{remark}

The proof is elegant, owing to the framework developed thus far, combining all of the results gathered along the way. Notice that by scaling, it is enough to show Theorem \ref{thm: Absolute continuity KPZ} for $t = 1$, namely, to show that 
\[
h(y)\coloneqq \sup_{x\in \R} (h_0(x) + \mathcal{S}(x,y))
\]
is Brownian on compacts, where $\mathcal{S}(x,y)$ is an Airy Sheet since $\mathcal{L}(x, 0; y, t)$ has the law of an Airy Sheet of scale $t$. More precisely, notice that for a $t$-finitary initial condition $h_0$, 
\[
h_t(y) = \sup_{x\in\R}(\mathcal{L}(x,0;y,t)+h_0(x))\stackrel{d}{=} \sup_{x\in\R}(t^{1/3}\mathcal{S}(t^{-2/3}x,t^{-2/3}y)+h_0(x))
\]
\[
\stackrel{d}{=}  \sup_{x\in\R}t^{1/3}(\mathcal{S}(x,t^{-2/3}y)+t^{-1/3}h_0(t^{2/3}x)).
\]  
Now, since one can easily check that $t^{-1/3}h_0(t^{2/3}x)$ is $1$-finitary, if we could show that 
 \[
 \hat{h}_t(y) = t^{-1/3}h_t(t^{2/3}y) \stackrel{d}{=}  \sup_{x\in\R}(\mathcal{S}(x,y)+t^{-1/3}h_0(t^{2/3}x))
 \]
 is locally Brownian, then applying the inverse scaling to $\hat{h}_t(y)$ yields that $h_t(y) \stackrel{d}{=} t^{1/3}\hat{h}_t(t^{-2/3}y)$ is locally Brownian \footnote{This is essentially due to the fact that this 2:1 space:fluctuation scaling is precisely the one that keeps the law of a standard Brownian motion invariant.} We have thus reduced the general case to the case where $t = 1$.\\

To this end, we now prove the local Brownian nature of the fixed point at $t=1$, albeit with the $1$-finitary initial condition replaced with that of simplified initial data, namely, \textit{continuous} and \textit{compactly defined} data $h_0$ defined on some fixed interval $[x_1,x_2]$ and with $h_0(x)=-\infty$ for all $x\not\in[x_1,x_2]$. In other words, we would like the following. 

\begin{prop}[Compactly supported initial data]\label{prop: compactly supp data KPZ}
    With  \[
    h(y) = \sup_{x\in[x_1,x_2]}(h_0(x)+\mathcal{S}(x,y))\]
    and $h_0$ \textbf{continuous} and compactly defined, then $h$ is Brownian on compacts.
\end{prop}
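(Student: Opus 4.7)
The plan is to reduce the statement to the absolute continuity of Brownian last passage percolation (Theorem \ref{thm: abs cont brownian lpp}) by combining the Airy sheet/Airy line ensemble coupling \ref{eq: coupling}, the metric composition law \ref{eq: composition}, and the Brownian Gibbs property (Theorem \ref{thm: Brownian Gibbs}), followed by a discretize-and-limit argument. By the stationarity of $\mathcal{S}$ I first translate so that both $[x_1, x_2]$ and the target compact $[y_1, y_2]$ lie in $(0, \infty)$.

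First, discretize. Using uniform continuity of $h_0$ and joint continuity of $\mathcal{S}$, approximate $h$ by
\[
h_n(y) = \max_{1 \le j \le n}\bigl(h_0(x_j^{(n)}) + \mathcal{S}(x_j^{(n)}, y)\bigr),
\]
with $\{x_j^{(n)}\}$ a $1/n$-net of rationals in $[x_1, x_2]$, so that $h_n \to h$ uniformly on $[y_1, y_2]$ almost surely. For each fixed $n$, fix a rational $z^\star$ strictly to the left of $y_1$ and choose $k = k(n)$ (a.s.\ finite) large enough that \ref{eq: coupling} holds for all pairs $(x_j^{(n)}, z^\star)$ uniformly on a countable dense subset of $[y_1, y_2]$ in $y$; continuity in $y$ of both sides then extends the identity to all $y \in [y_1, y_2]$. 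Applying metric composition \ref{eq: composition} to $\mathcal{A}[(a_j, k) \to (y, 1)]$ at the spatial point $z^\star$, with $a_j = -\sqrt{k/(2 x_j^{(n)})}$, and interchanging the resulting $\max_j$ and $\max_\ell$, I obtain
\[
h_n(y) = \max_{1 \le \ell \le k}\bigl(c_\ell^{(n)} + \mathcal{A}[(z^\star, \ell) \to (y, 1)]\bigr),\qquad y \in [y_1, y_2],
\]
for random coefficients $c_\ell^{(n)}$ that are constant in $y$.

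Next, apply the Brownian Gibbs property to the top $k$ lines of $\mathcal{A}$ on a window $(y_1 - \eta, y_2 + \eta) \supset [y_1, y_2]$ with $z^\star < y_1 - \eta$. One further metric composition of $\mathcal{A}[(z^\star, \ell) \to (y, 1)]$ at the left window endpoint $y_1 - \eta$ isolates the inside-window contribution, rewriting $h_n(y) = \max_{\ell'}\bigl(\tilde c_{\ell'}^{(n)} + \mathcal{A}[(y_1 - \eta, \ell') \to (y, 1)]\bigr)$ with coefficients $\tilde c_{\ell'}^{(n)}$ depending only on Airy LE values outside the window (modulo the $\mathcal{S}(x_j^{(n)}, z^\star)$ subtlety discussed below). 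Let $\mathcal{F}$ be the sigma-algebra generated by $\mathcal{A}$ outside the window; conditionally on $\mathcal{F}$ the top $k$ lines inside are $k$ independent Brownian bridges (diffusion $2$) conditioned on non-intersection with each other and with $\mathcal{A}_{k+1}$, an event of positive probability, hence this law is mutually absolutely continuous with unconditioned bridges. Restricted to the strict subinterval $[y_1, y_2]$ these bridges are absolutely continuous to independent Brownian motions starting from the prescribed boundary values, which by translating may be moved to $0$ since LPP depends only on curve increments. Theorem \ref{thm: abs cont brownian lpp} then yields absolute continuity of $h_n - h_n(y_1)$ restricted to $[y_1, y_2]$ to Brownian motion of diffusion $2$ starting at $0$, for each fixed $n$.

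The principal obstacle is twofold. First, the coefficients $\tilde c_{\ell'}^{(n)}$ involve Airy sheet values $\mathcal{S}(x_j^{(n)}, z^\star)$, whose relation to the Airy LE is only via the coupling's \emph{differences}; deterministizing them requires enlarging $\mathcal{F}$ and checking that Gibbs resampling persists under this enlargement, a conditional independence statement resting on the specific DOV construction of $\mathcal{S}$. Second, and more seriously, is passing to the uniform limit $h_n \to h$: absolute continuity is not preserved by uniform limits in general, so one must establish uniform integrability in $n$ of the Radon-Nikodym densities $d\mathrm{Law}(h_n - h_n(y_1))/d\nu$ against the Brownian reference $\nu$. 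This demands quantitative versions of the non-intersection conditioning density from Gibbs, the bridge-to-motion Girsanov density, and the density bound implicit in Theorem \ref{thm: abs cont brownian lpp}, together with explicit control of the growing number of lines $k(n)$. I expect the bulk of the technical effort to concentrate here.
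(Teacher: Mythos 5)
Your proposal identifies the right ingredients (the Airy sheet/line-ensemble coupling, metric composition, Brownian Gibbs, Theorem \ref{thm: abs cont brownian lpp}), but the organizing strategy is genuinely different from the paper's, and the difference is not cosmetic: the discretize-and-take-limits plan has a gap exactly where you worry it will, and the paper circumvents that gap entirely rather than closing it.

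The paper does not discretize the supremum over $x$ at all. It works directly with the continuous supremum and invokes a geometric result about Airy-line-ensemble geodesics (Lemma $3.10$ of \cite{sarkar2021brownian}, itself resting on the parabolic curvature forcing rightmost geodesics to coalesce at a finite depth): almost surely there is a random integer $L_0$ so that, simultaneously for all $x\in[1,x_0]\cap\Q$ and $y\in[1,y_0]$, the LPP contribution factorizes through a single intermediate point $(0,\ell)$ with $\ell\le L_0$, i.e.
\[
h(y)=\max_{\ell\le L_0}\Bigl(G_\ell+\mathcal{A}[(0,\ell)\to(y,1)]\Bigr),
\qquad
G_\ell=\sup_{x\in[1,x_0]\cap\Q}\bigl(h_0(x)+\mathcal{A}[x\to(0,\ell)]\bigr),
\]
where $\mathcal{A}[x\to(0,\ell)]$ is defined by the limit \ref{eq:Airy limit} and is $\mathcal{F}_-$-measurable. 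So the continuous supremum over $x$ is absorbed, once and for all, into finitely many $\mathcal{F}_-$-measurable constants $G_\ell$ (a.s.\ finite by Lemma $3.8$ there). The absolute-continuity conclusion then follows by the trivial decomposition over the events $\{L_0=k\}$: if $\mu(A)=0$ and each $\xi_k\lll\mu$, then
$\PP(h\in A)=\sum_k\PP(\{\mathcal H_k\in A\}\cap\{L_0=k\})\le\sum_k\xi_k(A)=0$.
There is no limit of laws to take and hence no uniform integrability of Radon--Nikodym densities to control.

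In your version, the number of lines $k(n)$ needed to write $h_n$ as a finite max of $\mathcal{A}[(z^\star,\ell)\to(y,1)]$'s depends on the mesh $1/n$, and you are then stuck with passing $h_n\to h$ through the absolute-continuity relation. You correctly flag that absolute continuity is not stable under uniform (or even a.s.) limits of the approximating processes, and you correctly anticipate that closing this would require uniform bounds on the densities in $n$; but nothing in the ingredients you have listed produces such bounds, and the paper gives no indication that they hold or are needed. The technology to avoid this is precisely the geodesic-coalescence lemma: rather than truncating $h_0$ on a finite net, one proves that a single a.s.\ finite $L_0$ works \emph{for the entire continuous supremum}, reducing the continuous variational problem to the finite-line $\mathcal{H}_k$ that is handled by Lemma \ref{lemma: Gibbs}. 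Your secondary concern about $\mathcal{S}(x_j^{(n)},z^\star)$ entering the coefficients is also resolved along the way: in the paper the analogous quantities $G_\ell$ are shown to be $\mathcal{F}_-$-measurable (Lemma $3.9$), because $\mathcal{A}[x\to(0,\ell)]$ is built as a limit depending only on the Airy lines at nonpositive spatial locations, so conditioning in the Gibbs step is clean.

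In short: the coupling-plus-Gibbs-plus-BLPP scaffolding you propose is the right one, but the discretize-and-limit route has a real hole, and the paper's actual mechanism for avoiding it --- the existence of a uniform random line cutoff $L_0$ for the \emph{continuous} supremum, via geodesic coalescence --- is the key idea missing from your argument.
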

\begin{remark}
    The content of the proof of proposition \ref{prop: compactly supp data KPZ} is broken into lemmas \ref{lemma: restriction} and \ref{lemma: Gibbs}, and everything is combined thereafter to form the main argument of the above proposition.
\end{remark}
We first collect the result that amounts to reducing what one has to show to prove local Brownianness.

\begin{lemma}\label{lemma: restriction}
Fix any $1<x_0,y_0$ and $\hat{h}_0$ compactly defined and continuous on $[1,x_0]$. Consider  
\[
\hat{h}(y) = \sup_{x\in[1,x_0]}(\hat{h}_0(x)+\mathcal{S}(x,y)).
\]
If the law of the law of the $\mathcal{C}_0[1,y_0]$-valued random variable
$$y\mapsto \hat{h}(y)-\hat{h}(1)$$
is absolutely continuous with respect to the law of a Brownian motion starting from $(1,0)$ with diffusion parameter $2$ on $[1,y_0]$, then $h$ as in Proposition \ref{prop: compactly supp data KPZ} is Brownian on compacts.
\end{lemma}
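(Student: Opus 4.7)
The plan is to exploit the diagonal stationarity of the Airy sheet, $\mathcal{S}(\cdot+a,\cdot+a)\stackrel{d}{=}\mathcal{S}$, to transport the general problem into the specific form of the lemma's hypothesis, and then to use the Markov property of Brownian motion to cover arbitrary starting points for the $y$-interval.

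Fix $y_1<y_2$; the goal is to show that $y\mapsto h(y)-h(y_1)$ on $[y_1,y_2]$ is absolutely continuous with respect to Brownian motion starting at $(y_1,0)$ with diffusion parameter $2$. Choose a shift $a\in\R$ with $a\geq\max(1-x_1,\,1-y_1)$, so that both the translated $x$-support $[x_1+a,x_2+a]$ and the translated $y$-interval $[y_1+a,y_2+a]$ lie in $[1,\infty)$. Set $x_0:=x_2+a$ and $v_i:=y_i+a$ for $i=1,2$. The stationarity relation, together with the change of variables $u=x+a,\,v=y+a$, yields the distributional identity $(h(y))_{y\in\R}\stackrel{d}{=}(\hat{h}(y+a))_{y\in\R}$, where
\[
\hat{h}(v):=\sup_{u\in[1,x_0]}\bigl(\hat{h}_0(u)+\mathcal{S}(u,v)\bigr)
\]
and $\hat{h}_0\colon[1,x_0]\to\R$ is a continuous function that agrees with $u\mapsto h_0(u-a)$ on the sub-interval $[x_1+a,x_0]$.

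Now apply the hypothesis of the lemma to this $\hat{h}_0$ with any $y_0\geq v_2$: this gives absolute continuity of $v\mapsto\hat{h}(v)-\hat{h}(1)$ on $[1,y_0]$ with respect to the law of a Brownian motion $W$ starting at $(1,0)$ with diffusion parameter $2$. Absolute continuity is preserved under the measurable pushforward $\Phi\colon f\mapsto(f(v)-f(v_1))_{v\in[v_1,v_2]}$, and by the Markov property of $W$ the image $\Phi(W)=(W_v-W_{v_1})_{v\in[v_1,v_2]}$ has the law of a Brownian motion starting at $(v_1,0)$ with diffusion parameter $2$. This yields absolute continuity of $v\mapsto\hat{h}(v)-\hat{h}(v_1)$ on $[v_1,v_2]$ with respect to that Brownian motion. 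Undoing the shift via the distributional identity above, combined with the translation invariance of Brownian motion, delivers the desired absolute continuity for $h$.

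The principal obstacle is the continuous extension of $u\mapsto h_0(u-a)$ from $[x_1+a,x_0]$ to all of $[1,x_0]$: when $a>1-x_1$, the extension contributes extra terms to the supremum defining $\hat{h}$ that are absent from the natural shift of $h$. One handles this by considering a sequence of extensions $\hat{h}_0^N$ that tend to $-\infty$ on $[1,x_1+a)$ as $N\to\infty$, and arguing via the uniform continuity and local boundedness of $\mathcal{S}$ on compact subsets of $\R^2$ that for each realisation there is a random $N_0$ beyond which the supremum defining $\hat{h}^N$ is almost surely attained in $[x_1+a,x_0]$ for every $v\in[v_1,v_2]$; on this event $\hat{h}^N$ coincides with the natural shift of $h$ on $[v_1,v_2]$. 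Transferring the absolute continuity conclusion from the sequence $\hat{h}^N$ to the limit requires a stronger-than-weak convergence of the induced laws, for instance total-variation convergence deduced from the almost-sure eventual equality, or equivalently a uniform integrability argument on the Radon-Nikodym derivatives. This monotone approximation is the heart of the proof.
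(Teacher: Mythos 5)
Your approach follows the same skeleton as the paper's proof: use the diagonal stationarity of the Airy sheet to transport $(h_0,[y_1,y_2])$ into the normalized setting $(\hat h_0 \text{ on } [1,x_0],\ [1,y_0])$ where the hypothesis applies, and then undo the shift and restrict. Two points diverge. First, you choose a single shift $a\geq\max(1-x_1,1-y_1)$, which makes both the $x$-support and the $y$-window land in $[1,\infty)$ and lets you avoid the paper's case split on whether $x_1\geq y_1$ or $x_1<y_1$; in exchange you always pass through a restriction map $\Phi$ and invoke the Markov property of Brownian motion, whereas the paper only needs the analogous map $\Xi$ in the second case. This is a clean streamlining, not a gap. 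Second, and more substantively, you and the paper handle the domain mismatch differently: after shifting, the natural data is $u\mapsto h_0(u-a)$, which is $-\infty$ on $[1,x_1+a)$ and so not a continuous function on $[1,x_0]$. The paper simply remarks that the hypothesis carries over with $[r,x_0]$ in place of $[1,x_0]$ (since the proof of the underlying Lemma goes through verbatim with the smaller interval), while you construct a sequence of continuous extensions $\hat h_0^N\downarrow-\infty$ on $[1,x_1+a)$ and argue that for a.s.\ all realizations, eventually $\hat h^N$ coincides with the natural shift of $h$ on $[v_1,v_2]$. Both are valid; yours is more self-contained, the paper's is slicker but implicitly requires re-inspecting the proof of the hypothesis.

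Two small cautions on your version. (i) The distributional identity $(h(y))_y\stackrel{d}{=}(\hat h(y+a))_y$ as stated early in your write-up is not literally true when $\hat h_0$ is a continuous extension, because the supremum picks up contributions from $[1,x_1+a)$; the identity holds for the $-\infty$ extension, which is exactly what you then fix via the $\hat h_0^N$ approximation. You should reorder so that the identity is only asserted for the $-\infty$ extension. (ii) You also need to be a bit careful about the eventual-coincidence claim near the boundary point $x_1+a$: because $\hat h_0^N$ is continuous and agrees with $h_0(\cdot-a)$ there, it cannot drop to $-\infty$ instantaneously, so the claim that the sup is eventually attained in $[x_1+a,x_0]$ rests on the (random) modulus of continuity of $\mathcal{S}$ on the compact rectangle and on the downward slope of $\hat h_0^N$ eventually dominating it; this deserves a line of justification. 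Finally, the transfer of absolute continuity in the limit does not really need total-variation convergence or uniform integrability of Radon--Nikodym derivatives: since $\hat h^N$ equals the shift of $h$ on events $E_N$ with $\PP(E_N)\to 1$, for any $\mu$-null $A$ one has $\PP(h\in A)\le \PP(\hat h^N\in A)+\PP(E_N^c)\to 0$, which is the same direct union-bound device the paper uses repeatedly elsewhere. Your stronger convergence is correct but not necessary.
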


\begin{proof}
Fix any $y_1, y_2\in\R$ with $y_1<y_2$. Suppose first that $x_1\geq y_1$. Then, with the bi-measurable bijection $\Phi: \mathcal{C}([y_1,y_2];\R) \to \mathcal{C}([1, y_2-y_1+1];\R)$ 
\[g\mapsto (y\mapsto g(y+y_1-1))\,,\]
we compute for $y\in [1, y_2-y_1+1]$
\[
\Phi(h(y)) = \sup_{x\in[x_1,x_2]}(h_0(x)+\mathcal{S}(x,y+y_1-1))
\]
\[
= \sup_{x\in[x_1,x_2]}(h_0(x)+\mathcal{S}(x-y_1+1,y))\,
\]
using the translation invariance enjoyed by the Airy sheet. We thus have,
\[
\Phi(h(y)) = \sup_{x\in[x_1-y_1+1,x_2-y_1+1]}(h_0(x+y_1-1)+\mathcal{S}(x,y))
\]
\[
= \sup_{x\in[1,x_2-y_1+1]}(h_0(x+y_1-1)+\mathcal{S}(x,y))\,,
\]
where $h_0(\cdot+y_1-1)$ is compactly defined on $[1,x_2-y_1+1]$ and using the fact that $x_1\leq y_1$ to enlarge the domain of the optimisation. Hence, using Proposition \ref{prop: compactly supp data KPZ}, we conclude that the law of 
\[
\mathcal{L}\Bigl(y\mapsto \Phi(h)(y)-\Phi(h)(1), y\in [1, y_2-y_1+1]\Bigr)\lll \mu_{[1, y_2-y_1+1]} = \Phi_*\mu_{[y_1,y_2]}\,.
\]
Setting $h_0\equiv-\infty$ on a slightly larger interval, while losing continuity, does not obstruct us from applying the above proposition, since the proof carries along in essentially the same way (e.g. with $[r,x_0]$ instead of $[1,x_0]$ for any $r\in [1,x_0]$, where one has $h_0$ continuous). Furthermore, observe that restricting $\Phi: \mathcal{C}_0([y_1,y_2];\R) \to \mathcal{C}_0([1, y_2-y_1+1];\R)$ to the subspace of continuous functions starting from zero yields 
\[
\Phi_{*}\mathcal{L}\Bigl(y\mapsto h(y)-h(y_1), y\in [y_1, y_2]\Bigr) = \mathcal{L}\Bigl(y\mapsto \Phi(h)(y)-\Phi(h)(1), y\in [1, y_2-y_1+1]\Bigr)\,,
\]
thereby giving 
\[
\Phi_{*}\mathcal{L}\Bigl(y\mapsto h(y)-h(y_1), y\in [y_1, y_2]\Bigr) \lll \Phi_*\mu_{[y_1,y_2]}\,.
\]
Now push forward again with the inverse map $\Phi^{-1}$  using the composition property of pushforward operation on measures to deduce 
\[
\mathcal{L}\Bigl(y\mapsto h(y)-h(y_1), y\in [y_1, y_2]\Bigr)\lll \mu_{[y_1,y_2]}\,.
\]
To deal with the remaining case, namely that $x_1<y_1$, observe that the above argument shows absolute continuity on a larger interval, $[x_1, y_2]$. That is
\begin{equation}\label{eq: abs cont large interval}
\mathcal{L}\Bigl(y\mapsto h(y)-h(x_1), y\in [x_1, y_2]\Bigr)\lll \mu_{[x_1,y_2]}\,.
\end{equation}
Define the measurable map $\Xi: \mathcal{C}_0([x_1,y_2];\R) \to \mathcal{C}_0([y_1, y_2];\R)$ \[g\mapsto (y\mapsto g(y)-g(y_1)).\]
Now, let
\[
A = \{f\in \mathcal{C}_0[y_1, y_2]:f(t_1)\in B_1, \cdots, f(t_k)\in B_{k}\}
\]
with $B_1, \cdots, B_k$ Borel and $y_1\leq t_1<\cdots<t_k\leq y_2$, be a cylinder set in $\mathcal{C}_0[y_1,y_2]$. Then we compute 
\[
\Xi_*(\mu_{[x_1,y_2]})(A) = \mu_{[x_1,y_2]}(\Xi^{-1}(A))
\]
\[
= \mu_{[x_1,y_2]}\Bigl(\{f\in \mathcal{C}_0[x_1, y_2]:f(t_1)-f(y_1)\in B_1, \cdots, f(t_k)-f(y_1)\in B_{k}\}\Bigr) = \mu_{[y_1,y_2]}(A)\,,
\]
where the last equality follows from standard facts of Gaussian measures. Since $A$ was arbitrary, and cylindrical sets generate the Borel sigma algebra on $\mathcal{C}_0([y_1, y_2];\R)$, we thus have $\Xi_*(\mu_{[x_1,y_2]}) = \mu_{[y_1,y_2]}$. Finally, \ref{eq: abs cont large interval} implies 
\[
\mathcal{L}\Bigl(y\mapsto h(y)-h(y_1), y\in [y_1, y_2]\Bigr)
=\Xi_*\mathcal{L}\Bigl(y\mapsto h(y)-h(x_1), y\in [x_1, y_2]\Bigr)\]
\[
\lll \Xi_*\mu_{[x_1,y_2]} = \mu_{[y_1,y_2]}
\]
after pushing forward both laws under $\Xi$, which concludes the proof of absolute continuity of $y\mapsto h(y)-h(y_1)$ on $[y_1,y_2]$ with respect to $\mu_{[y_1,y_2]}$. Since, $y_1<y_2$ were arbitrary, we are done.

\end{proof}

\begin{proof}[Proof of Proposition \ref{prop: compactly supp data KPZ}]
    Fix $y_1<y_2$ and consider the restriction of $h$ as above to the interval $[y_1,y_2]$. The above lemma, (Lemma \ref{lemma: restriction}) shows that it suffices to show the claim when $x_1 = y_1 = 1$ and $x_2=x_0$ and $y_2 = y_0$ for some $x_0,y_0>1$. This essentially holds since the law of $\mathcal{S}$ is invariant under translations, and one can extend the intervals in consideration without affecting the absolute continuity.\\

    Using the continuity of $h_0$ and $\mathcal{S}(x,y)$ one only has to work on the rationals, namely
    \[
    h(y) = \sup_{x\in[1,x_0]\cap\Q}(h_0(x)+\mathcal{S}(x,y))\,.
    \]
    Now, from Lemma 3.10 in \cite{sarkar2021brownian}, there is a random constant $L_0$ such that almost surely for all $y\in[1,y_0]$
    \[
    h(y) = \sup_{x\in[1,x_0]\cap\Q}(h_0(x)+\max_{\ell\leq L_0}\mathcal{A}[x\to(0,\ell)]+\mathcal{A}[(0,\ell)\to (y,1)])
    \]
    where $\mathcal{A}$ is an Airy line ensemble that is coupled to the Airy Sheet $\mathcal {S}$ as in definition \ref{def: Airy Sheet} and 
    \begin{equation}\label{eq:Airy limit}
    \mathcal{A}[x\to(0,\ell)]\coloneqq 
    \begin{cases}
        \mathcal{S}(x,0) & \ell = 1\\
        \displaystyle\lim_{k\to\infty}\mathcal{A}[x_k\to (0,\ell)]-\mathcal{A}[x_k\to (0,1)]+\mathcal{S}(x,0) & \ell >1, x\in\Q^+
    \end{cases}
    \end{equation}
    with $x_k = (-\sqrt{k/2x},k), k\in\N$.\\
    
    \noindent The fact that this limit exists and is well defined is the crux of Theorem 3.7 in \cite{sarkar2021brownian} and uses geometric properties of geodesics in the Airy line ensemble. In some sense, the downward parabolic curvature of the Airy line ensemble $\mathbf{\mathcal{A}} = (\mathcal{A}_1, \mathcal{A}_2,\cdots)$ (see figure \ref{fig: Gibbs}), which follows from the observation that for $i\in \N$, the process $\mathcal{A}_i(\cdot)+(\cdot)^2$ is stationary (see \cite{sarkar2021brownian}), forces rightmost last passage paths (rightmost geodesics) to almost surely eventually intersect in the far left end of the plane (Lemma 3.4 in \cite{sarkar2021brownian}).\\
    
    Now, exchanging the sup and max gives
    \[
    h(y) = \max_{\ell\leq L_0}(G_{\ell}+\mathcal{A}[(0,\ell)\to (y,1)])\,,
    \]
    where
    \[
    G_\ell \coloneqq  \sup_{x\in[1,x_0]\cap\Q}(h_0(x)+\mathcal{A}[x\to(0,\ell)])\,.
    \]
    For any $\ell\in\N$, $G_{\ell}$ enjoys the following two properties, namely, that almost surely $G_{\ell}<\infty$ and that it is measurable with respect to the sigma algebra $\mathcal{F}_{-}\coloneqq \sigma(\{\mathcal{A}_i(x):x\leq0,i=1,2,\cdots\})$. This is the content of Lemmas $3.8$ and $3.9$ respectively. The latter property essentially follows from the definition \ref{eq:Airy limit}.\\

    For a moment assume that for all $k\in \N$, the laws $\xi_k$ of 
    \[
    \mathcal{H}_k(y) \coloneqq \max_{1\leq\ell\leq k}(G_{\ell}+\mathcal{A}[(0,\ell)\to (y,1)])
    \] 
    are absolutely continuous to $\mu$, defined to be the law of a standard Brownian motion with diffusion parameter $\sigma^2 = 2$ on $[1,y_0]$. Then for any Borel set in $\mathcal{C}[1,y_0]$ with $\mu(A) = 0$, we compute 
    \[
    \mathbb{P}(\{h(y)\in A\}) = \displaystyle \sum_{k=1}^\infty \mathbb{P}(\{h(y)\in A\}\cap\{L_0 = k\}) \]
    \[
    = \displaystyle \sum_{k=1}^\infty \mathbb{P}(\{\mathcal{H}_k(y)\in A\}\cap\{L_0 = k\}) = 0\,,
    \]
    since for all $k\in\N,\mathbb{P}(\{\mathcal{H}_k(y)\in A\}\cap\{L_0 = k\})\leq\xi_k(A) = 0$, by absolute continuity. Thereby showing that the law of $y\mapsto h(y)-h(1)$ is absolutely continuous with respect to the law of a standard Brownian motion with diffusion parameter $2$ started from $(1,0)$ and restricted to $[1,y_0]$\footnote{This deduction follows from a very similar argument as in the end of Lemma \ref{lemma: restriction}, using $\Psi: \mathcal{C}([1,y_0];\R) \to \mathcal{C}_0([1,y_0];\R)$ the measurable map $g\mapsto (y\mapsto g(y)-g(1))$ instead of $\Xi$.}, which would conclude the proof.\\
    
    The fact that $\xi_k\lll\mu$, (where $\mu\lll\nu$ denotes absolute continuity of $\mu$ with respect to $\nu$) for all $k\in\N$ is the content of lemma \ref{lemma: Gibbs} which follows from Theorem \ref{thm: abs cont brownian lpp} and crucially uses the Brownian Gibbs property of the Airy line ensemble.
\end{proof}

\begin{lemma}\label{lemma: Gibbs}
For $k\in\N$, some $y_0>1$, $\mathcal{H}_k(y) \coloneqq \displaystyle\max_{1\leq\ell\leq k}(G_{\ell}+\mathcal{A}[(0,\ell)\to (y,1)])$ and $G_\ell \coloneqq (h_0(x)+\mathcal{A}[x\to(0,\ell)])$, as in Proposition \ref{prop: compactly supp data KPZ}, it follows that \[
\xi_k\lll\mu,\]
where $\xi_k$ is the law of $\mathcal{H}_k(y)$ on $[1,y_0]$ and $\mu$ is defined to be the law of a standard Brownian motion with diffusion parameter $\sigma^2 = 2$ on $[1,y_0]$.
\end{lemma}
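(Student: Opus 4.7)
The plan is to first condition on a sigma algebra that turns the quantities $G_\ell$ into deterministic constants while isolating the randomness of the top $k$ Airy lines on a bounded interval, and then apply the Brownian Gibbs property followed by a Girsanov/restriction step in order to reduce the claim to Theorem \ref{thm: abs cont brownian lpp}. Concretely, I would fix any $b_0 > y_0$ and introduce
\[
\mathcal{G} = \sigma\bigl(\{\mathcal{A}_i(x) : (i,x) \notin \{1,\ldots,k\} \times (0, b_0)\}\bigr).
\]
Since $\mathcal{F}_{-} \subset \mathcal{G}$ and the $G_\ell$ are $\mathcal{F}_{-}$-measurable (Lemma 3.9 of \cite{sarkar2021brownian}) and a.s. finite (Lemma 3.8), conditionally on $\mathcal{G}$ they become deterministic constants $g_1,\ldots, g_k$.

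The next step invokes the Brownian Gibbs property (Theorem \ref{thm: Brownian Gibbs}) to identify the conditional law of $(\mathcal{A}_1,\ldots,\mathcal{A}_k)|_{[0, b_0]}$ given $\mathcal{G}$: it is that of $k$ independent Brownian bridges with diffusion parameter $2$, the $i$-th from $\mathcal{A}_i(0)$ to $\mathcal{A}_i(b_0)$, conditioned on the event $E$ that they are strictly ordered and stay above $\mathcal{A}_{k+1}$ throughout $[0, b_0]$. The almost sure strict ordering of the ambient Airy line ensemble guarantees the endpoint data is strictly ordered and above $\mathcal{A}_{k+1}$ at $0$ and $b_0$, so $E$ has positive conditional probability almost surely. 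Since conditioning on a positive-probability event preserves absolute continuity on the ambient path space, the conditional law of $(\mathcal{A}_1,\ldots,\mathcal{A}_k)|_{[0, y_0]}$ given $\mathcal{G}$ is absolutely continuous with respect to the product law of $k$ independent unconditioned Brownian bridges restricted to $[0, y_0]$.

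Next, I would pass from bridges to motions on the strict sub-interval $[0, y_0] \subsetneq [0, b_0]$: a Brownian bridge on $[0, b_0]$ from a fixed initial value, restricted to $[0, y_0]$, is mutually absolutely continuous with a Brownian motion on $[0, y_0]$ from the same initial value, the Radon-Nikodym density being a ratio of Gaussian transition densities depending only on the path's value at $y_0$. Chaining this with the previous step, and using that the last-passage length is invariant under adding a constant to any individual line, the upshot is that, conditional on $\mathcal{G}$, the law of the ensemble $(\mathcal{A}_1,\ldots,\mathcal{A}_k)|_{[0, y_0]}$ is absolutely continuous with respect to the law of $k$ independent standard Brownian motions with diffusion parameter $2$ starting from the origin. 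Pushing forward under the continuous functional $\mathcal{B} \mapsto \bigl(y \mapsto \max_{\ell\leq k}(g_\ell + \mathcal{B}\,[(0,\ell) \to (y,1)])\bigr)$, the conditional law of $\mathcal{H}_k|_{[1,y_0]}$ inherits absolute continuity with respect to the law of $y \mapsto \max_{\ell \leq k}(g_\ell + \mathcal{B}\,[(0,\ell)\to(y,1)])$.

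Theorem \ref{thm: abs cont brownian lpp} with $r = 1$ then gives that this latter law is absolutely continuous with respect to $\mu$, so the conditional law of $\mathcal{H}_k|_{[1, y_0]}$ given $\mathcal{G}$ is absolutely continuous with respect to $\mu$ almost surely. De-integration completes the argument: if $\mu(A) = 0$, then $\mathbb{P}(\mathcal{H}_k|_{[1, y_0]} \in A \mid \mathcal{G}) = 0$ almost surely, hence $\xi_k(A) = \mathbb{E}[\mathbb{P}(\mathcal{H}_k|_{[1, y_0]} \in A \mid \mathcal{G})] = 0$. I expect the main technical obstacle to be the bookkeeping around the conditioning --- particularly, verifying that $E$ has positive conditional probability almost surely, and carrying out the Girsanov-style restriction from bridges to motions cleanly while respecting the measurability of all $\mathcal{G}$-dependent quantities.
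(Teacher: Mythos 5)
Your proposal follows essentially the same route as the paper: condition on the sigma algebra generated by everything outside the top $k$ lines on an interval strictly containing $[0,y_0]$, invoke the Brownian Gibbs property to see the conditional law as non-crossing Brownian bridges, pass from bridges on the large interval to Brownian motions on the sub-interval (the paper invokes the argument in Proposition~4.1 of \cite{corwin2014brownian} where you spell out the Gaussian density ratio directly, but this is the same idea), shift away the initial values via invariance of the LPP length under per-line constants, apply Theorem~\ref{thm: abs cont brownian lpp} with the now-deterministic $G_\ell$'s, and de-integrate. The only differences are cosmetic: you allow a generic $b_0 > y_0$ where the paper fixes $b_0 = y_0 + 1$, and the paper packages the de-integration step via the disintegration theorem in the appendix rather than the explicit $\mathbb{E}[\mathbb{P}(\cdot\,|\,\mathcal{G})]$ computation.
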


\begin{proof}[Proof]
Let $\mathcal{F}_k$ denote the sigma algebra generated by
\[
\{\mathcal{A}_i(x): (i,x)\not\in\{1,2,\cdots,k\}\times(0,y_0+1)\}\,.
\]
Unravelling the Brownian Gibbs Property, \ref{thm: Brownian Gibbs}, we have that conditioning on $\mathcal{F}_k$, the law of the top $k$ lines of the Airy line ensemble between the interval $[1,y_0+1]$ is that of $k$ independent Brownian bridges $B_1, B_2, \cdots, B_k$ with diffusion parameter $2$ on $[0,y_0+1]$ conditioned not to intersect each other and the bottom line $\mathcal{A}_{k+1}$, starting and ending at $\mathcal{A}_i(0)$ and $\mathcal{A}_i(y_0+1)$ for $i = 1,2,\cdots, k$ respectively.\\

More precisely, with $k\in \N$ as above and $\bar{x},\bar{y}\in \R^k$ with $x_1>x_2>\cdots>x_k$ and $y_1>y_2>\dots>y_k$. Write $\mathcal{W}_{\bar{x},\bar{y}}$ for the law of $k$ independent Brownian bridges $B_i:[0,y_0+1] \to \R$, $1 \leq i \leq k$, that satisfy $B_i(0) = x_i$ and $B_i(y_0+1) = y_i$. Observe that for $f:[0,y_0+1] \to \R$ a measurable function such that $x_{k}>f(0)$ and $y_k>f(y_0+1)$, the \textit{non-crossing} event on $ [0,y_0+1]$ given by
\begin{equation*}
\mathcal{N}^f_{[0,y_0+1]} =\Big\{\text{for all } r\in [0,y_0+1],  B_{i}(r) > B_{j}(r) \textrm{ for all } 1\leq i<j\leq k \textrm{ and  $B_k(r) > f(r)$} \Big\}
\end{equation*}
has positive probability, which follows from standard facts about Brownian Bridges, see section 2.2.2 in \cite{corwin2014brownian}. In other words, the {\it acceptance probability} $\mathfrak{a}(\bar{x},\bar{y},f)= \mathcal{W}_{\bar{x},\bar{y}}(\mathcal{N}_{[0,y_0+1]}^f)$ is positive. We are thus able to define the conditional measure \[
\mathcal{W}_{\bar{x},\bar{y}} \big( \cdot \big\vert \mathcal{N}^f_{[0,y_0+1]} \big)
= \frac{\mathcal{W}_{\bar{x},\bar{y}}(\cdot\cap \mathcal{N}_{[0,y_0+1]}^f)}{\mathfrak{a}(\bar{x},\bar{y},f)}\]
denoted by $\mathfrak{B}_{\bar{x},\bar{y}}^f(\cdot)$. Now, from \cite{corwin2014brownian} and \cite{hammond2021brownian}, the Brownian Gibbs property as stated in Theorem \ref{thm: Brownian Gibbs} can be written in the more precise terms
\[
\text{Law}\Bigl(\mathbf{\mathcal{A}}|_{\{1,2,\cdots,k\}\times(0,y_0+1)} \text{ conditioned on } \mathcal{F}_k\Bigr) = \mathfrak{B}_{\bar{x},\bar{y}}^f
\]
with entry data $\bar{x} = (\mathcal{A}_i(0))_{1\leq i\leq k}$, $\bar{y} = (\mathcal{A}_i(y_0+1))_{1\leq i\leq k}$ and $f = \mathcal{A}_{k+1}$ which are all $\mathcal{F}_k$-measurable. Notice that for fixed data $\bar{x}, \bar{y}, f$ the measure $\mathfrak{B}_{\bar{x},\bar{y}}^f$ is absolutely continuous with respect to $\mathcal{W}_{\bar{x},\bar{y}}$, that is the law of $k$ independent Brownian bridges on $[0,y_0+1]$ starting at $(0,x_i)$ and ending at $(y_0+1,y_i)$ respectively, for $1\leq i\leq k$.\\

Observe that the choice of interval where the Brownian Gibbs property was applied was made so as to strictly contain $[0,y_0]$ to the right. This implies that given $\mathcal{F}_k$, the endpoints $\mathcal{A}_i(y_0), i = 1,2,\cdots, k$ have densities with respect to the Lebesgue measure, since the Brownian Gibbs property gives absolute continuity with respect to the aforementioned Brownian Bridge measure on the augmented interval $[0,y_0+1]$. The argument in Proposition 4.1 from \cite{corwin2014brownian} now implies that the law of the top $k$ lines of the Airy line ensemble between $[0,y_0]$ given $\mathcal{F}_k$ is absolutely continuous with respect to that of $k$ independent Brownian motions with diffusion parameter $2$ starting from $\mathcal{A}_1(0)>\mathcal{A}_2(0)>\cdots >\mathcal{A}_k(0)$.\\

Hence, using disintegration (Theorem \ref{thm: Disintegration} in the Appendix), we see that the conditional distribution of $\mathcal{A}_i(\cdot)-\mathcal{A}_i(0):[0,y_0]\mapsto\R$ for $i=1,2,\cdots,k$ given $\mathcal{F}_k$ is absolutely continuous with respect to $k$ independent standard Brownian motions $B_1,B_2,\cdots, B_k$ with diffusion parameter $2$.\\

To conclude, condition on the sigma algebra $\mathcal{F}_k$ and use again disintegration, noting that $G_\ell$ is $\mathcal{F}_{-}\subseteq\mathcal{F}_k$-measurable, Theorem \ref{thm: abs cont brownian lpp}, and the fact that last passage percolation over the Airy ensemble is a functional of the centered process $\mathcal{A}(\cdot)-\mathcal{A}(0)$.

\end{proof}

Now that Proposition \ref{prop: compactly supp data KPZ} has been established, one can generalise it and obtain the following proposition.
\begin{prop}[1-finitary data]\label{prop: Brownian 1-finitary}For any \textbf{continuous} $1$-finitary initial condition $h_0$, the random function 
\[
h(y) = \sup_{x\in\R}(h_0(x)+\mathcal{S}(x,y))
\]
is Brownian on compacts.
\end{prop}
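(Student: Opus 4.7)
The plan is to reduce from a general continuous $1$-finitary initial condition $h_0$ to the compactly supported case handled by Proposition \ref{prop: compactly supp data KPZ}, via a truncation argument. The starting point is the observation, already developed in the discussion around \eqref{eq: compact containment}, that a $1$-finitary condition together with the directed landscape growth estimate forces the supremum defining $h(y)$ to be attained on a random compact set $[L,R]$, uniformly in $y$ over any prescribed compact set. This is precisely what allows us to localise the variational problem and then invoke the compactly supported result.

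Concretely, fix $y_1<y_2$ and consider the compact set $K=[y_1,y_2]$. By the argument preceding \eqref{eq: compact containment}, applied with $t_0=1$, there exist almost surely finite random numbers $L\leq R$ (depending on $K$ and the random constant $C'$ from \eqref{eq: directed l bound}) such that
\[
h(y)=\sup_{x\in[L,R]}(h_0(x)+\mathcal{S}(x,y))\quad\text{for every }y\in[y_1,y_2].
\]
For each $N\in\N$ define the truncated initial condition $h_0^{(N)}$ to agree with $h_0$ on $[-N,N]$ and to equal $-\infty$ elsewhere, and set
\[
h^{(N)}(y):=\sup_{x\in[-N,N]}(h_0(x)+\mathcal{S}(x,y)).
\]
On the event $\Omega_N:=\{[L,R]\subseteq[-N,N]\}$ the two functions coincide on $[y_1,y_2]$; moreover $\Omega_N\uparrow\Omega$ with $\PP(\Omega)=1$. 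Proposition \ref{prop: compactly supp data KPZ} applies to $h_0^{(N)}$ and yields that each $h^{(N)}$ is Brownian on compacts. Therefore, given any Borel set $A\subseteq\mathcal{C}_0([y_1,y_2];\R)$ with $\mu_{[y_1,y_2]}(A)=0$, monotone convergence combined with the equality $h|_{[y_1,y_2]}=h^{(N)}|_{[y_1,y_2]}$ on $\Omega_N$ yields
\[
\PP\bigl(y\mapsto h(y)-h(y_1)\in A\bigr)=\lim_{N\to\infty}\PP\bigl(\{y\mapsto h^{(N)}(y)-h^{(N)}(y_1)\in A\}\cap\Omega_N\bigr)\leq\lim_{N\to\infty}\PP\bigl(y\mapsto h^{(N)}(y)-h^{(N)}(y_1)\in A\bigr)=0,
\]
which is precisely absolute continuity of the law of $y\mapsto h(y)-h(y_1)$ with respect to $\mu_{[y_1,y_2]}$.

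The main obstacle is justifying the application of Proposition \ref{prop: compactly supp data KPZ} to $h_0^{(N)}$, which fails to be globally continuous, having a jump to $-\infty$ at $\pm N$. This is addressed by the observation already invoked in the proof of Lemma \ref{lemma: restriction}: the proof of Proposition \ref{prop: compactly supp data KPZ} only uses continuity of $h_0$ in the interior of its compact support, in order to restrict the optimisation to rationals and to run the decomposition through the Airy line ensemble, and the argument carries through essentially unchanged when the initial condition drops to $-\infty$ at the endpoints. A cleaner alternative, if desired, is to use smooth cutoffs $\chi_N$ with $\chi_N\equiv 0$ on $[-N,N]$ and $\chi_N(x)\to-\infty$ fast enough outside, producing continuous compactly defined approximations that still admit Proposition \ref{prop: compactly supp data KPZ}, together with the same countable-decomposition argument above to pass to the limit.
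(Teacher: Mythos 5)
Your argument is correct and takes essentially the same route as the paper: both localise to the random interval $[L,R]$ via \eqref{eq: compact containment}, truncate $h_0$ to $[-N,N]$, invoke Proposition \ref{prop: compactly supp data KPZ}, and pass to the limit using the nesting of $\Omega_N=\{[L,R]\subseteq[-N,N]\}$. Your monotone-convergence phrasing is a slightly cleaner version of the paper's sum over $n$ (which, taken literally, over-counts across nested events), and your explicit flag of the continuity-at-endpoints issue is the same caveat the paper disposes of in Lemma \ref{lemma: restriction}.
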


\begin{proof}
Fix $y_1<y_2$ and let $L,R$ be as in \ref{eq: compact containment}, since we are working on the compact set $[y_1, y_2]\times\{1\}\subset \R\times(0,1]$. Thus
\[
h(y) = \sup_{x\in[L,R]}(h_0(x)+\mathcal{S}(x,y))\,.
\]
Now, for $n\in \N$, define the continuous compactly contained data 
\[
h_0^n(x) = \begin{cases} h(x) & x\in [-n,n]\\
-\infty& \text{otherwise}
\end{cases}
\]
and set
\[
\mathcal{H}_n(y) \coloneqq \sup_{x\in[-n,n]}(h_0^n(x)+\mathcal{S}(x,y))\,.
\]
Furthermore, let $\xi_n$ be the law of $\mathcal{H}_n(y)- \mathcal{H}_n(y_1)$ on $[y_1, y_2]$. Also, let $\mu$ be the law of a standard Brownian motion with diffusion parameter $\sigma^2 = 2$ starting from $(y_1, 0)$ on $[y_1,y_2]$. By Proposition \ref{prop: compactly supp data KPZ}, $\xi_n\lll\mu$ for all $n\in\N$. Thus for any measurable $A\in \mathcal{C}_0[y_1,y_2]$, a simple computation gives
\[
\mathbb{P}(\{h(y)-h(y_1)\in A\}) = \displaystyle \sum_{n=1}^\infty \mathbb{P}(\{h(y)-h(y_1)\in A\}\cap\{[L,R]\subseteq[-n,n]\})\]
\[
= \displaystyle \sum_{n=1}^\infty \mathbb{P}(\{\mathcal{H}_n(y)-\mathcal{H}_n(y_1)\in A\}\cap\{[L,R]\subseteq[-n,n]\}) \leq \displaystyle \sum_{n=1}^\infty \xi_n(A)  =  0\,,
\]
hence establishing the desired absolute continuity.
\end{proof}

Combining all of the above, we now are ready to embark on the proof of Theorem \ref{thm: Absolute continuity KPZ} proper. See flowchart \ref{fig:flowchart}, for a summary of the key steps of the proof.

\begin{proof}[Proof (Theorem \ref{thm: Absolute continuity KPZ})]
    
    The above considerations, in particular scaling arguments for the directed landscape allow us to reduce the proof of the theorem to the case $t=1$. Now, first suppose that the initial data $h_0(\cdot)$ is $1$-finitary and \textit{compactly defined}. That is there exist  $x_1<x_2$ such that $h_0$ is bounded above on the interval $[x_1,x_2]$ and $h_0(x)=-\infty$ for all $x\not\in[x_1,x_2]$. We now need to show that 
    \[
    h(y) = \sup_{x\in\R}(h_0(x)+\mathcal{S}(x,y)) \stackrel{d}{=} \sup_{x\in\R}(h_0(x)+\mathcal{L}(x,0;y,1))
    \]
    is Brownian on compacts. By the metric composition law for the directed landscape \ref{E:dland-metric},\newline 
    \[
    \mathcal{L}(x,0;y,1) = \displaystyle\sup_{z\in\R}(\mathcal{L}(x,0;z,1/2)+\mathcal{L}(z,1/2;y,1)).
    \]
    Some algebraic manipulation gives
    \[
    h(y) = \sup_{x\in\R}\Bigl(h_0(x)+\displaystyle\sup_{z\in\R}\Bigl(\mathcal{L}(x,0;z,1/2)+\mathcal{L}(z,1/2;y,1)\Bigr)\Bigr) 
    \]
    \[
    = \displaystyle\sup_{z\in\R}\Bigl(\mathcal{U}(z)+\mathcal{L}(z,1/2;y,1)\Bigr)\,,
    \]
    where $\mathcal{U}(z) = \displaystyle\sup_{x\in\R}(h_0(x)+\mathcal{L}(x,0;z,1/2))$, $z\in\R$. Note that the independent increment property of the directed landscape yields the independence of $\mathcal{U}(\cdot)$ and $\mathcal{L}(\cdot,1/2;y,1)$. Furthermore, a pointwise bound on difference in $z$ of $h_0(x)+\mathcal{L}(x,0;z,1/2)$, $x\in [x_1, x_2]$ yields the bound
    \[
    |\mathcal{U}(z_1)-\mathcal{U}(z_2)|\leq \displaystyle\sup_{x\in[x_1,x_2]}|\mathcal{L}(x,0;z_1,1/2)-\mathcal{L}(x,0;z_2,1/2)|.
    \]
    This establishes that $\mathcal{U}$ is a continuous process since the optimisation happens over a compact interval and the directed landscape itself is a $\mathcal{C}(\R^4_{\uparrow};\R)$-valued process. We now bound $\mathcal{U}$ from above. To this end, the growth estimate \ref{eq: directed landscape growth estimate} of the directed landscape gives for $x\in[x_1, x_2]$
    \[
    |\mathcal{L}(x,0;z,1/2) + 2(x-z)^2| \leq C(1+|x|^{1/5}+|z|^{1/5}+|1/2|^{1/5})\theta(1/2)
    \]
    which implies
    \[
    \mathcal{L}(x,0;z,1/2) \leq C(1+|x|^{1/5}+|z|^{1/5}+|1/2|^{1/5}) - 2(x-z)^2
    \]
    \[
    \leq C(|z|^{1/5}+1)\,,
    \]
    where $C>0$ is a random constant that changed from line to line. Thus,
    \[
    \mathcal{U}(z)\leq \displaystyle\sup_{x\in[x_1,x_2]}h_0(x)+\displaystyle\sup_{x\in[x_1,x_2]}\mathcal{L}(x,0;z,1/2)
    \leq C(|z|^{1/5}+1) 
    \]
    for some random constant $C>0$. Hence,
    \[
    \frac{\mathcal{U}(z)-2z^2}{|z|} \leq \frac{C(|z|^{1/5}+1)-2z^2}{|z|}\to-\infty, \quad |z|\to\infty.
    \]
    Thus, $\mathcal{U}(z)$ is almost surely $1/2$-finitary. Recall from the scaling property of the directed landscape in \ref{D:directed-landscape}, we have 
    \[
    \mathcal{L}(z,1/2;y,1) \stackrel{d}{=} 2^{-1/3}\mathcal{S}(2^{2/3}x, 2^{2/3}y)\,.
    \]
    Hence, we have 
    \[
    2^{1/3}h(2^{-2/3}y) \stackrel{d}{=}\sup_{z\in\R}\Bigl(\mathcal{S}(z,y)+2^{1/3}\mathcal{U}(2^{-2/3}z)\Bigr).
    \]
    Since $\hat{\mathcal{U}}(\cdot) = 2^{1/3}\mathcal{U}(2^{-2/3}\cdot)$ is continuous and almost surely $1$-finitary, Proposition \ref{prop: Brownian 1-finitary} gives that $h$ is locally Brownian after conditioning on $\mathcal{L}(\cdot,0;\cdot,1/2)$ and using independence, see Lemma \ref{lemma:cond independence}.\\

    Finally, for an arbitrary $1$-finitary initial condition $h_0$, we have previously seen that equation \ref{eq: compact containment} is satisfied, and so the proof proceeds by essentially repeating the argument in Proposition \ref{prop: Brownian 1-finitary}. 

\end{proof}

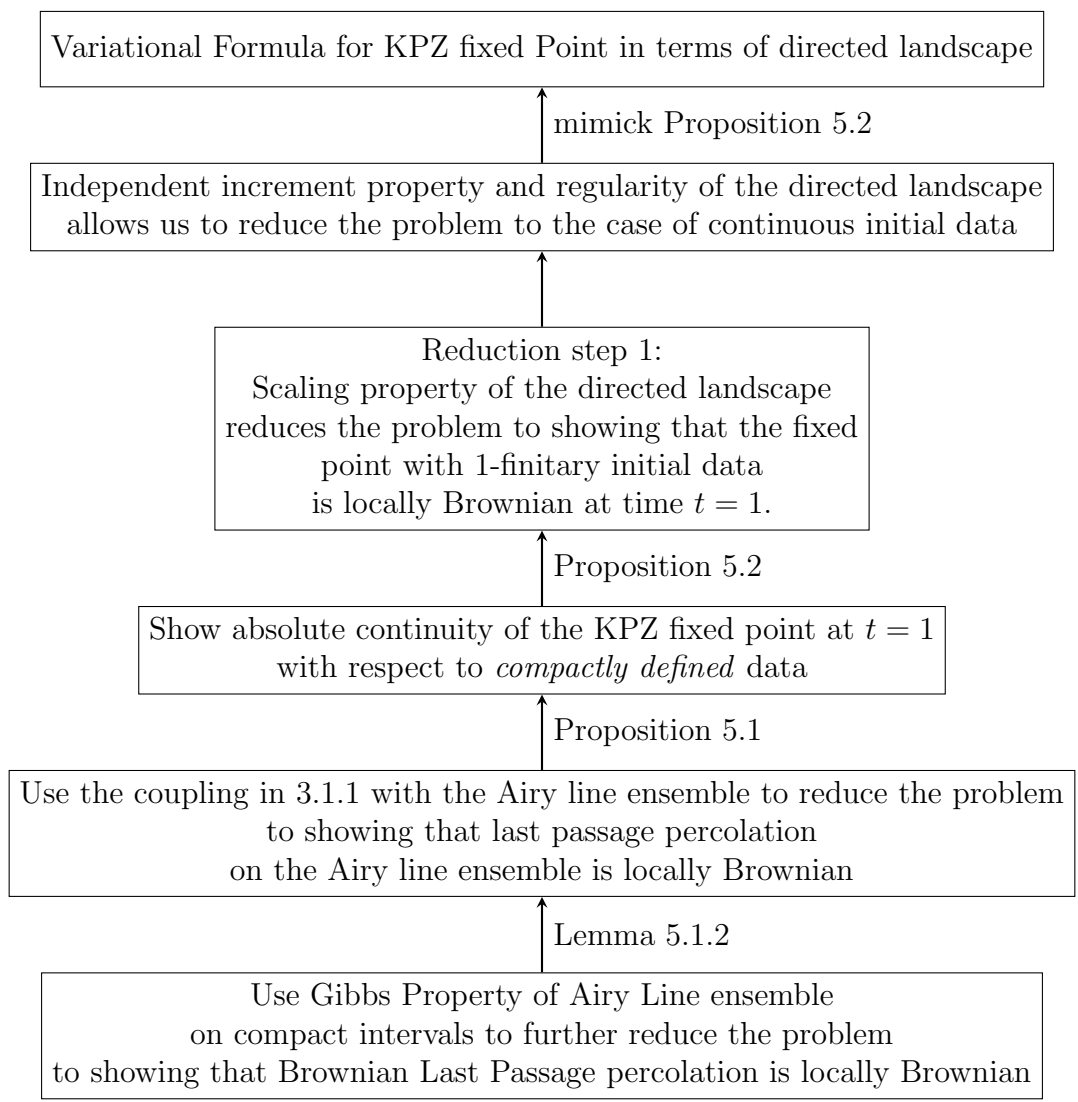
\begin{figure}[H]
  \vspace{1cm}
  \begin{center}
  \begin{tikzpicture}[
    node distance=1.5cm,
    startstop/.style={rectangle, minimum width=3cm, minimum height=1cm, text centered, draw=black},
    process/.style={rectangle, minimum width=3cm, minimum height=1cm, text centered, draw=black, align=center},
    decision/.style={rectangle, minimum width=3cm, minimum height=1cm, text centered, draw=black},
    arrow/.style={thick,->,>=stealth}
  ]

  \node (start) [startstop] {Variational Formula for KPZ fixed Point in terms of directed landscape};
\node (det) [process, below=1cm of start] {Independent increment property and regularity of the directed landscape\\
allows us to reduce the problem to the case of continuous initial data };
  \node (reduction1) [process, below=1cm of det] {Reduction step 1: \\
    Scaling property of the directed landscape\\
    reduces the problem to showing that the fixed\\
    point with 1-finitary initial data\\
    is locally Brownian at time $t=1$.};
    
  \node (reduction2) [process, below=1cm of reduction1] {Show absolute continuity of the KPZ fixed point at $t=1$\\
  with respect to \textit{compactly defined} data};
  \node (coupling) [process, below=1cm of reduction2] {Use the coupling in \ref{def: Airy Sheet} with the Airy line ensemble to reduce the problem\\ to showing that last passage percolation\\
  on the Airy line ensemble is locally Brownian};
  \node (gibbs) [process, below=1cm of coupling] {Use Gibbs Property of Airy Line ensemble\\
  on compact intervals to further reduce the problem\\
  to showing that Brownian Last Passage percolation is locally Brownian};

  \draw [arrow] (det) -- node[anchor=west] {mimick Proposition \ref{prop: Brownian 1-finitary}} (start);
  \draw [arrow] (reduction1) -- (det);
  \draw [arrow] (reduction2) -- node[anchor=west] {Proposition \ref{prop: Brownian 1-finitary}} (reduction1);
  \draw [arrow] (coupling)-- node[anchor=west] {Proposition \ref{prop: compactly supp data KPZ}}(reduction2) ;
  \draw [arrow]  (gibbs) -- node[anchor=west] {Lemma \ref{lemma: Gibbs}}(coupling);

  \coordinate[right=1cm of coupling] (dummy);
\end{tikzpicture}
\end{center}
  \caption{Flowchart of main steps in the proof of Theorem \ref{thm: Absolute continuity KPZ}.}
  \label{fig:flowchart}
\end{figure}

\chapter{Conclusion and future directions}

The proof of the locally Brownian nature of the KPZ fixed point under $t$-finitary initial conditions, the main result in \cite{sarkar2021brownian}, is the culmination of years of work spent trying to understand the probabilistic nature of the KPZ fixed point, conjectured to be the universal scaling limit, under the 3:2:1 scaling limit of a large class of random growth processes.\\

There are multiple directions for further exploration. There are several open problems pertaining to the geometry of random geodesics in the directed landscape, which all can be found in \cite{DOV}. These are not going to be touched upon, since they were not the direct focus of the essay. There are however, unanswered questions regarding the regularity of the Radon Nikodym derivative of the KPZ fixed point on compacts which we discuss below.\\

More precisely, one would like to know for which $p\in (1,\infty)$ the Radon Nikodym derivative is in $L^p$. This would be a desirably property to have since it would quantitatively strengthen the relationship between low-probability events of Brownian motion and that of the KPS fixed point \cite{calvert2019brownian}. More generally, in the setting of two finite measures $\mu\lll\nu$, one wants if possible to quantify the relationship between the $\delta>0$ and $\epsilon>0$ so that the implication $\nu(A)<\delta$ guarantees $\mu(A)<\epsilon$ for all measurable $A$\footnote{Recall the definition of absolute continuity of measures $\mu$ with respect to $\nu$, namely, that for all $\epsilon>0$, there exists $\delta>0$ such that for all $A$ measurable, if $\nu(A)<\delta$, then $\mu(A)<\epsilon$.}. This can be achieved if for instance one imposes that the Radon-Nikodym derivative $d\mu/d\nu\in L^p$, for some $p>1$. Then, for $A$ measurable, 
\[
\mu(A) = \displaystyle \int_{A}\frac{d\mu}{d\nu}d\nu
\leq \Bigl(\int_{A}\Bigl(\frac{d\mu}{d\nu}\Bigr)^pd\nu\Bigr)^{\frac{1}{p}}(\nu(A))^{\frac{p-1}{p}}
= \left\lVert\frac{d\mu}{d\nu}\right\rVert_{L^p(\nu)}(\nu(A))^{1-\frac{1}{p}}
\]
by applying H\"{o}lder's inequality. Thus, taking 
$\nu(A)<\delta$ gives
\[
\mu(A) \leq \left\lVert\frac{d\mu}{d\nu}\right\rVert_{L^p(\nu)}\delta^{1-\frac{1}{p}}
= \delta^{1-\frac{1}{p}+\frac{\log(\|d\mu/d\nu\|_{L^p(\nu)})}{\log(\delta)}} = \delta^{1-\frac{1}{p}-o(1)}
\]
as $\delta \searrow 0$. Hence, if the Radon-Nikodym  derivative was in all $L^p(\nu)$ for $p>1$, then taking $p\to\infty$ one would arrive at the bound $\mu(A)<\delta^{1-o(1)}$. Note that the case $p=1$ follows directly from absolute continuity and the finiteness of all measures involved and does not give a non-trivial estimate of the above form.\\

Some progress had already been made for the \textit{narrow wedge at $u$} initial data \[
h_0^u(x) = \begin{cases}
    0 & x = u\\
    -\infty & x\neq u
\end{cases}
\]
in \ref{eq: UC} in \cite{corwin2014brownian} it was shown that the top line of the Airy line ensemble, $\mathcal{A}_1$, or otherwise known as the Airy-2 process, is Brownian on compacts using the \textit{Brownian Gibbs property}. Recently, in \cite{calvert2019brownian} the Radon Nikodym derivative was shown to be in $L^p$ for all $p\in (1,\infty)$.\\

However, for general initial conditions, there is no such absolute continuity result to the present date. A form of Brownian regularity, called \textit{patchwork quilt of Brownian fabrics} was established in \cite{hammond2019patchwork} and \cite{calvert2019brownian}, the content of which states roughly that the KPZ fixed point $h(\cdot)$ on a unit interval is divided into a random number of subintervals, the patches, with random boundary points in such a way that the restriction of the profile to each patch is absolutely continuous with respect to a Brownian motion with Radon-Nikodym derivative in $L^p$ for all $p<3$. The content of conjecture $1.3$ in \cite{hammond2019patchwork} it to establish this with a single patch and show that the Radon-Nikodym derivative is in $L^p$ for all $p>0$. That the random number of such patches has polynomial tail bounds was shown in Theorem 6.10 in \cite{calvert2019brownian}, clarifying some aspects of the problem, though the above conjecture still remains open.\\

Theorem \ref{thm: Absolute continuity KPZ} amounts to establishing absolute continuity with a \textit{single} patch, see equation \ref{eq: compact containment}. The proof of Theorem \ref{thm: Absolute continuity KPZ} proceeds along very different lines than that in \cite{hammond2019patchwork} and \cite{calvert2019brownian}, save for the use of the Brownian Gibbs property. This suggests novel ideas will be needed to make progress.\\

Finally, we make some preliminary observations and point to parts of the proof of Theorem \ref{thm: Absolute continuity KPZ}, which should be related to improving the regularity of the KPZ fixed point. Observe that in the proof of Proposition \ref{prop: compactly supp data KPZ} for compactly supported data $h_0$, the KPZ fixed point has the expression 
\[
h(y) = \max_{\ell\leq L_0}(G_{\ell}+\mathcal{A}[(0,\ell)\to (y,1)]) = \max_{\ell\leq L_0}\mathcal{H}_\ell(y)
\]
with $\mathcal{H}_k(y) = \displaystyle\max_{\ell\leq k}(G_{\ell}+\mathcal{A}[(0,\ell)\to (y,1)])$, $L_0 = \pi[x'\to y'](0)$ and $x'\geq x_0,y'\geq y_0\in\Q_+$. See figure \ref{fig: Airy geodesic} for a graphical representation of $L_0$. One has that for all $n\in\N$, $\xi_n\lll\mu$, where $\xi_n$ is the law of $\mathcal{H}_k(y)$ restricted to $[1,y_0]$. It thus appears plausible to prove statements about regularity of the Radon Nikodym derivative by analysing the behaviour of $L_0$, in particular by controlling its tails. Similarly, a refined analysis of the randomness in the random interval $[L,R]$, appearing in the the more general case of $1$-finitary initial data in Proposition \ref{prop: Brownian 1-finitary} could also lead to further insights. Observe that aside from the initial condition $h_0$,the randomness depends on the deterministic size of the compact set under consideration and the random constant $C>0$ in Corollary $10.7$ in \cite{DOV}, which has exponential tails. 

\begin{figure}[H]
\vspace{4cm}
\includegraphics[width= 400pt]{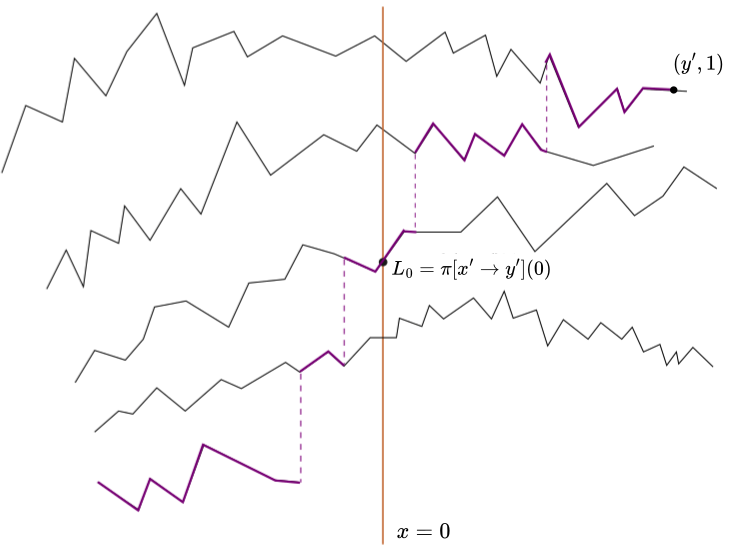}
\centering
\caption{Above is displayed the point $(0,L_0)$ at which the last passage path $\pi[x'\to y']$ on the Airy line ensemble $\mathcal{A} = (\mathcal{A}_1, \mathcal{A}_2,\cdots)$ (\color{purple} purple\color{black}) meets with the axis $\{x=0\}$, where $y'>1$. Here $L_0 = 3$ and the first four lines of $\mathcal{A}$ are shown. The last passage path $\pi[x'\to y']$ is defined in Definition 3.3 in \cite{sarkar2021brownian} (it essentially is a pointwise limit of geodesics on the Airy line ensemble with right endpoint fixed and left endpoint going \textquotedblleft down and to the left \textquotedblright on the ensemble).}
\label{fig: Airy geodesic}
\end{figure}

\appendix

\chapter{Measure theoretic results}

Below is quoted for convenience Theorem $8.5$ from \cite{kallenberg1997foundations}, which is a key result in modern probability theory and is frequently used when manipulating conditional expectations. First we make the definition of a \textit{regular conditional distribution} of a random element $\xi$ on some probability space $(\Omega, \mathcal{A}, \mathbb{P})$ taking values in a measurable space $(S,\mathcal{S})$.\\

\begin{definition}[Regular conditional distribution]
    With $\xi$, $\Omega, S$ as above and a sub-sigma algebra $\mathcal{F}\subset\mathcal{A}$. Then, by a \textit{regular conditional distribution} of $\xi$, we mean that there exists a function $\mu:\Omega\times\mathcal{S}\to \R_{+}$ such that $\mu(\omega,B)$
    \begin{itemize}
        \item is $\mathcal{F}$-measurable in $\omega\in \Omega$ for all $B\in\mathcal{S}$
        \item is a measure in $B\in\mathcal{S}$ for each $\omega\in \Omega$
        \item $\mu(\omega,S)=1$ for all $\omega\in \Omega$
    \end{itemize}
    and for all $B\in S$, $\mathbb{P}[\xi\in B|\mathcal{F}](\cdot)=\mu(\cdot, B)$ $\mathbb{P}$-a.s.
\end{definition}

\begin{theorem}\label{thm: Disintegration}
    Let $(\Omega, \mathcal{A}, \mathbb{P})$ be a probability space and $\xi$ and $\eta$ be random elements in two measurable spaces $(S,\mathcal{S}),(T, \mathcal{T})$ respectively. Let $\mathcal{F}$ be a sub-$\sigma$ algebra of $\mathcal{A}$ such that the conditional distribution $\mathbb{P}[\xi\in \cdot|\mathcal{F}]$ has a regular version $\mu:\Omega\times\mathcal{S}\to \R_{+}$ and $f$ be a measurable function on $S\times T$ with $\mathbb{E}[|f(\xi, \eta)|]<\infty$. If $\eta$ is $\mathcal{F}-$measurable, then 
    \[\mathbb{E}[f(\xi,\eta)|\mathcal{F}] = \displaystyle \int \mu(ds)f(s,\eta)\quad \mathbb{P}-a.s. \]
\end{theorem}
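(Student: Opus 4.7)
The plan is to invoke the standard measure-theoretic ``machine,'' bootstrapping from indicators of measurable rectangles to arbitrary integrable $f$ using the fact that $\mu$ is a bona fide kernel, not merely a version of $\mathbb{P}[\xi\in\cdot\,|\,\mathcal{F}]$ up to null sets.

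First I would verify the identity for $f(s,t) = \mathbf{1}_A(s)\mathbf{1}_B(t)$ with $A\in\mathcal{S}$ and $B\in\mathcal{T}$. Since $\eta$ is $\mathcal{F}$-measurable, so is $\mathbf{1}_B(\eta)$, and pulling it out of the conditional expectation the defining property of $\mu$ gives
\[
\mathbb{E}[\mathbf{1}_A(\xi)\mathbf{1}_B(\eta)\,|\,\mathcal{F}] \;=\; \mathbf{1}_B(\eta)\,\mu(\cdot,A)\quad\mathbb{P}\text{-a.s.},
\]
which is precisely $\int\mu(\cdot,ds)\,\mathbf{1}_A(s)\mathbf{1}_B(\eta)$. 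Next, I would use Dynkin's $\pi$-$\lambda$ theorem to extend the identity to $f = \mathbf{1}_C$ for arbitrary $C\in\mathcal{S}\otimes\mathcal{T}$: the collection of such $C$ is a $\lambda$-system (both sides being linear and $\sigma$-additive in $f$) containing the $\pi$-system of measurable rectangles, so it exhausts $\mathcal{S}\otimes\mathcal{T}$.

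From there the remainder follows the usual progression: linearity to non-negative simple $\mathcal{S}\otimes\mathcal{T}$-measurable functions, monotone convergence to non-negative measurable $f$ (both sides commute with monotone increasing limits in $f$), and finally decomposition $f = f^+ - f^-$ together with the hypothesis $\mathbb{E}[|f(\xi,\eta)|]<\infty$---which ensures both pieces have finite conditional expectation almost surely---to handle general integrable $f$.

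The main obstacle I anticipate is verifying, at each stage of the bootstrap, that the map $\omega\mapsto\int\mu(\omega,ds)\,f(s,\eta(\omega))$ is well-defined and $\mathcal{F}$-measurable. This is precisely where the \emph{regularity} of $\mu$ (the joint measurability of $(\omega,B)\mapsto\mu(\omega,B)$ together with $\sigma$-additivity in $B$ for each fixed $\omega$) does genuine work, as opposed to a mere conditional-expectation version defined only up to null sets. Once this measurability is secured via a Fubini-type argument applied to the kernel $\mu$ and the integrand $(s,\omega)\mapsto f(s,\eta(\omega))$, the extension steps outlined above are purely mechanical.
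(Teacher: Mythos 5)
The paper does not prove this statement; it is quoted verbatim (as Theorem 8.5) from Kallenberg's \emph{Foundations of Modern Probability}, so there is no in-text proof to compare against. Your proposed argument is the standard one, and it is essentially the proof Kallenberg gives: verify the identity on products of indicators $\mathbf{1}_A(s)\mathbf{1}_B(t)$ by pulling the $\mathcal{F}$-measurable factor $\mathbf{1}_B(\eta)$ outside the conditional expectation, extend to indicators of arbitrary $C\in\mathcal{S}\otimes\mathcal{T}$ by a $\pi$-$\lambda$ (or functional monotone class) argument, then to nonnegative measurable $f$ by linearity and monotone convergence, and finally to integrable $f$ via $f=f^{+}-f^{-}$.

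Two small points worth making explicit if you flesh this out. First, in the $\lambda$-system step the almost-sure identity has an exceptional null set that may depend on $C$; this is harmless because a $\lambda$-system is built from countably many operations, but it should be said, and it is exactly the place where having a genuine kernel $\mu$ (so that $\omega\mapsto\int\mu(\omega,ds)\,\mathbf{1}_C(s,\eta(\omega))$ is a well-defined $\mathcal{F}$-measurable function for \emph{every} $C$, not merely a.e.-defined) matters. Second, the measurability of $\omega\mapsto\int\mu(\omega,ds)\,f(s,\eta(\omega))$ that you flag as the main obstacle does indeed follow from the usual Fubini--Tonelli argument for kernels: the map $(\omega,s)\mapsto f(s,\eta(\omega))$ is $\mathcal{F}\otimes\mathcal{S}$-measurable because $\eta$ is $\mathcal{F}$-measurable and $f$ is $\mathcal{S}\otimes\mathcal{T}$-measurable, and integrating a jointly measurable nonnegative function against a kernel preserves measurability. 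You correctly identify that this is where regularity of $\mu$ does the real work. The proof is complete and correct once those details are spelled out.
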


\begin{remark}
    To guarantee the existence of such regular version of conditional distributions, in general on needs to impose some regularity constraints on the space $S$, namely that it is Borel, see Theorem $1.8$ in \cite{kallenberg1997foundations}. Examples of Borel spaces are complete metric spaces, i.e. \textit{Polish} spaces. In particular all the function spaces considered with their respective uniform topologies on compacts sets are Polish, see \cite{kallenberg1997foundations}.
\end{remark}

We now prove a useful result involving absolute continuity, when independence is involved. In this particular case, computing the regular conditional distribution is rather simple, due to independence, though instructive in illustrating how the more general case works.\\

Let $(\Omega, \mathcal{F}, \mathbb{P})$ be a probability space and $I\subseteq \R$ be a closed interval. Suppose that $X$ is an $\mathcal{C}(I\times\R;\R)$-valued random variable and $Y$ is an $\mathcal{C}(\R;\R)$-valued random variable, with respect to the corresponding Borel sigma algebras over the topologies of local uniform convergence. Further, let $F:\mathcal{C}(I\times\R;\R)\times \mathcal{C}(\R;\R)\to \mathcal{C}(\R;\R)$ denote the functional
 \[
 (f,g)\mapsto \Bigl(y\mapsto \displaystyle\max_{x\in\R}(g(x)+f(y,x))\Bigr)
 \]
 Essentially the same argument as Lemma $4.1$ in \cite{schilling2021brownian} shows that the Borel sigma algebra on a space of functions with the usual sigma algebra mentioned above is the trace sigma algebra of the cylindrical sigma algebras on the respective spaces of functions. This implies that the functional $F$ is measurable, since it suffices to consider the measurability of the projections onto each coordinate in $I$. We can thus prove the following.\\

\begin{lemma}\label{lemma:cond independence}
   Suppose now that $X$ and $Y$ are independent. If the law $\xi_x$ of $F(x,Y)$ is absolutely continuous for all $x\in\R$ with respect to some finite measure $\mu$. Then, 
   \[
   \xi\lll\mu
   \]
   where $\xi$ is the law of $F(X,Y)$.
\end{lemma}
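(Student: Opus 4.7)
The plan is to leverage the independence of $X$ and $Y$ together with the Disintegration Theorem (Theorem \ref{thm: Disintegration}) to reduce the absolute continuity of $\xi$ to the pointwise hypothesis $\xi_x\lll\mu$. I would fix an arbitrary Borel set $A\subseteq \mathcal{C}(\R;\R)$ with $\mu(A) = 0$ and aim to show $\xi(A) = \mathbb{P}(F(X,Y)\in A) = 0$. The key observation is that, conditionally on $\sigma(X)$, the law of $F(X,Y)$ is precisely $\xi_{X(\omega)}$ thanks to the independence of $X$ and $Y$; the hypothesis then kills the probability fiberwise.

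First I would set up the disintegration framework. The spaces $\mathcal{C}(I\times\R;\R)$ and $\mathcal{C}(\R;\R)$ with their local uniform topologies are Polish, hence Borel in the sense required by Theorem \ref{thm: Disintegration}, as noted in the remark following that theorem. Thus a regular conditional distribution $\mu(\omega,\cdot)$ of $Y$ given $\sigma(X)$ exists, and by independence of $X$ and $Y$ it equals, almost surely, the (deterministic) marginal law $\mathbb{P}_Y$ of $Y$. The measurability of $F$, which follows from the projection/cylinder-set argument mentioned just before the lemma statement, makes $\mathbf{1}_A\circ F$ a bounded measurable function on the product space.

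Applying Theorem \ref{thm: Disintegration} with $\eta := X$ (which is trivially $\sigma(X)$-measurable) and $f = \mathbf{1}_A\circ F$, I would obtain, almost surely,
\[
\mathbb{P}\bigl[F(X,Y)\in A\,\bigm|\,\sigma(X)\bigr](\omega)=\int \mathbf{1}_A\bigl(F(X(\omega),y)\bigr)\,\mathbb{P}_Y(dy)=\xi_{X(\omega)}(A).
\]
Since $\xi_x\lll\mu$ for every realization $x$ of $X$ and $\mu(A)=0$, the right-hand side vanishes almost surely. Taking expectations gives $\xi(A)=\mathbb{E}[\xi_{X}(A)]=0$, so $\xi\lll\mu$.

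The only step requiring genuine care, and hence the main (mild) obstacle, is ensuring that the map $\omega\mapsto \xi_{X(\omega)}(A)$ is a bona fide $\sigma(X)$-measurable random variable, which is needed before taking the outer expectation. This is handled automatically by invoking the regular conditional distribution on the Polish product space, which produces a jointly measurable kernel; alternatively one can argue it directly by a monotone class argument starting from cylinder sets in $\mathcal{C}(\R;\R)$, where measurability of $x\mapsto \xi_x(A)$ follows from continuity of $F$ in its first variable and Fubini on $\mathbb{P}_Y$.
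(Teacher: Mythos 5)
Your proof is correct, and it reaches the same essential computation as the paper's: writing $\xi(A)$ as an integral of $\xi_x(A)$ against the law of $X$, which vanishes because $\xi_x\lll\mu$ pointwise. The only difference is in the machinery used to get there. The paper's argument is more elementary: it invokes independence to identify the joint law of $(X,Y)$ with the product measure $\eta\otimes\nu$, then applies Fubini's theorem to obtain
\[
\xi(A)=\int\Bigl(\int \mathbf{1}_{F(x,\cdot)^{-1}(A)}(y)\,d\nu(y)\Bigr)d\eta(x)=\int\xi_x(A)\,d\eta(x)=0\,.
\]
You instead invoke the Disintegration Theorem (Theorem \ref{thm: Disintegration}) through the regular conditional distribution of $Y$ given $\sigma(X)$, which by independence is the marginal $\mathbb{P}_Y$, to arrive at the conditional-expectation form $\mathbb{E}[\mathbf{1}_A(F(X,Y))\mid\sigma(X)]=\xi_{X(\omega)}(A)$ almost surely, and then take expectations. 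Both routes are valid; yours is slightly heavier because it requires the Polish-space hypothesis for the existence of the regular conditional distribution (as you correctly note), whereas Fubini on the product space sidesteps that discussion. In exchange, your conditional framing makes the role of independence more transparent and dispatches measurability of $\omega\mapsto\xi_{X(\omega)}(A)$ for free, a point the paper's proof leaves implicit in the application of Fubini. Your concluding remark about a monotone class / cylinder-set backup argument is fine but unnecessary; once Fubini (or the disintegration kernel) is in hand, measurability of the inner integral as a function of $x$ is automatic.
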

\begin{proof}
    Let $A$ be a Borel set in $\mathcal{C}(\R;\R)$ such that $\mu(A) = 0$. Then, we compute
    \[
    \xi(A) = \mathbb{P}(\{F(X,Y)\in A\}) = \mathbb{E}[\mathbf{1}_{F^{-1}(A)}(X,Y)]
    \]
    \[
    = \displaystyle\int \mathbf{1}_{F^{-1}(A)}(x,y)d\eta\otimes d\nu\,,
    \]
    where $\nu$ is the law of $Y$, $\eta$ is the law of $X$ and we used the change of variables theorem for measure-theoretic integrals and the independence of $X,Y$. By Fubini's theorem, we obtain 
    \[
    \xi(A) = \displaystyle\int \Bigl(\int \mathbf{1}_{F^{-1}(A)}(x,y)d\nu(y) \Bigr)d\eta(x)
    \]
    \[
    = \displaystyle\int \Bigl(\int \mathbf{1}_{F^{-1}(A)}(x,y)d\nu(y) \Bigr)d\eta(x) = \displaystyle\int \Bigl(\int \mathbf{1}_{F(x,\cdot)^{-1}(A)}(y)d\nu(y) \Bigr)d\eta(x)
    \]
    \[
    = \displaystyle\int \xi_x(A)d\eta(x) = 0
    \]
    by another application of change of variables. Hence, $\xi\lll\mu$.
\end{proof}

\chapter{Local Brownianness and regularity implications}
For convenience, we recall the definition of \textit{locally Brownian}. We say that random function $F$ in $\mathcal{C}(\R;\R)$ is called {\bf Brownian on compacts} if for all $y_1<y_2$, the law $\eta_{y_1, y_2}$ of the $\mathcal{C}_0[y_1,y_2]$-valued random variable
$$y\mapsto F(y)-F(y_1)$$
is absolutely continuous with respect to the law of a Brownian motion starting from $(y_1,0)$ with diffusion parameter $\sigma^2$ on $[y_1,y_2]$, call it $\mu_{y_1, y_2}$. Owing to convention, herein we use $\sigma^2=2$. We now prove the following.\\

In the paper of Quastel, Remenik and Matetski \cite{matetski2021kpz} that preceded the works \cite{DOV} and \cite{sarkar2021brownian}, another notion of local Brownianness is introduced, which we call \textbf{weak local Brownianness}. 

\begin{definition}{\bf (Weak Locally Brownian behavior)}\label{locbr}
The random function $F$ in $\mathcal{C}(\R;\R)$ is said to be \textbf{weakly locally Brownian} if 
\begin{equation}\label{eq: Brownian scaling}
F_\epsilon(t)=\epsilon^{-1}(F(\epsilon^2 t)-F(0))
\end{equation}
converges in law to a Brownian motion $B$ as $\epsilon\to 0$, in the uniform-on-compact topology.
\end{definition} 

\begin{remark}
In the work \cite{matetski2021kpz}, in Theorem $4.14$, they proved that the KPZ fixed point constructed as a scaling limit of the TASEP process (see section \ref{sec: non-gaussian}) is weakly locally Brownian.
\end{remark}

The corollary below clarifies the relationship between these notions of local Brownianness, particularly, that being Brownian on compacts implies weak locally Brownian behaviour. Hence, the former is a stronger notion than the latter. The proof proceeds by contradiction and essentially follows the argument in the paper by Dauvergne, Sarkar and Virag \cite{dauvergne2022three}.

\begin{corollary}
\label{c:wlocalbrown} Let the $\mathcal{C}(\R;\R)$-valued random element $F(\cdot)$ have law which is Brownian on compacts. Then $F(\cdot)$ is weakly locally Brownian.
\end{corollary}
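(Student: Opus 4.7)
The plan is to argue by contradiction, following \cite{dauvergne2022three}. The key idea is that the Brownian-on-compacts hypothesis supplies, on every symmetric interval $[-a,a]$, a Radon--Nikodym density $\rho_a$ of the centred process $F(\cdot)-F(0)$ with respect to a two-sided Brownian motion of diffusion parameter $2$. Brownian self-similarity transfers $\rho_a$ to the rescaled process $F_\epsilon$ on a fixed compact, and the backward martingale convergence theorem pushes $\rho_a\to 1$ in $L^1$ as $a\downarrow 0$; these two ingredients together rule out any non-trivial deviation in the limit.

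Fix $T>0$ and suppose, for contradiction, that the laws $\xi_\epsilon$ of $F_\epsilon|_{[-T,T]}$ fail to converge weakly to $\nu$, the law of a two-sided Brownian motion $B$ on $[-T,T]$ with parameter $2$. Then there exist $\delta_0>0$, a bounded continuous $\Phi:\mathcal{C}([-T,T])\to\R$ with $\|\Phi\|_\infty\le M$, and $\epsilon_n\downarrow 0$ along which $|\mathbb{E}[\Phi(F_{\epsilon_n})]-\mathbb{E}[\Phi(B)]|\ge\delta_0$. Applying the Brownian-on-compacts definition with $y_1=-a$ and $y_2=a$, and then post-composing with the measurable map $g\mapsto g-g(0)$ (which preserves absolute continuity and converts a Brownian motion starting at $(-a,0)$ into a two-sided Brownian motion from $0$), one obtains $\eta_a\lll\mu_a$ with density $\rho_a\in L^1(\mu_a)$, where $\eta_a$ and $\mu_a$ denote the laws on $[-a,a]$ of $F-F(0)$ and of the two-sided Brownian motion respectively. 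Now introduce the measurable bijection $S_\epsilon:\mathcal{C}([-\epsilon^2 T,\epsilon^2 T])\to\mathcal{C}([-T,T])$ defined by $S_\epsilon(g)(t)=\epsilon^{-1}g(\epsilon^2 t)$. Brownian self-similarity gives $(S_\epsilon)_*\mu_{\epsilon^2 T}=\nu$, and by definition of $F_\epsilon$ we have $(S_\epsilon)_*\eta_{\epsilon^2 T}=\xi_\epsilon$. Therefore $\xi_\epsilon\lll\nu$ with density $\rho_{\epsilon^2 T}\circ S_\epsilon^{-1}$, and a change of variables yields
\[
|\mathbb{E}[\Phi(F_\epsilon)]-\mathbb{E}[\Phi(B)]|=\left|\int\Phi(S_\epsilon g)(\rho_{\epsilon^2 T}(g)-1)\,d\mu_{\epsilon^2 T}(g)\right|\le M\,\|\rho_{\epsilon^2 T}-1\|_{L^1(\mu_{\epsilon^2 T})}.
\]

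Fix any $a_0>0$ and, for each $a\in(0,a_0]$, identify $\rho_a$ with the $\mathcal{G}_a$-measurable function on $\mathcal{C}([-a_0,a_0])$ obtained by composition with the restriction map, where $\mathcal{G}_a:=\sigma(g(t):t\in[-a,a])$; the tower property then gives $\rho_a=\mathbb{E}_{\mu_{a_0}}[\rho_{a_0}\mid\mathcal{G}_a]$. Viewing the forward and backward halves of the two-sided Brownian motion as the coordinates of a $2$-dimensional Brownian motion, Blumenthal's $0$--$1$ law implies that $\bigcap_{a>0}\mathcal{G}_a$ is $\mu_{a_0}$-trivial; the backward martingale convergence theorem then yields $\rho_a\to\mathbb{E}_{\mu_{a_0}}[\rho_{a_0}]=1$ in $L^1(\mu_{a_0})$ as $a\downarrow 0$. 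Specialising to $a=\epsilon_n^2 T$ forces the right-hand side of the displayed inequality to $0$, contradicting $\delta_0>0$ and completing the argument. The main technical step is justifying the triviality of the germ $\sigma$-algebra $\bigcap_a\mathcal{G}_a$ at $0$ for two-sided Brownian motion, most cleanly handled by packaging the forward and backward halves as a single $2$-dimensional Brownian motion and invoking the standard Blumenthal law; a subsidiary bookkeeping point is verifying that the intrinsic Radon--Nikodym derivative $\rho_a$ on $\mathcal{C}([-a,a])$ really does agree with the conditional expectation $\mathbb{E}_{\mu_{a_0}}[\rho_{a_0}\mid\mathcal{G}_a]$ under the natural identification.
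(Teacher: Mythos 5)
Your proof is correct and takes a genuinely different route from the paper's. The paper argues by contradiction and converts the problem into a law-of-large-numbers statement: it extracts a set $A$ depending only on increments of the rescaled process over a fixed window $[s,t]$, passes to a geometrically decaying subsequence $\epsilon_n$ so that the events $\{F_{\epsilon_n}\in A\}$ depend on pairwise disjoint increment intervals and hence are i.i.d.\ under Brownian law, applies the strong law of large numbers to the sample averages $Y_n$, and transfers the almost-sure statement to $F$'s law via absolute continuity. Your argument replaces this i.i.d./LLN mechanism with a martingale one: you control $|\mathbb{E}\Phi(F_\epsilon)-\mathbb{E}\Phi(B)|$ directly by $\|\rho_{\epsilon^2 T}-1\|_{L^1}$, identify $\rho_a$ with the conditional expectation $\mathbb{E}_{\mu_{a_0}}[\rho_{a_0}\mid\mathcal{G}_a]$ of the density on a larger window, and invoke L\'evy's downward theorem together with Blumenthal's $0$--$1$ law (applied to the $2$-dimensional Brownian motion obtained by packaging the two halves of the two-sided path) to force the density to $1$ in $L^1$ as the window shrinks. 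Both proofs are valid and rest, at bottom, on a zero--one law --- the LLN and Blumenthal are both expressions of the triviality of a tail/germ $\sigma$-algebra --- but your route is more conceptual in that it pinpoints the shrinking-window Radon--Nikodym derivative as the object that must converge, and it incidentally delivers the strictly stronger conclusion that $F_\epsilon \to B$ in total variation on each compact interval, since $2\,d_{\mathrm{TV}}(\xi_\epsilon,\nu)=\|\rho_{\epsilon^2 T}\circ S_\epsilon^{-1}-1\|_{L^1(\nu)}\to 0$, whereas the paper's contradiction argument yields only weak convergence. The reduction from convergence in $\mathcal{C}(\R;\R)$ with the uniform-on-compact topology to convergence of restrictions on each $[-T,T]$ is valid for both arguments and is used implicitly in the paper as well.
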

\begin{proof} 
Assume for a contradiction the existence of an $F(\cdot)$ for which the above implication does not hold. Then, from the definition of weak convergence of measures, there exists a bounded continuous function $G:\mathcal{C}(\R;\R)\to \R$ and $\delta>0$ such that
\[
|\mathbb{E}(G(F_\epsilon))-\mathbb{E}(G(B))|>\delta
\]
along a subsequence of $\epsilon \to 0$. Now, by an approximation argument by simple functions, there exists a measurable set $A$ and a $\delta>0$ so that 
\[|\PP(F_\epsilon \in A)-\PP(B \in A)|>\delta\]
 along some subsequence of $\epsilon \to 0 $. First, assume that \[\PP(F_\epsilon\in A) < \PP(B\in A)-\delta\]
 along that subsequence. Furthermore, notice that since the law of the incremental segments $F_\epsilon(\cdot) - F_\epsilon(s)$ restricted to a time interval $[s,t]$ for some $0<s<t$ determine the law of any continuous process, we can choose the event $A$ so that it depends only on said increments. more explicitly, once convergence is established for such $A$, the argument for more general $A$ follows by a standard monotone class type argument since the above would imply convergence occurs for cylindrical sets too and the collection of sets for which convergence is established is readily seen to be a sigma algebra.\\

Now, pick a further subsequence $\epsilon_n$ so that $\epsilon_n/\epsilon_{n+1}>(t/s)^{1/2}$. Let $X_n=1(F_{\epsilon_n} \in A)$, and let $Y_n=(X_1+\cdots+X_n)/n$. Then
\[\mathbb{E} Y_n=\frac{1}{n}\sum_{i=1}^n\PP(F_{\epsilon_i}\in A)< \PP(B\in A)-\delta\]
for all $n$. Hence
\begin{equation}\label{eq:wtf}
\limsup_n \mathbb{E} Y_n\leq \PP(B\in A)-\delta\,.\
\end{equation}
  Now, if the law of $F$ is that of a Brownian motion, we claim that the $X_i$ are i.i.d. Indeed, to see independence, observe that the condition on the geometric decay of the subsequence $(\epsilon_n)$  implies $t\epsilon_{n+k}^2<t(s/t)^{k-1}\epsilon_n^2\leq s\epsilon_n^2$ for $k\geq 1, n \in \N$. Hence, $X_{k_1}X_{k_2}, \dots, X_{k_l}$ for $k_1<k_2<\dots<k_l$ are mutually independent as the respective events defining them depend on increments of $F$ evaluated on mutually disjoint intervals. The fact that they have the same law follows from the observation that the scaling performed in \ref{eq: Brownian scaling} preserves the law of Brownian motion. Thus, the law of large numbers implies
  \[
    Y_n \to \PP(B \in A),\quad n\to \infty \quad \text{ almost surely}\,.
  \]
   Since this convergence is almost sure, the same holds if the law of $F$ is absolutely continuous with respect to Brownian motion on $[0,1]$\footnote{for $\epsilon_1$ sufficiently small.}. The bounded convergence theorem then implies that
   \[\mathbb{E} Y_n \to \PP(B \in A)\,, \quad n\to \infty\]
   which contradicts \eqref{eq:wtf}. The case where along a subsequence
   \[\PP(F_\epsilon\in A) > \PP(B\in A)-\delta\]
  is dealt with entirely analogously, with essentially the same argument leading one to consider
  \[
  \PP(B\in A) < \liminf_n \mathbb{E} Y_n-\delta
  \]
  and proceed as before, which concludes the proof of the lemma.

\end{proof}
\begin{remark}\label{r:brhighdim} The same result can be slightly generalised, with the same proof, for two-sided Brownian motion scaled near zero, in arbitrary dimensions, that is, let $W$ have law which is absolutely continuous with respect to $B_d$, a $d$-dimensional Brownian motion, on compact sets for some $d\in \N$. Then
$$W_\epsilon(t)=\epsilon^{-1}(W(\epsilon^2t)-W(0))$$
converges in law to $B_d$ as $\epsilon\to 0$ in the uniform-on-compact topology.
\end{remark}

Finally, as a consequence of Theorem \ref{thm: Absolute continuity KPZ}, we also have that the KPZ fixed point enjoys H\"{o}lder $\frac{1}{2}-$ regularity, which is the content of the next corollary.

\begin{corollary}{(H\"{o}lder $\frac{1}{2}-$ regularity in space)}\label{cor: Holder reg}
Let $t>0$ and 
\[
h_t(y) = \sup_{x\in\R}(h_0(x)+\mathcal{S}(x,y)) \stackrel{d}{=} \sup_{x\in\R}(h_0(x)+\mathcal{L}(x,0;y,t))
\]
be the KPZ fixed point at time $t$, where $h_0(\cdot)$ is a $t-finitary$ initial condition. Then for any $\beta\in (0,\frac{1}{2})$ and $M<\infty$, 
\begin{equation}\label{eq: Holder reg}
\displaystyle\lim_{A\to\infty}\PP\left( \lVert h(t)\rVert_{\beta,\space [-M,M]}\geq A\right) = 0\,,
\end{equation}
where $\lVert \cdot \rVert_{\beta,\space [-M,M]}$ denotes the $\beta$-Holder norm on the interval $[-M,M]$.
\end{corollary}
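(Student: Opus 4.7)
The plan is to deduce this regularity directly from Theorem \ref{thm: Absolute continuity KPZ} by transferring the well-known H\"older regularity of Brownian motion through the absolute continuity relation. First, I would observe that the $\beta$-H\"older seminorm
\[
\|f\|_{\beta,[-M,M]} = \sup_{x\neq y \in [-M,M]\cap\Q}\frac{|f(x)-f(y)|}{|x-y|^{\beta}}
\]
(the supremum over rationals agrees with that over $[-M,M]$ by continuity of $h_t$, so the seminorm is a measurable functional on $\mathcal{C}([-M,M];\R)$) is translation-invariant in the sense that $\|f\|_{\beta,[-M,M]} = \|f - f(-M)\|_{\beta,[-M,M]}$. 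Therefore it suffices to control the law of $y\mapsto h_t(y)-h_t(-M)$ as an element of $\mathcal{C}_0([-M,M];\R)$.

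Next, by Theorem \ref{thm: Absolute continuity KPZ} applied with $y_1=-M$, $y_2=M$, the law $\eta$ of $y\mapsto h_t(y)-h_t(-M)$ on $\mathcal{C}_0([-M,M];\R)$ is absolutely continuous with respect to the law $\mu$ of a Brownian motion starting from $(-M,0)$ with diffusion parameter $\sigma^2 = 2$ restricted to $[-M,M]$. By the classical Kolmogorov--Chentsov continuity theorem applied to Brownian motion, for any $\beta\in(0,1/2)$ the event
\[
E_{\beta} = \Bigl\{f\in\mathcal{C}_0([-M,M];\R): \|f\|_{\beta,[-M,M]} = \infty\Bigr\}
\]
is $\mu$-null, i.e. $\mu(E_{\beta})=0$. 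By absolute continuity $\eta\lll\mu$, we deduce $\eta(E_{\beta})=0$, so almost surely $\|h_t\|_{\beta,[-M,M]} = \|h_t-h_t(-M)\|_{\beta,[-M,M]}<\infty$.

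Finally, the almost-sure finiteness of the non-negative random variable $X\coloneqq\|h_t\|_{\beta,[-M,M]}$ is equivalent to tightness of its law on $[0,\infty)$, and hence, by continuity of probability from above applied to the decreasing family of events $\{X\geq A\}$ as $A\to\infty$,
\[
\lim_{A\to\infty}\PP\bigl(\|h_t\|_{\beta,[-M,M]}\geq A\bigr) = \PP\bigl(\|h_t\|_{\beta,[-M,M]} = \infty\bigr) = 0,
\]
which is precisely \eqref{eq: Holder reg}. The proof has essentially no obstacle beyond invoking the two inputs; the only minor care needed is the measurability of the H\"older seminorm (handled by the rational-point representation) and the verification that the seminorm descends to the quotient by constants so that we can work directly in $\mathcal{C}_0([-M,M];\R)$ where the absolute continuity lives.
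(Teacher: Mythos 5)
Your proposal is correct and follows essentially the same route as the paper: both exploit translation invariance of the H\"older seminorm to reduce to $h_t(\cdot)-h_t(-M)$, invoke Theorem \ref{thm: Absolute continuity KPZ} for absolute continuity with respect to Brownian motion, transfer the almost-sure H\"older $\beta$-regularity of Brownian motion across that absolute continuity, and conclude via continuity of probability. The only cosmetic difference is that the paper cites a local H\"older statement from M\"orters--Peres plus a compactness argument rather than Kolmogorov--Chentsov directly, and phrases the final limit as dominated convergence rather than continuity from above.
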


\begin{proof}
    Fix $\beta\in (0,\frac{1}{2})$ and $M<\infty$. Observe that
    \[
    \lVert h(t)\rVert_{\beta,\space [-M,M]} \coloneqq 
    \sup_{\substack{x\neq y,\\x,y\in [-M,M]}}\frac{|h_t(x)-h_t(y)|}{|x-y|^\beta}
    \]
    \[
    = \sup_{\substack{x\neq y,\\x,y\in [-M,M]}}\frac{|(h_t(x)-h_t(-M))-(h_t(y)-h_t(-M))|}{|x-y|^\beta} = \lVert h_t(\cdot)-h_t(-M)\rVert_{\beta,\space [-M,M]}\,.
    \]
    Since $h_t$ is Brownian on compacts, it follows that the law of $h_y(\cdot)-h_t(-M)$ restricted to $[-M,M]$ is absolutely continuous with respect to the law of a Brownian motion $B$, $\mu$ starting from $(-M,0)$ with diffusion parameter $2$ on $[-M,M]$. Note that after possibly augmenting the probability space, we can take $B$ to live in the same probability space as $h_t(\cdot)$.\\

    Now, Corollary $1.20$ in \cite{morters2010brownian} implies that $B$ is almost surely locally H\"{o}lder $\beta$ continuous. More precisely,
    almost surely, for all $x\in[-M,M]$, there exist random $\epsilon_x, C_x >0$ such that
    \[
    |B(y)-B(x)|\leq C_x |x-y|^\beta, \quad \text{for all } y\in[-M,M] \text{ with } |x-y|\leq \epsilon_x\,.
    \]
    \noindent Covering $[-M, M]$ by intervals $[x-\epsilon_x, x+\epsilon_x]$ invoking compactness and passing to a finite subcover yields that
    $\lVert B(\cdot)\rVert_{\beta,\space [-M,M]} < \infty$ almost surely. This in conjunction with the aforementioned absolute continuity yields that
    \[
    \lVert h_t(\cdot)-h_t(-M)\rVert_{\beta,\space [-M,M]}<\infty \quad \text{ almost surely}\,,
    \]
    which implies \ref{eq: Holder reg} by dominated convergence, as required.
    
\end{proof}

\begin{remark}
    Note that this spatial regularity result obtained for the KPZ fixed point was also shown to hold in \cite{matetski2021kpz}, using exact formulas. However, this relatively simple deduction using standard properties of Brownian motion shows the strength of the improved regularity which is enjoyed by the KPZ fixed point, namely being Brownian on compacts.
\end{remark}

\bibliographystyle{plain} 
\bibliography{refs} 

\begin{thebibliography}{10}

\bibitem{billingsley2013convergence}
Patrick Billingsley.
\newblock {\em Convergence of probability measures}.
\newblock John Wiley \& Sons, 2013.

\bibitem{calvert2019brownian}
Jacob Calvert, Alan Hammond, and Milind Hegde.
\newblock Brownian structure in the kpz fixed point.
\newblock {\em arXiv preprint arXiv:1912.00992}, 2019.

\bibitem{corwin2011kardarparisizhangreview}
Ivan Corwin.
\newblock The kardar-parisi-zhang equation and universality class, 2011.

\bibitem{corwin2016kardar}
Ivan Corwin.
\newblock Kardar-parisi-zhang universality.
\newblock {\em Notices of the AMS}, 63(3):230--239, 2016.

\bibitem{corwin2014brownian}
Ivan Corwin and Alan Hammond.
\newblock Brownian gibbs property for airy line ensembles.
\newblock {\em Inventiones mathematicae}, 195:441--508, 2014.

\bibitem{DOV}
Duncan Dauvergne, Janosch Ortmann, and B{\'a}lint Vir{\'a}g.
\newblock The directed landscape.
\newblock {\em arXiv preprint arXiv:1812.00309}, 2018.

\bibitem{dauvergne2022three}
Duncan Dauvergne, Sourav Sarkar, and B{\'a}lint Vir{\'a}g.
\newblock Three-halves variation of geodesics in the directed landscape.
\newblock {\em The Annals of Probability}, 50(5):1947--1985, 2022.

\bibitem{hairer2014theory}
Martin Hairer.
\newblock A theory of regularity structures.
\newblock {\em Inventiones mathematicae}, 198(2):269--504, 2014.

\bibitem{hammond2019patchwork}
Alan Hammond.
\newblock A patchwork quilt sewn from brownian fabric: regularity of polymer weight profiles in brownian last passage percolation.
\newblock In {\em Forum of Mathematics, Pi}, volume~7, page~e2. Cambridge University Press, 2019.

\bibitem{hammond2021brownian}
Alan Hammond.
\newblock Brownian regularity for the airy line ensemble, and multi-polymer watermelons in brownian last passage percolation, 2021.

\bibitem{johansson2000shape}
Kurt Johansson.
\newblock Shape fluctuations and random matrices.
\newblock {\em Communications in mathematical physics}, 209:437--476, 2000.

\bibitem{kallenberg1997foundations}
Olav Kallenberg and Olav Kallenberg.
\newblock {\em Foundations of modern probability}, volume~2.
\newblock Springer, 1997.

\bibitem{kardar1986dynamic}
Mehran Kardar, Giorgio Parisi, and Yi-Cheng Zhang.
\newblock Dynamic scaling of growing interfaces.
\newblock {\em Physical Review Letters}, 56(9):889, 1986.

\bibitem{matetski2021kpz}
Konstantin Matetski, Jeremy Quastel, and Daniel Remenik.
\newblock The kpz fixed point.
\newblock {\em Acta Mathematica}, 227(1):115--203, 2021.

\bibitem{mitov2014renewal}
Kosto~V Mitov, Edward Omey, Kosto~V Mitov, and Edward Omey.
\newblock {\em Renewal processes}.
\newblock Springer, 2014.

\bibitem{morters2010brownian}
Peter M{\"o}rters and Yuval Peres.
\newblock {\em Brownian motion}, volume~30.
\newblock Cambridge University Press, 2010.

\bibitem{nica2020one}
Mihai Nica, Jeremy Quastel, and Daniel Remenik.
\newblock One-sided reflected brownian motions and the kpz fixed point.
\newblock In {\em Forum of Mathematics, Sigma}, volume~8, page e63. Cambridge University Press, 2020.

\bibitem{o2002representation}
Neil O'Connell and Marc Yor.
\newblock A representation for non-colliding random walks.
\newblock 2002.

\bibitem{pr2018local}
Leandro PR~Pimentel.
\newblock Local behaviour of airy processes.
\newblock {\em Journal of Statistical Physics}, 173(6):1614--1638, 2018.

\bibitem{quastel2011introduction}
Jeremy Quastel.
\newblock Introduction to kpz.
\newblock {\em Current developments in mathematics}, 2011(1), 2011.

\bibitem{rost1981non}
Hermann Rost.
\newblock Non-equilibrium behaviour of a many particle process: Density profile and local equilibria.
\newblock {\em Zeitschrift f{\"u}r Wahrscheinlichkeitstheorie und Verwandte Gebiete}, 58(1):41--53, 1981.

\bibitem{sarkar2021brownian}
Sourav Sarkar and B{\'a}lint Vir{\'a}g.
\newblock Brownian absolute continuity of the kpz fixed point with arbitrary initial condition.
\newblock 2021.

\bibitem{schilling2021brownian}
Ren{\'e}~L Schilling.
\newblock {\em Brownian motion: a guide to random processes and stochastic calculus}.
\newblock Walter de Gruyter GmbH \& Co KG, 2021.

\bibitem{streit1994introduction}
Ludwig Streit.
\newblock An introduction to white noise analysis.
\newblock In {\em Stochastic analysis and applications in physics}, pages 415--439. Springer, 1994.

\end{thebibliography}
\end{document}